\newcommand \sge[1] {{\color{blue}#1}}
\newcommand \hme[1] {{\color{blue}#1}}
\newcommand{\iter}[2]{\ensuremath{#1^{(#2)}}}
\newtheorem{theorem}{Theorem}
\newtheorem{lemma}{Lemma}
\algnewcommand\INPUT{\item[{\textbf{Input:}}]}
\algnewcommand\OUTPUT{\item[{\textbf{Output:}}]}
\algnewcommand\RETURN{\item[{\textbf{Return:}}]}
\DeclareMathOperator*{\argmin}{arg\,min}
\DeclareMathOperator*{\argmax}{arg\,max}
\newcommand{\innerprod}[2]{\left \langle #1,  #2 \right \rangle}
\definecolor{blue}{rgb}{0,0,0}
\title{Walking in the Shadow: A New Perspective on Descent Directions for Constrained Minimization \vspace{15 pt}}
\author[1]{Hassan Mortagy}
\author[1] {Swati Gupta}
\author[2] {Sebastian Pokutta}
\affil[1]{\small Georgia Institute of Technology \protect \\
{\small \tt \{hmortagy,swatig\}@gatech.edu}}
\affil[2]{\small Zuse Institute Berlin and Technische Universit\"{a}t Berlin \protect \\
 \texttt{pokutta@zib.de}}
\date{}
	\definecolor{yaleblue}{rgb}{0.06, 0.3, 0.57}
\definecolor{blue}{rgb}{0,0,0}
\definecolor{teal}{rgb}{0,0,0}
\begin{document}

\maketitle

\begin{abstract}
Descent directions such as movement towards Descent directions, including movement towards Frank-Wolfe vertices, away-steps, in-face away-steps and pairwise directions, have been an important design consideration in conditional gradient descent (CGD) variants. In this work, we attempt to demystify the impact of the movement in these directions towards attaining constrained minimizers. The optimal local direction of descent is the directional derivative (i.e., shadow) of the projection of the negative gradient. We show that this direction is the best away-step possible, and the continuous-time dynamics of moving in the shadow is equivalent to the dynamics of projected gradient descent (PGD), although it's non-trivial to discretize. We also show that Frank-Wolfe (FW) vertices correspond to projecting onto the polytope using an “infinite” step in the direction of the negative gradient, thus providing a new perspective on these steps. We combine these insights into a novel Shadow-CG method that uses FW and shadow steps, while enjoying linear convergence, with a rate that depends on the number of breakpoints in its projection curve, rather than the pyramidal width. We provide a linear bound on the number of breakpoints for simple polytopes and present scaling-invariant upper bounds for general polytopes based on the number of facets. We exemplify the benefit of using Shadow-CG computationally for various applications, while raising an open question about tightening the bound on the number of breakpoints for general polytopes.
\end{abstract}

\section{Introduction} \label{sec: intro}
We consider the \sge{optimization} problem $\min_{{x} \in \mathcal{P}} h({x})$, where $\mathcal{P} \subseteq \mathbb{R}^n$ is a polytope, and $h: \mathcal{P} \to \mathbb{R}$ is a differentiable, smooth, and strongly convex function. Smooth convex optimization problems over polytopes \sge{represent} an important class of \sge{optimization} problems \sge{encountered in various} settings, such as low-rank matrix completion \cite{Grigas_2015}, structured supervised learning \cite{Jaggi2013,Bashiri_2017}, electrical flows over graphs \cite{lyons2005probability}, video co-localization in computer vision \cite{Joulin2014}, traffic assignment problems \cite{Networkflows1993}, and submodular function minimization \cite{Fujishige_2009}. First-order methods in convex optimization rely on movement in the best local direction for descent (e.g., negative gradient), and this is enough to obtain linear convergence for unconstrained optimization. \sge{However, in} constrained settings, the gradient may no longer be a feasible direction of descent, and there are two broad classes of methods traditionally: \sge{(i) projection-based methods that} move in the direction of the negative gradient and then project to ensure feasibility, and \sge{(ii)} conditional gradient methods that move in feasible directions \sge{which} approximate the gradient. Projection-based methods such as projected gradient descent or mirror descent \cite{Nemirovski1983} enjoy dimension independent linear rates of convergence (assuming no acceleration), for instance, $(1-\frac{\mu}{L})$ contraction in the objective per iteration (so that the number of iterations to get an $\epsilon$-accurate solution is $O(\frac{L}{\mu} \log \frac{1}{\epsilon})$), for $\mu$-strongly convex and $L$-smooth functions, but need to compute an expensive projection step (another constrained convex optimization) in (almost) every iteration. On the other hand, conditional gradient methods (such as the Frank-Wolfe algorithm \cite{FrankWolfe1956}) {solely rely on solving} linear optimization (LO) problems in every iteration and the rates of convergence become dimension-dependent, e.g., the away-step Frank-Wolfe algorithm has a linear rate of $(1-\frac{\mu \rho^2}{LD^2})$, where $\rho$ is a polytope dependent geometric constant and $D$ is the diameter of the polytope \cite{Lacoste2015}.  



The standard Conditional Gradient method (CG) or the Frank-Wolfe algorithm (FW) \cite{FrankWolfe1956,Levitin_1966} has received a lot of interest from the ML community mainly because of its iteration complexity, tractability and sparsity of iterates. In each iteration, the CG algorithm computes the {\it Frank-Wolfe vertex} $\iter{v}{t}$ with respect to the current iterate and moves towards that vertex: 
\begin{align*}
    \iter{v}{t} &= \argmin_{{v} \in \mathcal{P}} \innerprod{\nabla h (\iter{x}{t})}{{v}},~~ \iter{x}{t+1} = \iter{x}{t} + \gamma_t (\iter{v}{t} - \iter{x}{t}), \quad \gamma_t\in[0,1]. 
\end{align*} 
CG's primary direction of descent is $\iter{v}{t} - \iter{x}{t}$ ($d_t^{\text{FW}}$ in Figure \ref{fig:proj}) and its step-size $\gamma_t$ can be selected, e.g., using line-search; this ensures feasibility of ${x}_{t+1}$. 
The FW algorithm however, can only guarantee a sub-linear rate of {convergence} $O(1/t)$ for smooth and strongly convex optimization on a {polytope} \cite{FrankWolfe1956,Jaggi2013}. Moreover, this {convergence} rate is tight \cite{Canon_1968,lan_2013}. An active area of research, therefore, has been to find other descent directions that can enable linear convergence.
One reason for the standard CG's $O(1/t)$ rate is the fact that the algorithm might zig-zag as it approaches the optimal face, slowing down progress \cite{Lacoste2015,Canon_1968}. The key idea for obtaining linear convergence was to use the so-called {\it away-steps} that help push iterates to the optimal face: 
\begin{align}
    \iter{a}{t} &= \argmax_{{{v}} \in F} \innerprod{\nabla h(\iter{x}{t})}{{{v}}}, \text{ for } F \subseteq \mathcal{P}, \nonumber \\
    \iter{x}{t+1} &= \iter{x}{t} + \gamma_t (\iter{x}{t}-\iter{a}{t}), \text{ where } \gamma_t \in \mathbb{R}_+ \text{ such that } \iter{x}{t+1}\in \mathcal{P} \label{activeset},
\end{align}
thus, augmenting the potential directions of descent using directions of the form $\iter{x}{t} -\iter{a}{t}$, for some $\iter{a}{t} \in F$, where the precise choice of $F$ in \eqref{activeset} has evolved in CG variants. 

\begin{figure}[t]
    \vspace{-10 pt}
    \centering
    \includegraphics[scale = 0.5]{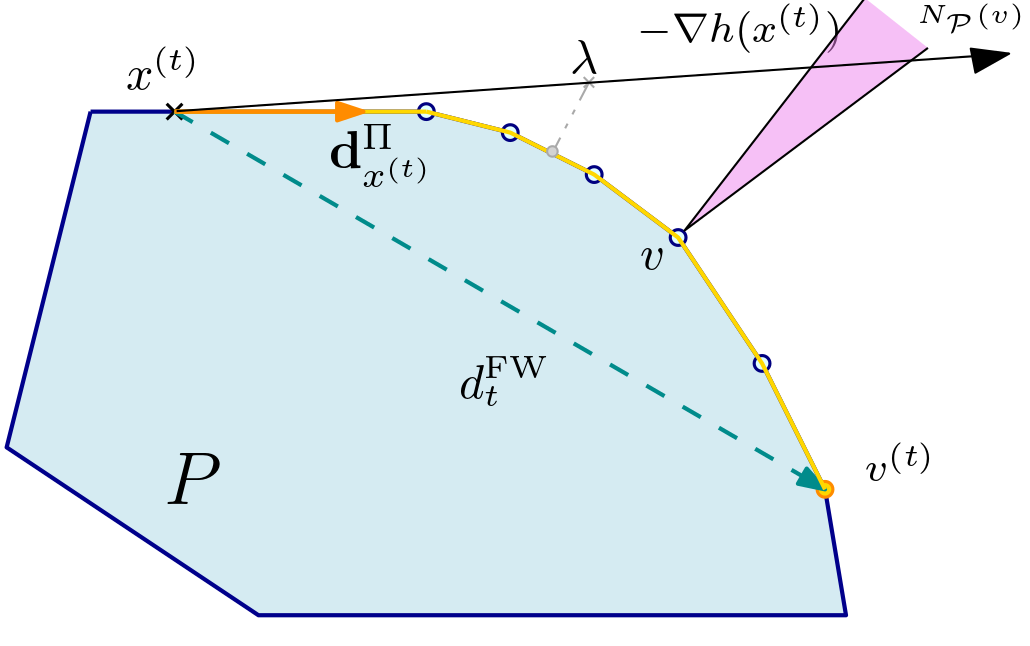}
    \hspace{20 pt}
    \includegraphics[scale = 0.53]{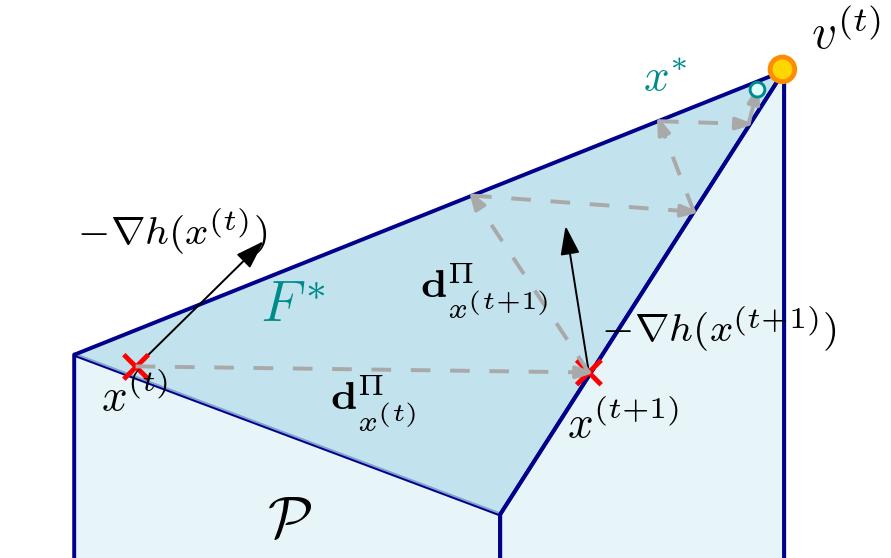}
    \caption{ {Left:} Piecewise linearity of the parametric projections curve $g(\lambda) = \Pi_{\mathcal{P}}(\iter{x}{t} - \lambda \nabla h(\iter{x}{t}))$ (yellow line). The end point is the FW vertex $\iter{v}{t}$ and $d_t^{\text{FW}}$ is the FW direction. Note that $g(\lambda)$ does not change at the same speed as $\lambda$, e.g.,  $g(\lambda)={v}$ for each $\lambda$ such that $\iter{x}{t} - \lambda \nabla h(\iter{x}{t}) - {v}  \in N_{\mathcal{P}}({v})$ (purple normal cone). {Right:} Moving along the shadow as defined in \eqref{shadow def} might lead to arbitrarily small progress even once we reach the optimal face $F^* \ni x^*$. On the contrary, the away-step FW algorithm does not leave $F^*$ after a polytope-dependent iteration \cite{Marcotte1986}.}
    \label{fig:proj}
    \vspace{-10 pt}
\end{figure}

\subsection{Related Work and Key Open Question:}

As early as 1986, Gu{\'e}lat and Marcotte showed that by adding away-steps (with $F=$ minimal face of the current iterate\footnote{The minimal face $F$ with respect to $\iter{x}{t}$ is a face of the polytope that contains $\iter{x}{t}$ in its relative interior, i.e., all active constraints at $\iter{x}{t}$ are tight.}) to vanilla CG, their algorithm has an asymptotic linear convergence rate \cite{Marcotte1986}. 
In 2015, Lacoste-Julien and Jaggi \cite{Lacoste2015} showed linear convergence results for CG with away-steps\footnote{To the best of our knowledge, Garber and Hazan \cite{Garber_2013} were the first to present a CG variant with global linear convergence for polytopes.} (over $F=$ {convex hull of} the current active set, i.e., a specific convex decomposition of the current iterate). They also showed a linear rate {of convergence} for CG with pairwise-steps (of the form $\iter{v}{t} - \iter{a}{t}$), another direction of descent. In 2015, Freund et. al \cite{Grigas_2015} showed an $O(1/t)$ convergence for convex functions, with $F$ as the minimal face of the current iterate. In 2016, Garber and Meshi \cite{Garber_2016} 
showed that pairwise-steps (over 0/1 polytopes) with respect to non-zero components of the gradient are enough for linear convergence, i.e., they also set $F$ to be the minimal face with respect to $\iter{x}{t}$. In 2017, Bashiri and Zhang \cite{Bashiri_2017} generalized this result to show linear convergence for the same $F$ for general polytopes (at the cost of two expensive oracles however). 

Other related research includes linear convergence for the conditional gradient method over strongly-convex domains with a lower bound on the norm of the optimum gradient \cite{Levitin_1966}, or when the FW-vertex is constrained to a ball around the iterate \cite{lan_2013,Garber_2016}, bringing in regularization-like ideas of mirror-descent variants {to} CG. There has also been extensive work on mixing Frank-Wolfe and gradient descent steps \cite{Braun2018} or solving projections approximately using Frank-Wolfe steps \cite{lan_2016} (with the aim of better computational performance) while enjoying linear convergence \cite{lan_2016,Braun2018}. \sge{Our objective in this study is to contextualize these CG variants and elucidate the properties of various feasible descent directions.} 

{Although all these variants obtain linear convergence, their rates depend on polytope-dependent geometric, affine-variant constants (that can be arbitrarily small for non-polyhedral sets like the $\ell_2$-ball) such as the pyramidal-width \cite{Lacoste2015}, vertex-facet distance \cite{Beck2017}, or sparsity-dependent constants \cite{Bashiri_2017}, which have been shown to be essentially equivalent \cite{Pena2015}. The iterates in {CG algorithms} are (basically) affine-invariant; this underpins the inevitability of a dimension-dependent factor in current discussions. Following our work, there have been recent results on extensions using procedures similar to our {\sc Trace-Opt} {procedure} to avoid ``bad'' steps in  CG variants and obtain linear convergence rates that depend on a slope condition rather than geometric constants 
\cite{rinaldi2020unifying,rinaldi2020avoiding}, and using shadow directions to speed up FW algorithms \cite{kolmogorov2020practical}.

\paragraph{Key Open Question:} A natural question at this point is why are these different descent directions useful and which of these are necessary for linear convergence. If one had oracle access to the ``best'' local direction of descent for constrained minimization, what would it be and is it enough to {obtain} linear convergence like in unconstrained optimization? Additionally, can we circumvent convergence rates contingent upon the polytope's geometry, \sge{e.g., using affine-variant constants like pyramidal width?} We partially answer these questions below.

\subsection{Contributions}
We show that the ``best'' local feasible direction of descent, giving the maximum function value decrease in the diminishing neighborhood of the current iterate $\iter{x}{t}$, is the {\it directional derivative} ${d}_{\iter{x}{t}}^\Pi$ of the projection of the gradient, which we call the \emph{shadow} of the gradient:  
\begin{equation} \label{shadow def}
{d}_{\iter{x}{t}}^\Pi \coloneqq \lim_{\epsilon \downarrow 0} \frac{\Pi_{\mathcal{P}}(\iter{x}{t} - \epsilon \nabla h(\iter{x}{t})) - \iter{x}{t}}{\epsilon},    
\end{equation}
where $\Pi_{\mathcal{P}}(y) = \argmin_{x\in \mathcal{P}}\|x-y\|^2$ is the Euclidean projection operator. Further, a continuous time dynamical system can be defined using infinitesimal movement in the shadow direction at the current point: 
$\dot{X}(t) = {d}_{X(t)}^\Pi,$ with $X(0) = \iter{x}{0} \in \mathcal{P}$. We show that this ODE is equivalent to that of projected gradient descent (Theorem \ref{ode theorem}), but is non-trivial to discretize due to non-differentiability of the curve.

Second, we explore structural properties of shadow steps. For any ${x} \in \mathcal{P}$, we characterize the curve $g(\lambda) = \Pi_{\mathcal{P}}({x} - \lambda \nabla h({x}))$ as a piecewise linear curve, where the breakpoints of the curve typically occur at points where there is a change in the normal cone (Theorem \ref{elephant}) and show how to compute this curve for all $\lambda\geq0$ (Theorem \ref{invariance}). {\color{blue} The projections curve is piecewise linear, as can be shown using parametric complementary pivot theory \cite{murty1988linear,dantzig1968complementary}. Existing pivoting algorithms often inefficiently search for a basic feasible solution (BFS) of the KKT conditions at each breakpoint to find the next linear segment. In contrast, \sge{we characterize the entire projections curve by showing that it consists of two directions: the shadow of the negative gradient and its in-face shadow.} We show that these directions can be computed in $O(n)$ time for the hypercube (Equation \eqref{hypercube shadow eqn}) and in $O(n^2)$ for the simplex (Lemma \ref{simplex shadow}), \sge{which subsequently allows us to compute}} the entire projections curve for the hypercube and simplex in $O(n^2)$ and $O(n^3)$ time respectively.}\\

\sge{Additionally}, we show the following properties for descent directions:

\noindent
{\bf (i) Shadow Steps ($d_{\iter{x}{t}}^\Pi$)}: These represent the best  \emph{normalized feasible directions} of descent (Lemma \ref{best local direction}). Moreover, we show that $\|{d}_{\iter{x}{t}} ^\Pi\| = 0$ is a true measure of primal optimality gaps (in constrained optimization) without any dependence on geometric constants like those used in other CG variants (Lemma \ref{primal gap optimality}): (i) $\|{d}_{\iter{x}{t}} ^\Pi\| = 0$ if and only if $\iter{x}{t} = \arg\min_{x\in \mathcal{P}}h(x)$ ; (ii) for $\mu$-strongly convex functions, we show that $\|{d}_{\iter{x}{t}} ^\Pi\|_2^2 \geq 2\mu(h(\iter{x}{t}) - h(x^*))$, which generalizes the well known PL inequality \cite{lojasiewicz1963topological}. We show that multiple shadow steps approximate a single projected gradient descent step (Theorem \ref{invariance}). The rate of linear convergence using shadow steps is dependent on the number of facets (independent of geometric constants but dimension dependent due to the number of facets), and {\it{interpolate} smoothly} between projected gradient and conditional gradient methods (Theorem \ref{alg converg1}). \\

\noindent 
{\bf (ii) FW-Steps ($\iter{v}{t} - \iter{x}{t}$)}: Projected gradient steps provide a contraction in the objective, independent of the geometric constants or the polytope's facets; \sge{they correspond to the maximum descent (in terms of distance between points) that one can obtain on the polytope, by taking unconstrained gradient steps and then projecting back to the polytope}. Under mild technical conditions (uniqueness of $\iter{v}{t}$), we show that the Frank-Wolfe vertices are in fact the projection of an infinite descent in the negative gradient direction (Theorem \ref{FW limit}). This \sge{enables CG methods to greedily descend on the polytope maximally}, compared to PGD methods, thereby providing a fresh perspective on FW-steps.\\

\noindent 
{\bf (iii) Away-Steps ($\iter{x}{t} - \iter{a}{t}$)}: Shadow steps are the {\it best normalized away-direction} with steepest local descent (Lemma \ref{best local direction}). Shadow steps are in general convex combinations of potential active vertices minus the current iterate (Lemma \ref{lemma away-shadow}) and therefore loose combinatorial properties such as dimension drop in active sets \cite{Bashiri_2017}. \sge{Shadow steps can visit the same face multiple times in a zig-zagging manner, see Figure \ref{fig:proj} (right) for an example}, unlike away-steps that use vertices and have a monotone decrease in dimension when they are consecutive. 
\\

\noindent 
{\bf (vi) Pairwise Steps ($\iter{v}{t} - \iter{a}{t}$)}: The progress in CG variants is bounded crucially using the inner product of the descent direction with the negative gradient. In this sense, pairwise steps are simply the {{\it sum of the FW-step and ``away'' directions}}, and a simple algorithm, the pairwise-FW algorithm, that only uses these steps does converge linearly {(with geometric constants)} \cite{Lacoste2015,Bashiri_2017}. Moreover, for feasibility of the descent direction, one requires $\iter{a}{t}$ to be in the active set of the current iterate (shown in Lemma \ref{best away vertex}).\\

Armed with these structural properties, we consider a descent algorithm {\sc Shadow-Walk}. \sge{It traces} the projections curve by moving in the shadow or an in-face directional derivative with respect to a fixed iterate, until sufficient progress \sge{is achieved, and then updates} the shadow based on the current iterate. Using properties of normal cones, we can show that once the projections curve at a fixed iterate leaves a face, it can never visit the face again (Theorem \ref{breakpoints}). We are thus able to break a single PGD step into multiple descent steps, and show linear convergence with rate dependent on the number of facets, but independent of geometric constants like the pyramidal-width. 

We combine these insights into a novel {\sc Shadow-CG} method which uses FW-steps and shadow steps (both over the tangent cone and minimal face), while enjoying linear convergence. This method prioritizes FW-steps that achieve maximal ``coarse'' progress in earlier iterations and shadow steps avoid zig-zagging in the latter iterations. Garber and Meshi \cite{Garber_2016} and Bashiri and Zhang \cite{Bashiri_2017} both compute the best away vertex in the minimal face containing the current iterate, whereas the shadow step recovers the best convex combination of such vertices aligned with the negative gradient. Therefore, these previously mentioned CG methods can {\it both} be viewed as approximations of {\sc Shadow-CG}. Moreover, Garber and Hazan \cite{Garber_2013} emulate a shadow computation by constraining the FW vertex to a ball around the current iterate. Therefore, their algorithm can be interpreted as an approximation of {\sc Shadow-Walk}. {\color{blue} We further show that {\sc Shadow-Walk} and {\sc Shadow-CG} achieve a factor of $\Omega(n)$ reduction in iteration complexity over the simplex, and an overall $\Omega(n^2)$ reduction in running time over the hypercube, compared to AFW.} Finally, we propose a practical variant of {\sc Shadow-CG}, called {\sc Shadow-CG$^2$}, which reduces the number of shadow computations. 

\paragraph{Outline.} We next review preliminaries in Section \ref{sec: prelims}. In Section \ref{sec:structure}, we derive theoretical properties of the piecewise-linear projections curve \hme{and prove that the number of breakpoints of the projections curve $g(\lambda)$ is $O(n)$ for both the simplex and the hypercube}. Next, we derive properties of descent directions in Section \ref{sec:descentdirections} and present the continuous time dynamics for movement along the shadow and its discretization {\sc Shadow-Walk} in Section \ref{sec:appODE}. We propose {\sc Shadow-CG} and {\sc Shadow-CG$^2$} in Section \ref{sec:shadowCG}, and benchmark our algorithms against AFW. Finally, we provide computational experiments in Section \ref{sec: computations - main body}.

\section{Preliminaries}\label{sec: prelims}
Let $\|\cdot\|$ denote the Euclidean norm and let $\mathcal{P}\subseteq \mathbb{R}^n$ denote a polytope defined in the form
\begin{equation} \label{polytope}
    \mathcal{P} = \{x \in \mathbb{R}^n: \innerprod{{a}_i}{x} \leq b_i \; \forall \; i \in [m]\}.
\end{equation}
We use $\mathrm{vert}(\mathcal{P})$ to denote the vertex set of $\mathcal{P}$. 
A function $h: \mathcal{P} \to \mathbb{R}$ is said to be \emph{$L-${smooth}} if $h({y}) \leq h({x}) + \innerprod {\nabla h({x})}{{y}-{x}} + \frac{L}{2} \|{y}-{x}\|^2$ for all ${x},{y} \in \mathcal{P}$. Furthermore, $h: \mathcal{P} \to \mathbb{R}$ is said to be \emph{$\mu-$strongly-convex} if $h({y}) \geq h({x}) + \innerprod {\nabla h({x})}{{y}-{x}} + \frac{\mu}{2} \|{y}-{x}\|^2$ for all ${x},{y} \in \mathcal{P}$. Let $D \coloneqq \sup_{{x},{y} \in \mathcal{P}} \|{x}- {y}\|$ be \emph{the diameter of $\mathcal{P}$}. We let ${x}^* = \arg\min_{x\in \mathcal{P}} h(x)$, where uniqueness follows from the strong convexity of the $h$. For any ${x} \in \mathcal{P}$, we let $I({x}) = \{i \in [m]: \innerprod{{a}_i}{{x}} =  b_i\}$ be the index set of active constraints at ${x}$. Similarly, let $J({x})$ be the index set of inactive constraints at ${x}$. Denote by $A_{I(x)} = [{a}_i]_{i \in I(x)}$ the sub-matrix of active constraints at $x$ and {${b}_{I(x)} = [b_i]_{i \in I(x)}$ the corresponding right-hand side}. The \emph{normal cone at a point ${x} \in \mathcal{P}$} is defined as 
\begin{equation} \label{def: normal cone}
\begin{aligned}
   N_{\mathcal{P}}({x}) &\coloneqq \{{y} \in \mathbb{R}^n : \innerprod{{y}}{{z} - {x}} \leq 0\; \forall {z} \in \mathcal{P}\} = \{{y} \in \mathbb{R}^n : \exists {\mu} : {y} = \mu ^\top A_{I({x})},\; {\mu} \geq 0\},
\end{aligned}
\end{equation}
which is essentially the cone of the normals of constraints tight at ${x}$. The \emph{tangent cone} at a point $x \in \mathcal{P}$ is defined as
\begin{equation}\label{tangent cone}
    T_{\mathcal{P}}(x)\coloneqq \{d \in \mathbb{R}^n: A_{I(x)}d \leq 0 \},
\end{equation}
that is $T_{\mathcal{P}}(x) = \mathrm{cone}(\mathcal{P} - x)$ is the cone of feasible directions in $\mathcal{P}$ from ${x}$.

We next review some results on Euclidean projections over polytopes that we will use in this paper. Let $\Pi_{\mathcal{P}}({y}) = \argmin_{{x} \in \mathcal{P}} \frac{1}{2}\|{x}-{y}\|^2$ be the \emph{Euclidean projection operator}.
Using first-order optimality, we have ${x^*} = \Pi_{\mathcal{P}}( {y})$ if and only if
\begin{equation} \label{first-order optimality of projections}
    \innerprod{{y}-{x}}{{z}-{x}} \leq 0   \quad \forall {z} \in \mathcal{P} \quad \Longleftrightarrow \quad \; ({y}-{x}) \in N_{\mathcal{P}}({x}).
\end{equation}
It is well known that the Euclidean projection operator over convex sets is non-expansive (see e.g., \cite{bertsekas1997}): $\|\Pi_{\mathcal{P}}({y}) - \Pi_{\mathcal{P}}({x})\| \leq \|{y} - {x}\|$ for all ${x},{y} \in \mathbb{R}^n$. Given any point ${x} \in \mathcal{P}$ and ${w}\in \mathbb{R}^n$, let the \emph{directional derivative} of ${w}$ at $x$ be defined as (\sge{note it is the projection of $-w$ direction onto the tangent cone at $x$ for brevity of results}):
\begin{equation}
    {d}_{{x}}^\Pi ({w}) \coloneqq \lim_{\epsilon \downarrow 0} \frac{\Pi_{\mathcal{P}}({x} - \epsilon {w}) - {x}}{\epsilon}.
\end{equation}

When ${w} = \nabla h(x)$, then we call ${d}_x^\Pi(\nabla h(x))$ the \emph{shadow of the negative gradient} at ${x}$, and use notation $d_{x}^{\Pi}$ for brevity. In \cite{Tapia_1972}, Tapia et. al show that ${d}_{{x}}^\Pi$ is the projection of $-\nabla h({x})$ onto the {tangent cone} at $x$: ${d}_{{x}}^\Pi = \argmin_{d \in T_{\mathcal{P}}(x)} \{ \|-\nabla h({x}) - d \|^2\} =  \argmin_{d \in \mathbb{R}^n} \{ \|-\nabla h({x}) - d \|^2 : {A}_{I({x})}{d} \leq {0}\}$, where the uniqueness of the solution follows from strong convexity of the objective. Further, let $\hat{d}^\Pi_x(\nabla h(x))$ be the projection of $-\nabla h({x})$ onto $\mathrm{Cone}(F - x) = \{d \in \mathbb{R}^n: {A}_{I({x})}{d} = {0}\}$, where $F$ is the minimal face of $P$ containing $x$\footnote{Note that $-({z} - x)$ is also a feasible direction at $x$ for any ${z} \in F$, since $x$ is in the relative interior of $F$ by definition. This implies that $\mathrm{Cone}(F - x) = -\mathrm{Cone}(F - x)$, and therefore $\mathrm{Cone}(F - x)$ is in fact a subspace.}. That is, $\hat{d}^\Pi_x(\nabla h(x))$ is the projection of $-\nabla f (x)$ onto the set of in-face feasible directions and can be computed in closed-form using: $\hat{d}^\Pi_x(\nabla h(x)) =  \argmin_{d \in \mathbb{R}^n} \{ \|-\nabla h({x}) - d \|^2 : {A}_{I({x})}{d}= {0}\} = ({I} -{A}_{I({x})}^\dagger {A}_{I({x})}) (-\nabla h(x)) $, where ${I} \in \mathbb{R}^{n \times n}$ is the identity matrix, and ${A}_{I({x})}^\dagger$ is the Moore-Penrose inverse of ${A}_{I({x})}$ (see Section 5.13 in \cite{meyer2000matrix} for more details). We refer to $\hat{d}^\Pi_x(\nabla h(x))$ as the \emph{in-face shadow} or \emph{in-face directional derivative}.

We will also use the following Moreau's decomposition theorem in our analysis:
\begin{theorem}[Moreau's Decomposition theorem \cite{moreau1962}] \label{moreau}
Let $\mathcal{K} \subseteq \mathbb{R}^n$ be a closed convex cone and let $\mathcal{K}^\circ$ be its polar cone,  that is, the closed convex cone defined by $\mathcal{K}^\circ=\{{y} \in \mathbb{R}^n \,\mid\, \innerprod{x}{y} \leq 0,\,\forall x \in\mathcal{K}\}$. Then, for $x,y, z \in \mathbb{R}^n$, the following statements are equivalent: 
\begin{itemize}
    \item [(i)]  $z=x+y$, $x \in \mathcal{K}$, $y \in \mathcal{K}^\circ$, and $\innerprod{x}{y}=0$; 
    \item [(ii)] $x = \Pi_{\mathcal{K}}(z)$ and $y = \Pi_{\mathcal{K}^\circ}(z)$.
\end{itemize}
\end{theorem} 

\section{Structure of the Parametric Projections Curve}\label{sec:structure}
In this section, we characterize properties of the directional derivative at any ${x} \in \mathcal{P}$ and the structure of the parametric projections curve $g_{x, {w}}(\lambda) = \Pi_{\mathcal{P}}({x} - \lambda {w})$, for $\lambda \geq 0$, under Euclidean projections. For brevity, we use $g(\cdot)$ when ${x}$ and ${w}$ are clear from context. The following theorem summarizes our results and {it} is crucial to our analysis of descent directions:

\begin{theorem} [Structure of Parametric Projections Curve]\label{elephant} Let $\mathcal{P} \subseteq \mathbb{R}^n$ be a polytope, with $m$ facet inequalities (e.g., as in \eqref{polytope}). For any $x_0 \in \mathcal{P}, {w} \in \mathbb{R}^n$, let $g(\lambda) = \Pi_{\mathcal{P}}({x}_0 - \lambda {w})$ be the projections curve at $x_0$ with respect to ${w}$ parametrized by $\lambda \in \mathbb{R}_+$. Then, this curve is piecewise linear starting at $x_0$: there exist $k$ breakpoints $x_1, x_2, \hdots, x_k \in \mathcal{P}$, corresponding to projections with $\lambda$ equal to $0= \lambda_0^{-} \leq \lambda_0^{+} < \lambda_1^{-} \leq \lambda_1^+ < \lambda_2^- \leq \lambda_2^+ \hdots < \lambda_k^{-} \leq \lambda_k^+$, where
\begin{enumerate}[leftmargin = 15pt]
    \item[(a)] $\lambda_i^- \coloneqq \min \{\lambda \geq 0~|~ g(\lambda) = x_i\}$, $\lambda_i^+ \coloneqq \max \{\lambda \geq 0 ~|~ g(\lambda) = x_i\}$, for $i\geq 0$,
    \item[(b)] $g(\lambda) = x_{i-1} + \frac{x_{i}-x_{i-1}}{\lambda_i^{-}-\lambda_{i-1}^{+}}(\lambda-\lambda_{i-1}^+)$, for $\lambda \in [\lambda_{i-1}^+, \lambda_i^-]$ for all $i \geq 1$.
\end{enumerate}
Moreover, for each $1 \leq  i \leq k$ and all $\lambda, \lambda^\prime \in (\lambda_{i-1}^+, \lambda_{i}^-)$, the following hold:
\begin{enumerate} [leftmargin = 15pt]
     \item[(i)] {\textbf{\textit{Potentially drop tight constraints on leaving breakpoints:}} $N_{\mathcal{P}}(x_{i-1}) = N_{\mathcal{P}}(g(\lambda_{i-1}^+)) \supseteq N_{\mathcal{P}}(g(\lambda))$. Moreover, if  $\lambda_{i-1}^{-} < \lambda_{i-1}^{+}$, then the containment is strict.}
    \item[(ii)] \textbf{\textit{Constant normal cone between breakpoints:}} $N_{P}(g(\lambda)) = N_{P}(g(\lambda^\prime))$,
    \item[(iii)] \textbf{\textit{Potentially add tight constraints on reaching breakpoints:}} $N_{P}(g(\lambda)) \subseteq N_{P}(g(\lambda_i^-)) = N_{\mathcal{P}}(x_i)$.
\end{enumerate}
Further, the following properties also hold: 
\begin{enumerate}[leftmargin = 15pt]
\item[(iv)] \textbf{\textit{Equivalence of constant normal cones with linearity:}} If $N_{\mathcal{P}}(g(\lambda)) = N_{\mathcal{P}}(g(\lambda^\prime))$ for some $\lambda < \lambda^\prime$, then the curve between $g(\lambda)$ and $g(\lambda^\prime)$ is linear (Lemma \ref{equivalence}).
    \item[(v)] \textbf{\textit{Bound on breakpoints:}} The number of breakpoints of $g(\cdot)$ is at most the number of faces of the polytope (Theorem \ref{breakpoints}). 
    \item[(vi)] \textbf{\textit{Limit of $g(\cdot)$:}} The end point of the curve $g(\lambda)$ is $\lim_{\lambda \rightarrow \infty} g(\lambda) = x_k \in \argmin_{x\in \mathcal{P}} \innerprod{{x}}{{w}}$. In fact, $x_k$ minimizes $\|y-x_0\|$ over $y \in \argmin_{x\in \mathcal{P}} \innerprod{{x}}{{w}}$ (Theorem \ref{FW limit}, Section \ref{sec:descentdirections}).
\end{enumerate}
\end{theorem}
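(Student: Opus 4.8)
The plan is to combine the optimality characterization \eqref{first-order optimality of projections} with the fact that $\mathbb{R}^n$ decomposes into finitely many \emph{convex} regions, one per face of $P$, on each of which $\Pi_P$ is an affine map; the curve $g$ is then the image of a ray under this piecewise-affine map. Concretely, for a face $F$ of $P$ set $\mathcal{R}_F := \{\y \in \mathbb{R}^n : \Pi_P(\y) \in \mathrm{relint}(F)\}$. Since the normal cone is constant on $\mathrm{relint}(F)$ (equal to some $N_F$), the equivalence $\Pi_P(\y) = \x \iff \y - \x \in N_P(\x)$ gives $\mathcal{R}_F = \mathrm{relint}(F) + N_F$, which is convex, and the $\mathcal{R}_F$ partition $\mathbb{R}^n$ since $\y$ determines $\Pi_P(\y)$ and the latter lies in the relative interior of a unique minimal face. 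First I would show $\Pi_P|_{\mathcal{R}_F} = \Pi_{\mathrm{aff}(F)}|_{\mathcal{R}_F}$: if $\x := \Pi_P(\y) \in \mathrm{relint}(F)$ then $\y - \x \in N_P(\x) = \mathrm{cone}\{\ba_i : i \in I(\x)\} \subseteq \mathrm{span}\{\ba_i : i \in I(\x)\}$, and the latter is precisely the normal space at $\x$ of the affine subspace $\mathrm{aff}(F) = \{\x' : \A_{I(\x)}\x' = \mathbf{b}_{I(\x)}\}$ (using that $\x\in\mathrm{relint}(F)$, so its active set cuts out $\mathrm{aff}(F)$); hence $\x = \Pi_{\mathrm{aff}(F)}(\y)$. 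As $\Pi_{\mathrm{aff}(F)}$ is affine, $\lambda \mapsto \Pi_P(\x_0 - \lambda\mathbf{w})$ is affine on $\{\lambda \geq 0 : \x_0 - \lambda\mathbf{w} \in \mathcal{R}_F\}$.

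Next I would assemble the piecewise-linear structure. The ray $\lambda \mapsto \y_\lambda := \x_0 - \lambda\mathbf{w}$ meets each convex $\mathcal{R}_F$ in an interval of $\lambda$'s, there are finitely many faces, and non-expansiveness gives $\|g(\lambda) - g(\lambda')\| \leq \|\mathbf{w}\|\,|\lambda - \lambda'|$, so $g$ is continuous; hence $[0,\infty)$ splits into finitely many intervals on each of which $g$ coincides with a single affine map, glued continuously. (For the sharp count asserted in (v) I would invoke Theorem \ref{breakpoints}; finiteness itself already follows here since the region-intervals are pairwise disjoint, so no face is visited on two separate intervals.) Two constant pieces cannot be consecutive---by continuity $g$ would take the same value on both, forcing them to merge---so after merging I may list the distinct values attained on the non-constant (``moving'') pieces as $\x_0 = g(0), \x_1, \dots, \x_k$, and set $[\lambda_i^-, \lambda_i^+] := \{\lambda \geq 0 : g(\lambda) = \x_i\}$; this is a closed interval, being the intersection of $\{\lambda \geq 0\}$ with the preimage of the convex cone $N_P(\x_i)$ under the affine map $\lambda \mapsto \y_\lambda - \x_i$. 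This gives the ordering $0 = \lambda_0^- \leq \lambda_0^+ < \lambda_1^- \leq \cdots$ and claim (a); claim (b) is the explicit form of the affine function on $[\lambda_{i-1}^+, \lambda_i^-]$ interpolating $g(\lambda_{i-1}^+) = \x_{i-1}$ and $g(\lambda_i^-) = \x_i$, and (vi) is Theorem \ref{FW limit}.

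For (i)--(iii) the key elementary fact is that for $\mathbf{z}$ in the relative interior of a segment $[\ba, \mathbf{b}] \subseteq P$ one has $I(\mathbf{z}) = I(\ba) \cap I(\mathbf{b})$ (a proper convex combination of reals each $\leq b_j$ equals $b_j$ iff both do). Since $g$ maps the open interval $(\lambda_{i-1}^+, \lambda_i^-)$ onto the open segment $(\x_{i-1}, \x_i)$ (it is affine there, with $\x_{i-1} \neq \x_i$), this yields $I(g(\lambda)) = I(\x_{i-1}) \cap I(\x_i)$ for all such $\lambda$, hence constant---claim (ii)---and contained in both $I(\x_{i-1})$ and $I(\x_i)$; since $N_P(\x) = \mathrm{cone}\{\ba_i : i \in I(\x)\}$ is monotone in the active set, this gives $N_P(g(\lambda)) \subseteq N_P(\x_{i-1})$ and $N_P(g(\lambda)) \subseteq N_P(\x_i)$, i.e. (iii) and the inclusion in (i) (the equalities $N_P(g(\lambda_{i-1}^+)) = N_P(\x_{i-1})$, $N_P(g(\lambda_i^-)) = N_P(\x_i)$ being immediate from $g(\lambda_{i-1}^+)=\x_{i-1}$, $g(\lambda_i^-)=\x_i$). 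For the strict containment when $\lambda_{i-1}^- < \lambda_{i-1}^+$: if instead $I(g(\lambda)) = I(\x_{i-1})$ on $(\lambda_{i-1}^+, \lambda_i^-)$, then there $g$ agrees with the affine map $\Pi_{\mathrm{aff}(F_{i-1})}(\y_\lambda)$ (with $F_{i-1}$ the minimal face at $\x_{i-1}$), the same affine map $g$ already equals on the nondegenerate interval $[\lambda_{i-1}^-, \lambda_{i-1}^+]$ where $g \equiv \x_{i-1}$; an affine map constant on a nondegenerate interval is globally constant, forcing $g \equiv \x_{i-1}$ on $[\lambda_{i-1}^-, \lambda_i^-)$ and contradicting $\lambda_{i-1}^+ = \max\{\lambda : g(\lambda) = \x_{i-1}\}$. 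Finally, (iv) and (v) are the statements of Lemma \ref{equivalence} and Theorem \ref{breakpoints}. The part I expect to need the most care is the finiteness / ``no return to a face'' step---justifying that the ray meets each $\mathcal{R}_F$ in a single interval, and that possibly linearly dependent active normals do not spoil the identification $\Pi_P|_{\mathcal{R}_F} = \Pi_{\mathrm{aff}(F)}$; the remaining steps are routine convex-combination and normal-cone bookkeeping.
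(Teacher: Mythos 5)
Your proposal is correct, but it reaches the result by a genuinely different route from the paper. For piecewise linearity, the paper works purely from first-order optimality: Lemma \ref{equivalence} shows that equal normal cones at $g(\lambda)$ and $g(\lambda')$ force linearity by verifying that the convex combination satisfies the optimality condition \eqref{first-order optimality of projections}, and Lemma \ref{continuous and piecewise} then combines non-expansiveness (continuity) with a dyadic-subdivision contradiction against the finiteness of distinct normal cones. You instead invoke the normal-fan decomposition $\mathcal{R}_F = \mathrm{relint}(F)+N_F$, on each cell of which $\Pi_P$ coincides with the affine map $\Pi_{\mathrm{aff}(F)}$, and intersect the ray with these convex cells; this packages piecewise linearity, the ``each face is visited on a single interval'' property (which the paper only gets as a corollary of Lemma \ref{equivalence}), and the breakpoint bound in part (v) into one geometric statement, and it also yields (iv) directly. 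For the normal-cone properties (i)--(iii), the paper's Lemma \ref{movement along linear segments} tracks activity and inactivity of constraints along a feasible direction $\bd$, whereas you use the cleaner observation that a relative-interior point of a segment in $P$ has active set $I(\mathbf{a})\cap I(\mathbf{b})$; both are sound, though the paper's formulation is reused later (Remark 1, Theorem \ref{invariance}), so it earns its keep downstream. The only soft spot is in your strictness argument for (i), where you pass from ``$N_P(g(\lambda))=N_P(\x_{i-1})$'' to ``$I(g(\lambda))=I(\x_{i-1})$'': with redundant inequalities these need not coincide, but equal normal cones do imply the same minimal face and hence the same $\mathrm{aff}(F)$, which is all your affine-rigidity argument actually needs; this deserves one explicit sentence but is not a gap. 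You correctly flag, and correctly resolve, the issue of linearly dependent active normals, since $\mathrm{span}\{\ba_i: i\in I(\x)\}$ is the orthogonal complement of $\ker(\A_{I(\x)})$ regardless of dependence.
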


\begin{figure}[t]
\vspace{-12 pt}
    \centering
    \includegraphics[scale = 0.14]{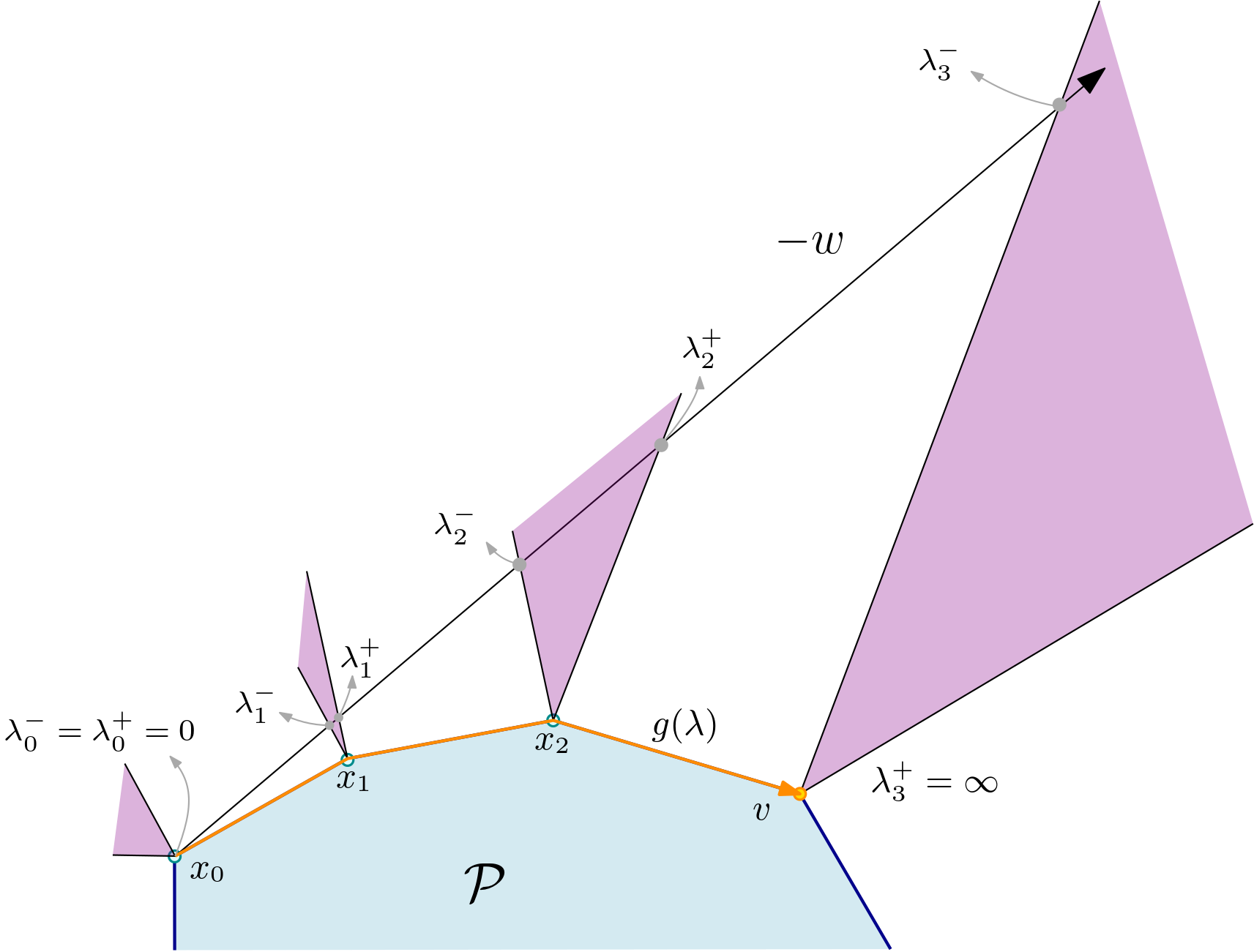}
    \caption{ {Figure showing the structure of the projections curve $g(\lambda) = \Pi_{\mathcal{P}}(x_0 - \lambda {w})$ for $\lambda \geq 0$, which is depicted by the orange line. Breakpoints in the curve correspond to $x_i$ with $g(\lambda_i^-) = g(\lambda_i^+) = x_i$, and $\lambda_3^+ = \infty$ since $\lim_{\lambda \to \infty} g(\lambda) = {v} = \argmin_{y\in \mathcal{P}} \innerprod{y}{{w}}$. Consider the first linear segment from $x_0$ to $x_1$. We have $N_{\mathcal{P}}(g(\lambda)) = N_{\mathcal{P}}(g(\lambda^\prime)) \subset N_{\mathcal{P}}(x_0)$ for all $\lambda, \lambda^\prime \in (0,\lambda_1^{-})$. Then, another constraint becomes tight at the end point of the first segment $x_1$, and thus we have $N_{\mathcal{P}}(g(\lambda)) \subset N_{\mathcal{P}}(x_1)$ for all $\lambda \in (0,\lambda_1^{-})$. This process of dropping and adding constraints (given by Theorem \ref{elephant}$(i)-(iii)$) continues until we reach the endpoint ${v}$.}}
    \label{fig:Normal Cone}
\end{figure}
To see an example of the projections curve, we refer the reader to Figure \ref{fig:Normal Cone}. Even though our results hold for any ${w}\in \mathbb{R}^n$, we will prove the statements for ${w} = \nabla h(x_0)$ for readability in the context of the chapter. Before we present the proof of Theorem \ref{elephant}, we first show that if $- \nabla h(x_0) \in N_{\mathcal{P}}(x_0)$, then $g(\lambda)$ reduces down to a single point $x_0$.
\begin{restatable}{lemma}{basecase}
\label{basecase}%
 $g(\lambda) = \Pi_{\mathcal{P}}(x_0 - \lambda \nabla h(x_0)) = x_0$ for all $\lambda \in \mathbb{R}_+$ if and only if $-\nabla h(x_0) \in N_{\mathcal{P}}(x_0)$.
\end{restatable}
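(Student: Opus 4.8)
The plan is to prove both directions directly from the first-order optimality characterization of Euclidean projections, equation~\eqref{first-order optimality of projections}, namely that $\x = \Pi_P(\y)$ if and only if $\y - \x \in N_P(\x)$.

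For the ($\Leftarrow$) direction, suppose $-\nabla f(\x_0) \in N_P(\x_0)$. Since $N_P(\x_0)$ is a convex cone, $-\lambda \nabla f(\x_0) \in N_P(\x_0)$ for every $\lambda \geq 0$, and hence $(\x_0 - \lambda \nabla f(\x_0)) - \x_0 = -\lambda \nabla f(\x_0) \in N_P(\x_0)$. By \eqref{first-order optimality of projections} this says exactly that $\x_0 = \Pi_P(\x_0 - \lambda \nabla f(\x_0)) = g(\lambda)$, which holds for all $\lambda \in \mathbb{R}_+$ as claimed (the case $\lambda = 0$ being trivial since $\x_0 \in P$).

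For the ($\Rightarrow$) direction, suppose $g(\lambda) = \x_0$ for all $\lambda \in \mathbb{R}_+$. Fix any $\lambda > 0$. Then $\Pi_P(\x_0 - \lambda \nabla f(\x_0)) = \x_0$, so by \eqref{first-order optimality of projections} we get $(\x_0 - \lambda \nabla f(\x_0)) - \x_0 = -\lambda \nabla f(\x_0) \in N_P(\x_0)$. Dividing by $\lambda > 0$ and using that $N_P(\x_0)$ is a cone (closed under positive scaling), we conclude $-\nabla f(\x_0) \in N_P(\x_0)$.

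I expect no real obstacle here; this is essentially an unwinding of definitions. The only points worth stating carefully are (a) that the normal cone is a convex cone, so it is closed under nonnegative scalar multiplication — which is immediate from the representation $N_P(\x) = \{(A_{I(\x)})^\top \bmu : \bmu \geq 0\}$ in \eqref{def: normal cone} — and (b) that we only need a single value $\lambda > 0$ in the forward direction, so the hypothesis ``for all $\lambda$'' is in fact much stronger than needed. Both directions reduce to the equivalence $\Pi_P(\y) = \x \iff \y - \x \in N_P(\x)$ together with the cone property, so the whole argument is a couple of lines.
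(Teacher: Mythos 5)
Your proof is correct and is essentially the paper's own argument: both reduce the claim to the first-order optimality condition $\x_0 = \Pi_P(\x_0 - \lambda\nabla f(\x_0)) \iff -\lambda\nabla f(\x_0) \in N_P(\x_0)$ and then strip off the factor $\lambda$ using the cone property (the paper phrases this via the inner-product form $\lambda\innerprod{-\nabla f(\x_0)}{\mathbf{z}-\x_0}\le 0$ for all $\mathbf{z}\in P$, which is the same thing). Your explicit two-direction split and the observation that a single $\lambda>0$ suffices for the forward implication are fine refinements but not a different route.
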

\noindent
\textit{Proof.} Note that $g(\lambda) = x_0$ if and only if $x_0$ satisfies the first order optimality condition for $g(\lambda)$: $\innerprod{x_0 - \lambda \nabla h(x_0) - x_0}{{z} - x_0} = \lambda \innerprod{- \nabla h(x_0)}{{z} - x_0} \leq 0$ $\text{for all } {z} \in \mathcal{P}$. Since $\lambda \geq 0$, we can conclude that $g(\lambda) = x_0$ if and only if $\innerprod{- \nabla h(x_0)}{{z} - x_0} \leq 0$ for all ${z} \in \mathcal{P}$. Note that $\innerprod{- \nabla h(x_0)}{{z} - x_0} \leq 0$ for all ${z} \in \mathcal{P}$ is equivalent to saying that $ - \nabla h(x_0) \in N_{\mathcal{P}}(x_0)$.
 \hfill $\square$  
 
Thus, in the notation of Theorem \ref{elephant}, $\lambda_0^+$ is either infinity (when $- \nabla h(x_0) \in N_{\mathcal{P}}(x_0)$) or it is zero. In the former case, Theorem \ref{elephant} holds trivially with $g(\lambda)=x_0$ for all $\lambda\in \mathbb{R}$. We will therefore assume that $\lambda_0^+ = 0$, without loss of generality. 



We now show that the segment of the projections curve between two projected points with the same normal cones has to be linear (Theorem \ref{elephant}$(iv)$). This result will be crucial to bound on the number of breakpoints of the curve.

\begin{restatable}
{lemma}{equivalence}
\label{equivalence}%
Let $\mathcal{P} \subseteq \mathbb{R}^n$ be a polytope. Let $x_0\in \mathcal{P}$ and $\nabla h(x_0) \in \mathbb{R}^n$ be given. Let $g(\lambda) = \Pi_{\mathcal{P}}(x_0 - \lambda \nabla h(x_0))$ be the parametric projections curve. Then, if $N_{\mathcal{P}}(g(\lambda)) = N_{\mathcal{P}}(g(\lambda^\prime))$ for some $\lambda < \lambda^\prime$, then the curve between $g(\lambda)$ and $g(\lambda^\prime)$ is linear, i.e., $g(\delta\lambda + (1-\delta)\lambda^\prime) = \delta g(\lambda) + (1-\delta)g(\lambda^\prime)$, for $\delta\in[0,1]$.
\end{restatable}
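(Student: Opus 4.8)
The plan is to exploit the first-order optimality characterization of Euclidean projections, i.e. $\mathbf{x} = \Pi_P(\mathbf{y})$ iff $(\mathbf{y} - \mathbf{x}) \in N_P(\mathbf{x})$, together with the hypothesis that the normal cones at the two endpoints coincide. Write $\mathbf{w} = \nabla f(\mathbf{x}_0)$ for brevity, so $g(\lambda) = \Pi_P(\mathbf{x}_0 - \lambda \mathbf{w})$. Fix $\delta \in [0,1]$ and set $\lambda_\delta = \delta \lambda + (1-\delta)\lambda'$, and let $\mathbf{z} = \delta g(\lambda) + (1-\delta) g(\lambda')$ be the candidate point; since $P$ is convex, $\mathbf{z} \in P$. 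I want to show $\mathbf{z} = g(\lambda_\delta) = \Pi_P(\mathbf{x}_0 - \lambda_\delta \mathbf{w})$, and by the characterization in \eqref{first-order optimality of projections} it suffices to prove that $(\mathbf{x}_0 - \lambda_\delta \mathbf{w}) - \mathbf{z} \in N_P(\mathbf{z})$.

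The first key step is to identify $N_P(\mathbf{z})$. Let $N \coloneqq N_P(g(\lambda)) = N_P(g(\lambda'))$ be the common normal cone. Intuitively, a convex combination of two points sharing a normal cone should have at least that normal cone; concretely, using the inner-product definition $N_P(\mathbf{y}) = \{\mathbf{u} : \innerprod{\mathbf{u}}{\mathbf{p} - \mathbf{y}} \le 0 \ \forall \mathbf{p} \in P\}$, any $\mathbf{u} \in N$ satisfies $\innerprod{\mathbf{u}}{\mathbf{p} - g(\lambda)} \le 0$ and $\innerprod{\mathbf{u}}{\mathbf{p} - g(\lambda')} \le 0$ for all $\mathbf{p} \in P$, and taking the $\delta$-convex combination of these two inequalities gives $\innerprod{\mathbf{u}}{\mathbf{p} - \mathbf{z}} \le 0$; hence $N \subseteq N_P(\mathbf{z})$. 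That containment is all I need (I do not need equality): it shows $\mathbf{z}$ lies on the face of $P$ whose normal cone contains $N$.

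The second key step is to exhibit the required vector inside $N \subseteq N_P(\mathbf{z})$. Since $g(\lambda) = \Pi_P(\mathbf{x}_0 - \lambda \mathbf{w})$, optimality gives $\mathbf{x}_0 - \lambda \mathbf{w} - g(\lambda) \in N$, and likewise $\mathbf{x}_0 - \lambda' \mathbf{w} - g(\lambda') \in N$. Because $N$ is a convex cone, the combination $\delta(\mathbf{x}_0 - \lambda \mathbf{w} - g(\lambda)) + (1-\delta)(\mathbf{x}_0 - \lambda' \mathbf{w} - g(\lambda'))$ also lies in $N$; but this combination simplifies exactly to $\mathbf{x}_0 - \lambda_\delta \mathbf{w} - \mathbf{z}$. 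Therefore $\mathbf{x}_0 - \lambda_\delta \mathbf{w} - \mathbf{z} \in N \subseteq N_P(\mathbf{z})$, which by \eqref{first-order optimality of projections} means $\mathbf{z} = \Pi_P(\mathbf{x}_0 - \lambda_\delta \mathbf{w}) = g(\lambda_\delta)$, as desired. Running this for all $\delta \in [0,1]$ gives linearity of $g$ on $[\lambda, \lambda']$ (monotonicity of the parametrization, i.e. that $\lambda \le \lambda_\delta \le \lambda'$, ensures the intermediate parameters are covered).

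I do not expect a serious obstacle here; the only thing to be careful about is making sure the convexity/conic-combination manipulations are applied to the correct objects — in particular using the \emph{inner-product} definition of the normal cone (rather than the generator form) when proving $N \subseteq N_P(\mathbf{z})$, since $\mathbf{z}$ is not a priori a vertex or a point whose active set we control. A minor point worth a sentence is the degenerate case $\lambda = \lambda'$, which is vacuous, and noting that $\mathbf{z} \in P$ by convexity so that the projection characterization is applicable.
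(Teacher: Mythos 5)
Your proof is correct and follows essentially the same route as the paper's: aggregate the two first-order optimality certificates with weights $\delta$ and $1-\delta$ and verify that the result certifies optimality of the convex combination. The only (minor, and arguably cleaner) difference is that you establish $N \subseteq N_P(\mathbf{z})$ directly from the inner-product definition of the normal cone, whereas the paper proves the full equality $N_P(g(\lambda)) = N_P(\mathbf{z})$ by tracking the active and inactive constraint index sets; as you correctly observe, the containment alone suffices.
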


\noindent \textit{Proof 
.} {We will show that the convex combination $\delta g(\lambda) + (1 - \delta) g(\lambda^\prime)$ of projections $g(\lambda)$ and $g(\lambda^\prime)$ satisfies the first-order optimality condition for the projection of $(x_0 - (\delta \lambda + (1 - \delta) \lambda^\prime)\nabla h(x_0))$, and is therefore equal to $g(\delta \lambda + (1-\delta) \lambda^\prime)$}. For brevity, let $I$ and $J$ denote the index set of active and inactive constraints at $g(\lambda)$ and $g(\lambda^\prime)$ {(since the normal cones are assumed to be the same)}. The first-order optimality of $g(\lambda)$ and $g(\lambda^\prime)$ yields
\begin{align}
    x_0 - \lambda \nabla h(x_0) - g(\lambda) &= {\mu}^\top {A}_I \in N_P(g(\lambda)),  \label{eq1 - lemma2} &\text{for some ${\mu} \in \mathbb{R}^{|I|}_+$},\\
    x_0 - \lambda^\prime \nabla h(x_0) - g(\lambda^\prime) &= \tilde{{\mu}}^\top {A}_I \in  N_P(g(\lambda)), &\text{for some $\tilde{{\mu}} \in \mathbb{R}^{|{I}|}_+$} \label{eq2 - lemma2}. 
\end{align}
Thus, by aggregating equations \eqref{eq1 - lemma2} and \eqref{eq2 - lemma2} with weights $\delta$ and $(1 - \delta)$ respectively, we get: 
\begin{equation}\label{eq3 - lemma2}
    x_0 - (\delta \lambda + (1 - \delta)\lambda^\prime)\nabla h(x_0) - (\delta g(\lambda) + (1 - \delta)g(\lambda^\prime)) = (\delta {\mu} + (1- \delta) \tilde{{\mu}})^\top {A}_{{I}} \in N_P(g(\lambda)). 
\end{equation}
But the normal cone $N_P(\delta g(\lambda) + (1-\delta) g(\lambda^\prime)) = N_P(g(\lambda))$, {since every point in the convex combination of two points with the same minimal face $F$ must also have $F$ as its minimal face. This proves the lemma. }

  \hfill $\square$

As an immediate corollary of this lemma we know that the projections curve does not intersect itself: {if the projections curve leaves the a breakpoint $x_i$, then $g(\lambda^\prime) \neq x$ for all $\lambda^\prime > \lambda_i + \epsilon$. \hme{Moreover, at any breakpoint of $g(\lambda)$, the normal cones must change (Theorem 2 (ii)). In addition, given any breakpoint $x_i$, we must drop constraints as soon as we leave $x_i$, i.e., $N_{\mathcal{P}}(g(\lambda_{i-1} + \epsilon)) \subseteq N_{\mathcal{P}}(
x_{i-1})$ for $\epsilon > 0$ sufficiently small, since $g(\lambda_{i-1} + \epsilon) = x_{i} + \epsilon d$, where $d \coloneqq \frac{g(\lambda_{i-1} + \epsilon) - x_i}{\epsilon} \in T_{\mathcal{P}}(x_i)$ is a feasbile direction at $x_i$. Similarly, at the subsequent breakpoint $x_{i+1}$, there is a change in the normal cone, and therefore, new constraints must become tight (Theorem 2 (iii)). These structural properties of the projection curve can be formally derived using complementary pivot theory (see Appendix \ref{app:structure} for a reduction). However, these methods rely on finding the next basic feasible solution (BFS) of the KKT conditions, which can be very inefficient to compute the next segment of the curve. Instead, we give a polyhedral view of the projections curve, and algorithmically compute each of its segments as follows: 
\begin{itemize}
\item[(i)] At a breakpoint \sge{$x_i$, if the shadow is orthogonal to normal of the projection at $x_i$, then the next linear segment of the projections curve is given by a maximal movement along the \emph{shadow} direction};  
\item[(ii)] Otherwise, the next linear segment is obtained by moving along the \emph{``in-face'' shadow} direction until the normal cone changes.  
\end{itemize}
\sge{Note that the only difference between the shadow and the in-face shadow is that the former projects the negative gradient onto the tangent cone at $x$ (i.e., on $A_{I(x)}d \leq 0$), whereas the latter projects the negative gradient onto the minimal face of $x$ (i.e., 
on $A_{I(x)}d = 0$).} We formalize these movements below:}


\begin{figure}[t]
    \centering
        \vspace{-10 pt}
    \includegraphics[scale = 0.65]{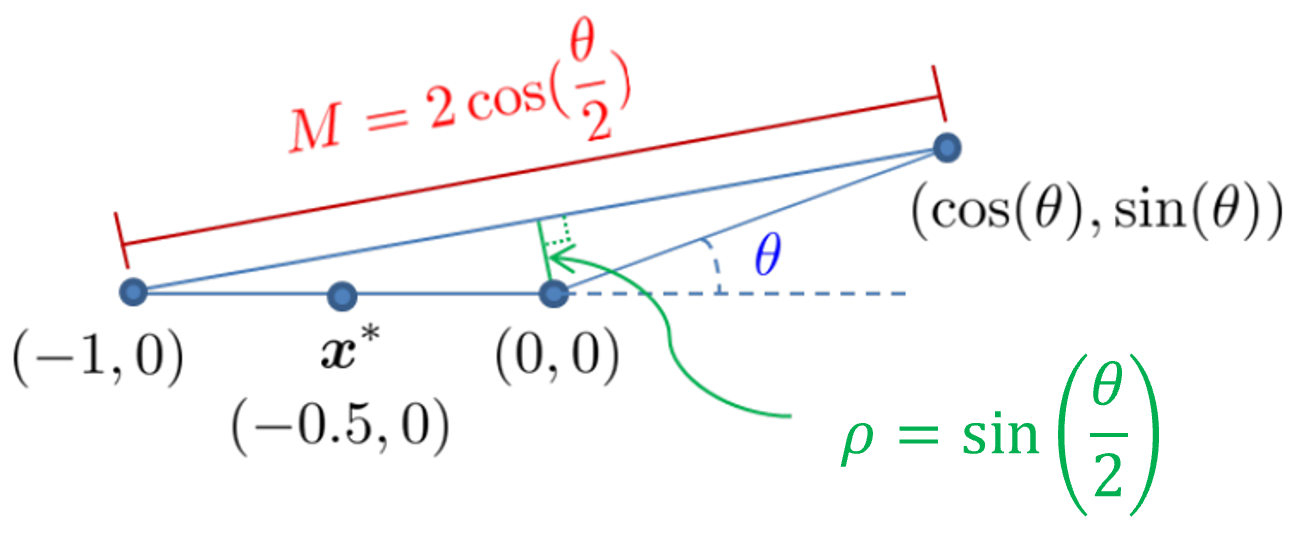}
    \hspace{10 pt}
    \includegraphics[scale = 0.35]{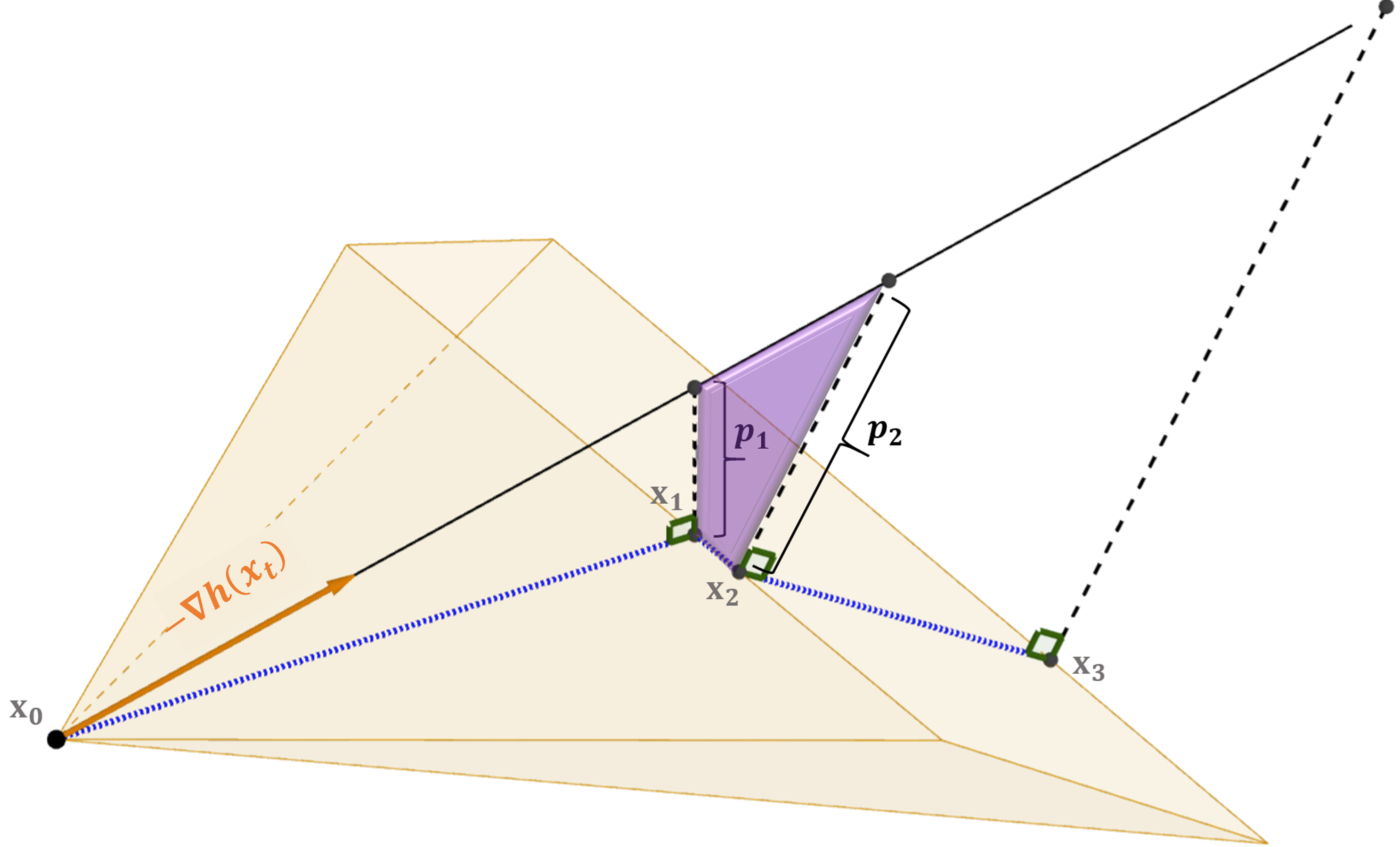}
    \caption{{Left:} Figure from Lacoste-Julien and Jaggi's work \cite{Lacoste2015} showing the pyramidal-width $\delta$ of a simple triangle domain as a function of the angle $\theta$. {Right:} An example of the projections curve, showing how the structure of the normal vectors ${p}_{i} \coloneqq x_0- \lambda_{i}\nabla h(x_0) - x_{i}$ determine whether we take an in-face step or shadow step\protect\footnotemark.}
    \label{fig:pyramidal width}
\end{figure}
\footnotetext{This example is inspired by Damiano Zeffiro.}


\begin{restatable}[{Breakpoints in the} Projections Curve]{theorem}{invariance}
\label{invariance}%
Let $\mathcal{P} \subseteq \mathbb{R}^n$ be a polytope. Let $x_{i-1} \in \mathcal{P}$ be the $i$th breakpoint in the projections curve $g(\lambda) = \Pi_{P}(x_0 -\lambda \nabla h(x_0))$, with $x_{i-1}=x_0$ {for $i=1$}. Let $\lambda_{i-1}\in \mathbb{R}_+$ be such that $g(\lambda_{i-1})=x_{i-1}$. Then, if $d_{x_{i-1}}^\Pi(\nabla h(x_0)) = {0}$, then $\lim_{\lambda \to \infty}g(\lambda) = x_{i-1}$ is the end point of $g(\lambda)$. Otherwise, $d_{x_{i-1}}^\Pi(\nabla h(x_0)) \neq {0}$, in which case the following holds: 
\begin{enumerate} 
 \vspace{-1 pt}
\setlength{\itemsep}{-1 pt}
 \item [(a)] {\bf Shadow steps:} If $\innerprod{x_0- \lambda_{i-1} \nabla h(x_0) - x_{i-1}}{d_{x_{i-1}}^\Pi (\nabla h(x_0)} = 0$, then the next breakpoint of the projections curve can be obtained by moving maximally in the shadow direction, i.e.,  
 $$x_{i} \coloneqq {x}_{i-1} + (\lambda_{i}^- - \lambda_{i-1}){d}_{{x}_{i-1}}^\Pi(\nabla h(x_0)),$$ 
 where $\lambda_{i}^-= \lambda_{i-1} + \max \{\delta : {x}_{i-1} + \delta {d}_{{x}_{i-1}}^\Pi(\nabla h(x_0)) \in \mathcal{P}\}$.

\item [(b)] {\bf In-face steps:} Otherwise, $\innerprod{x_0- \lambda_{i-1} \nabla h(x_0) - x_{i-1}}{d_{x_{i-1}}^\Pi (\nabla h(x_0)} \neq 0$. Then, the next breakpoint is obtained by moving in the in-face direction until the normal cone changes, i.e.,  $$x_{i} \coloneqq {x}_{i-1} + (\hat{\lambda}_{i-1} - \lambda_{i-1})\hat{d}_{{x}_{i-1}}^\Pi(\nabla h(x_0)),$$ 
where $\hat{\lambda}_{i-1} \coloneqq {\sup}\{\lambda \mid N_P(g(\lambda^\prime)) = N_P(x_{i-1}) \; \forall \lambda^\prime \in [\lambda_{i-1},\lambda)\}$.

\end{enumerate}
\end{restatable}

We refer the reader to Figure \ref{fig:pyramidal width} for an illustration {of the theorem, and include this as an algorithm called {\sc Trace}} (Algorithm \ref{alg: tracing PW proj}). To prove this theorem, we first show that if at breakpoint $x_{i-1}$ the shadow ${d}_{{x}_{i-1}}^\Pi(\nabla h(x_0)) = {0}$, then $x_{i-1}$ is the endpoint of the curve:
\begin{lemma} If the shadow $d_{x_{i-1}}^\Pi(\nabla h(x_0)) = {0}$, then $\lim_{\lambda \to \infty}g(\lambda) = x_{i-1}$ is the end point of the projections curve $g(\lambda)$.
\end{lemma} 
\noindent \textit{Proof.} Since $d_{x_{i-1}}^\Pi(\nabla h(x_0)) ={0}$, using Moreau's decomposition theorem (see Theorem \ref{moreau}) we have $-\nabla h(x_{0}) \in N_{\mathcal{P}}(x_{i-1})$. Note that the first-order optimality condition of $ g(\lambda_{i-1}) = x_{i-1}$ is $\innerprod{{p}}{{z} - x_{i-1}} \leq 0 $ for all ${z} \in \mathcal{P}$. Since $-\nabla h(x_{0}) \in N_{\mathcal{P}}(x_{i-1})$ (i.e., $\innerprod{{-}\nabla h(x_0)}{{z} - x_{i-1}} \leq 0 \; \forall {z} \in \mathcal{P}$) and $\lambda \geq \lambda_{i-1}$, we get $\innerprod{x_0-\lambda \nabla h(x_0) - x_{i-1}}{{z} - x_{i-1}}  \leq 0 \; \forall {z} \in \mathcal{P}$. Thus,
$x_{i-1}$ satisfies the first-order optimality condition for $g(\lambda)$ when $\lambda \geq \lambda_{i-1}^{+}$.
 \hfill $\square$
\\[5pt]
We are now ready to prove Theorem \ref{invariance}:\\[5pt]
\noindent
\textit{Proof 
.} Consider an $\epsilon-$perturbation along the projections curve $g(\lambda_{i-1} + \epsilon)$ from $x_{i-1}$. Then, we can write $g(\lambda_{i-1} + \epsilon) = x_{i-1} + \epsilon d$ for some $d \in \mathbb{R}^n$, where $\epsilon > 0$ is sufficiently small. For brevity, let the shadow at $x_{i-1}$ be $d_{x_{i-1}}^\Pi := d_{x_{i-1}}^\Pi (\nabla h(x_0))$, and in-face shadow be $\hat{d}_{x_{i-1}}^\Pi := \hat{d}_{x_{i-1}}^\Pi (\nabla h(x_0))$. Define the normals of the projections ${p}_{i-1} := x_0- \lambda_{i-1} \nabla h(x_0) - x_{i-1}$ and ${p}_{\epsilon} := x_0- (\lambda_{i-1} + \epsilon) \nabla h(x_0) - g(\lambda_{i-1} + \epsilon)$ at $x_{i-1}$ and $g(\lambda_{i-1} + \epsilon)$ respectively. 

We now proceed with two cases:

\begin{enumerate}
    \item[$(a)$] \textit{Suppose that $\innerprod{{p}_{i-1}}{d_{x_{i-1}}^\Pi} = 0$.} In this case, we will prove that $d = d_{x_{i-1}}^\Pi$, by showing that $x_{i-1} + \epsilon d_{x_{i-1}}^\Pi$ satisfies first-order optimality for $g(\lambda_{i-1} + \epsilon)$. 
    Indeed, for any ${z} \in \mathcal{P}$, we have

    \vspace{-5 pt}
    \begin{align*}
        &\innerprod{x_0- (\lambda_{i-1} + \epsilon) \nabla h(x_0) - x_{i-1} - \epsilon d_{x_{i-1}}^\Pi}{{z} - x_{i-1} - \epsilon d_{x_{i-1}}^\Pi}\nonumber\\
        &=\innerprod{{p}_{i-1} + \epsilon(-\nabla h(x_0) - d_{x_{i-1}}^\Pi)}{{z} - x_{i-1} - \epsilon d_{x_{i-1}}^\Pi}\nonumber\\
        &= -\epsilon\underbrace{\innerprod{{p}_{i-1}}{d_{x_{i-1}}^\Pi}}_{\overset{(i)}{=} 0}
    - \epsilon^2\underbrace{\innerprod{-\nabla h(x_0) - d_{x_{i-1}}^\Pi}{d_{x_{i-1}}^\Pi}}_{\overset{(ii)}{=}0}\\
    &\hspace{5 pt}+\underbrace{\innerprod{{p}_{i-1}}{{z} - x_{i-1}}}_{\overset{(iii)}{\leq} 0} +\epsilon \underbrace{\innerprod{-\nabla h(x_0) - d_{x_{i-1}}^\Pi}{{z} - x_{i-1}}}_{\overset{(iv)}{\leq} 0} \leq 0,
    \end{align*}
where we used our assumption $\innerprod{{p}_{i-1}}{d_{x_{i-1}}^\Pi} = 0$ in $(i)$, the definition of the shadow $d_{x_{i-1}}^\Pi := \Pi_{T_{\mathcal{P}}(x_{i-1})}(-\nabla h(x_0))$ in $(ii)$, the first-order optimality at $x_{i-1}$ in $(iii)$, and the fact that $-\nabla h(x_0) - d_{x_{i-1}}^\Pi \in N_{\mathcal{P}}(x_{i-1})$ (by Moreau's decomposition theorem) in $(iv)$. 

Since the above argument holds for any $\epsilon$ such that $x_{i-1} + \epsilon d_{x_{i-1}}^\Pi \in \mathcal{P}$, it follows that the next breakpoint $x_{i} \coloneqq {x}_{i-1} + (\lambda_{i}^- - \lambda_{i-1}){d}_{{x}_{i-1}}^\Pi(\nabla h(x_0))$, where $\lambda_{i}^-= \lambda_{i-1} + \max \{\delta : {x}_{i-1} + \delta {d}_{{x}_{i-1}}^\Pi(\nabla h(x_0)) \in \mathcal{P}\}$. This proves case (a) in the theorem.

\item[$(b)$]\textit{Now suppose that $\innerprod{{p}_{i-1}}{d_{x_{i-1}}^\Pi} \neq 0$.} We now show that in this case $d = \hat{d}_{x_{i-1}}^\Pi$. 
Since $g(\lambda_{i-1} + \epsilon) = x_{i-1} + \epsilon d$, by first-order optimality we have
    \begin{align} 
        &\hspace{-5 pt} \innerprod{{x_0- (\lambda_{{i-1}} + \epsilon) \nabla h(x_0) - x_{i-1} - \epsilon d}}{{z} - x_{i-1} - \epsilon d}\nonumber
        = -\epsilon\underbrace{\innerprod{{p}_{i-1}}{d}}_{\overset{(i)}{=} 0} \nonumber  \\
    & - \epsilon\underbrace{\innerprod{-\nabla h(x_0) - d}{\epsilon d}}_{\overset{(ii)}{=}0}
    +\underbrace{\innerprod{{p}_{i-1}}{{z} - x_{i-1}}}_{\overset{(iii)}{\leq} 0} +\epsilon \innerprod{-\nabla h(x_0) - d}{{z} - x_{i-1}} \leq 0 \quad \forall\; {z} \in \mathcal{P}, \label{eq6 thm 2}
    \end{align}
where we used the structure of orthogonal projections and the continuity of the projections curve which imply that $\innerprod{{p}_{\epsilon}}{d} = 0$ and $\innerprod{{p}_{{i-1}}}{d} = 0$ in $(i)$. In $(ii)$ we used the fact that $({p}_{\epsilon} -{p}_{i-1})/\epsilon = -\nabla h(x_0) - d$, and thus taking the inner product with $d$ on both sides implies that have $\innerprod{d}{-\nabla h(x_0) - d} = 0$. We then used first-order optimality at $x_{i-1}$ in $(iii)$.

To analyze that last term in $\innerprod{-\nabla h(x_0) - d}{{z} - x_{i-1}}$ in \eqref{eq6 thm 2}, we claim that $-\nabla h(x_0) = (-\nabla h(x_0) - d) \notin N_{\mathcal{P}}(x_i)$. To see this, suppose not. Since $\innerprod{{p}_{{i-1}}}{d} = 0$, by Moreau's decomposition theorem (see Theorem \ref{moreau}) this implies that $d = d_{x_{i-1}}^\Pi$, which in turn implies that $\innerprod{{p}_{{i-1}}}{d_{x_{i-1}}^\Pi} = 0$, which is a contradiction to our assumption of case (b). Since $-\nabla h(x_0) - d \notin N_{\mathcal{P}}(x_{i-1})$, there exists a vertex ${\tilde{z}} \in \mathcal{P}$ such that $\innerprod{-\nabla h(x_0) - d}{{\tilde{z}} - x_{i-1}} > 0$. Using \eqref{eq6 thm 2} this must imply that $\innerprod{{p}_{i-1}}{{\tilde{z}} - x_{i-1}} < 0$. However, first order optimality at $x_{i-1}$ given by $\innerprod{{p}_{i-1}}{{{z}} - x_{i-1}} \leq 0$ is a defining inequality for the face $F$, and hence satisfied with by all ${z} \in F$. Thus, ${\tilde{z}} \not \in F$, which further implies that $\innerprod{-\nabla h(x_0) - d}{{{z}} - x_{i-1}} \leq 0$ for all ${z} \in F$. This shows that $d = \hat{d}_{x_{i-1}}^\Pi$ as claimed, since it satisfies the first order optimality condition for $\hat{d}_{x_{i-1}}^\Pi$ as the projection of $-\nabla h(x_0)$ onto $\mathrm{Cone}(F - x_{i-1})$ given by $\innerprod{-\nabla h(x_0) - \hat{d}_{x_{i-1}}^\Pi}{{{z}} - x_{i-1}} \leq 0$ $\forall$ ${z} \in F$\footnote{The first order optimality condition for $\hat{d}_{x_{i-1}}^\Pi$ is $\innerprod{-\nabla h(x_0) - \hat{d_{x_{i-1}}^\Pi}}{y - \hat{d}_{x_{i-1}}^\Pi} \leq 0$ for any feasible direction $y \in \mathrm{Cone}(F - x_{i-1})$. Since $\innerprod{-\nabla h(x_0) - \hat{d}_{x_{i-1}}^\Pi}{\hat{d}_{x_{i-1}}^\Pi} = 0$ by definition of $\hat{d}_{x_{i-1}}^\Pi$ and any $y \in \mathrm{Cone}(F - x_{i-1})$ can be written as $\alpha ({z} - x_{i-1})$ for some ${z} \in F$ and $\alpha \geq 0$, this first order optimality condition reduces to $\innerprod{-\nabla h(x_0) - \hat{d}_{x_{i-1}}^\Pi}{{{z}} - x_{i-1}} \leq 0$ $\forall$ ${z} \in F$. Since $\innerprod{d}{-\nabla h(x_0) - d} = 0$ and $\innerprod{-\nabla h(x_0) - d}{{{z}} - x_{i-1}} \leq 0$ $\forall$ ${z} \in F$, we have that $d$ satisfies first-order optimality for $\hat{d}_{x_{i-1}}^\Pi$
.}. Moreover, the next breakpoint $x_i := g(\hat{\lambda}_{i-1}) ={x}_{i-1} + (\hat{\lambda}_{i-1} - \lambda_{i-1})\hat{d}_{{x}_{i-1}}^\Pi$ by the definition of $\hat{\lambda}_{i-1}$ and the fact that the projections curve leaves the minimal face after this point, and thus direction change in the projections curve must happen by Lemma \ref{equivalence}. This proves case (b) in the theorem. \hfill $\square$\\

\end{enumerate}


We give an example of in \textsc{Trace} in Figure \ref{fig:pyramidal width}-right. At the first breakpoint $x_1$ we have $\innerprod{{p}_{1}}{{d}_{{x}_{1}}^\Pi(\nabla h(x_0))} \neq 0$ in which case we take an in-face step, whereas at $x_2$ we have $\innerprod{{p}_{2}}{{d}_{{x}_{2}}^\Pi(\nabla h(x_0))} = 0$, in which case we take a shadow step.

Assuming oracle access to compute $d_{{x}}^\Pi({w})$ and {$\hat{\lambda}_{i-1}$} for any ${x} \in \mathcal{P}$, Theorem \ref{invariance} gives a constructive method for tracing the whole piecewise linear curve of $g_{x, {w}}(\cdot)$. We include this as an algorithm, {\sc Trace}$(x_0, x, {w}, \lambda_x)$, which traces the projections curve until a target step-size $\lambda$, and give its complete description in Algorithm \ref{alg: tracing PW proj}. The following remarks about the \textsc{Trace} algorithm (Algorithm \ref{alg: tracing PW proj}) will be useful to understand its implementation and correctness:
\paragraph{Remark 1.} 
\begin{enumerate}
\item [$(a)$] Theorem \ref{invariance} applied for $i=1$ implies that the first segment of the projections curve is given by walking maximally along the shadow, since:
\begin{align*}
    \innerprod{x_0- \lambda_{0}^{-} \nabla h(x_0) - x_{i-1}}{d_{x_{i-1}}^\Pi (\nabla h(x_0)} &= \innerprod{x_0- \lambda_{0}^{-} \nabla h(x_0) - x_{0}}{d_{x_{0}}^\Pi (\nabla h(x_0)} &&(\text{$i = 1$})\\
    &= \innerprod{-\lambda_{0}^{-} \nabla h(x_0)}{d_{x_{0}}^\Pi (\nabla h(x_0)}\\
    &= \innerprod{{0}}{d_{x_{0}}^\Pi (\nabla h(x_0)} = 0,
\end{align*}
where we used the fact that $\lambda_{0}^{-} = 0$ by Lemma \ref{basecase}.

\item[$(b)$] Whenever we take a shadow step in case $(a)$, we are guaranteed to add a tight constraint at the subsequent breakpoint, since the next breakpoint is obtained by taking the maximum movement along the directional derivative ${d}_{{x}_{i-1}}^\Pi(\nabla h(x_0))$. However, this need not be true in case $(b)$, unless the maximum in-face movement, i.e., $\hat{\lambda}_{i-1} = \lambda_{i-1}^+ + \max \{\delta : {x}_{i-1} + \delta \hat{d}_{{x}_{i-1}}^\Pi(\nabla h(x_0)) \in \mathcal{P}\}$.
\item [$(c)$] We can directly prove Theorem \ref{elephant} $(a)$-$(b)$ $(i)$-$(iii)$ using this theorem and induction.
\item [$(d)$]  Note that whenever we take an in-face step we have that 
$\hat{d}_{{x}_{i-1}}^\Pi(\nabla h(x_0)) \neq {d}_{{x}_{i-1}}^\Pi(\nabla h(x_0))$. This is because we take an in-face step whenevever $\innerprod{{p}_{i-1}}{{d}_{{x}_{i-1}}^\Pi(\nabla h(x_0)} \neq 0$; however, $\innerprod{{p}_{i-1}}{\hat{d}_{{x}_{i-1}}^\Pi(\nabla h(x_0)}) = 0$ always since ${p}_{i-1}$ is in the rowspace of $A_{(x_i)}$ and $\hat{d}_{{x}_{i-1}}^\Pi(\nabla h(x_0)$ is in the null-space of $A_{(x_i)}$ by definition.
\item [$(e)$]  If $\lambda_{i-1} < \lambda_{i-1}^+$, then projections curve moves trivially in-face in the interval $[\lambda_{i-1},\lambda_{i-1}^+]$ with $\hat{d}_{{x}_{i-1}}^\Pi(\nabla h(x_0)) = {0}$ where we obtain $\lambda_{i-1}^+ = \hat{\lambda}_{i-1}$; otherwise, we walk in-face along $\hat{d}_{{x}_{i-1}}^\Pi(\nabla h(x_0)) \neq {0}$ until $\lambda_{i}^- \coloneqq \hat{\lambda}_{i-1}$.
\end{enumerate}

\begin{algorithm}[!t] 
\caption{Tracing Parametric Projections Curve: $\textsc{Trace}(x_0, {x}, {w}$, $\lambda_x$)}
\label{alg: tracing PW proj}
\begin{algorithmic}[1]
\INPUT Polytope $\mathcal{P} \subseteq \mathbb{R}^n$, starting point of the projections curve $x_0 \in \mathcal{P}$, ${w} \in \mathbb{R}^n$, and a point on the projections curve $x \in \mathcal{P}$ such that $g(\lambda_x) =  \Pi_{\mathcal{P}}(x_0 - \lambda_x {w}) = x$, for a given $\lambda_x$.
\RETURN Next breakpoint $y$ (if any) and step-size $\lambda_{y} $ such that $g(\lambda_y) = y$.
\State Compute $d_{{x}}^\Pi \coloneqq \lim_{\epsilon \downarrow 0} \frac{\Pi_{\mathcal{P}}({x}- \epsilon {w}) - {x}}{\epsilon}$.
\If {$d_{{x}}^\Pi \neq {0}$} \Comment{\texttt{check if we are at endpoint}}
\If{$\innerprod{x_0 - \lambda_x {w} - x}{d_{{x}}^\Pi} = 0$} \Comment{\texttt{determine if we take shadow step}}
\State Compute $\gamma^{\max} = \max \{\delta \mid {x} + \delta d_{{x}}^\Pi \in \mathcal{P}\}$ \Comment{ \texttt{line-search in shadow direction}}
\State Set $d = d_{{x}}^\Pi$. \Comment{ \texttt{next linear direction we move in}}
\Else  \State $\hat{d}_{{x}}^\Pi, \gamma^{\max} = \textsc{Trace-In-Face}(x_0, {x}, {w},\lambda_x)$.  \Comment{\texttt{in-face step}}
\State Set $d = \hat{d}_{{x}}^\Pi$. \Comment{ \texttt{next linear direction we move in}}
\EndIf
\State Compute next break point ${y}= {x}+ (\gamma^{\max}){d}$
\State Update $\lambda^-_y = \lambda_x + \gamma^{\max}$. \Comment{\texttt{keep track of total step-size accrued}}
\Else \Comment{\texttt{$x$ is the endpoint of the curve}}
\State Set $y = x$ and $\lambda_y = \lambda_x$ \Comment{\texttt{output does not change}}
\EndIf
\RETURN ${y}$, $\lambda_y$ 
\end{algorithmic}
\end{algorithm}
\begin{algorithm}[!t] 
\caption{Tracing in-face movement: $\textsc{Trace-in-face}(x_0, {x}, {w}, \lambda_x)$}
\label{alg: tracing PW proj in-face}
\begin{algorithmic}[1]
\INPUT Polytope $\mathcal{P} \subseteq \mathbb{R}^n$, starting point of projections curve $x_0 \in \mathcal{P}$, current breakpoint $x \in \mathcal{P}$, direction ${w} \in \mathbb{R}^n$, and $\lambda_x$ satisfying  $g(\lambda_x) =  \Pi_{\mathcal{P}}(x_0 - \lambda_x {w}) = x$.
\State Compute $\hat{d}_{{x}}^\Pi = ({I}- {A}_{I({x})}^\dagger {A}_{I({x})})(-{w})$ \Comment{\texttt{in-face directional derivative at $x$}}
\State Evaluate $\hat{\lambda} = \sup\{\lambda \mid N_{\mathcal{P}}(g(\lambda^\prime)) = N_{\mathcal{P}}(x_{i-1}) \; \forall \lambda^\prime \in [\lambda_x,\lambda)\}$ \Comment{\texttt{see remark 3}}
\RETURN $\hat{d}_{{x}}^\Pi, \hat{\lambda}$
\end{algorithmic}
\end{algorithm}

\paragraph{Remark 2.} \textbf{Computing maximum in-face movement.} Suppose that we are at a breakpoint $x$ and we have that  $\innerprod{x_0- \lambda_{x}w - x}{d_{x}^\Pi} \neq 0$. 
Computing $\hat{\lambda}$ amounts to finding the maximum $\lambda$ such that $x_0 - \lambda {w} - ( x + (\lambda - \lambda_x) \hat{d}_{{x}}^\Pi) \in N_{\mathcal{P}}( x + (\lambda - \lambda_x) \hat{d}_{{x}}^\Pi)$ and $x + (\lambda - \lambda_x) \hat{d}_{{x}}^\Pi \in \mathcal{P}$, which can be computed by the solving the following linear program:
\begin{equation} \label{lo impl}
    \begin{aligned}
        \max~ &\lambda \\
    \text{s.t. } & x_0 - \lambda {w} - x - (\lambda - \lambda_x) \hat{d}_{{x}}^\Pi = {A}_{I({x})} ^\top {{\mu}},\\
    &x + (\lambda - \lambda_x) \hat{d}_{{x}}^\Pi \in \mathcal{P},\\
    &{\mu} \geq 0.
    \end{aligned}
    \end{equation}
Note that $\lambda = \lambda_x$ is a feasible solution to \eqref{lo impl}, which is also the optimal solution when the projections curve is moving to another facet and not moving in-face (case $(b)$ in Theorem \ref{invariance}). Furthermore, it is easy to see that \eqref{lo impl} is bounded, and thus always has an optimal solution. Though it is an open question to bound the number of breakpoints for any general polytope, we show a simple bound next which depends on the number of faces of the polytope:
\begin{restatable}[Bound on breakpoints in parametric projections curve]{theorem}{breakpoints}
\label{breakpoints}
Let $\mathcal{P} \subseteq \mathbb{R}^n$ be a polytope with $m$ facet inequalities (e.g., as in \eqref{polytope}). 
Then, the total number of breakpoints of the projections curve is $O(2^m)$ {steps}.
\end{restatable}
 \noindent \textit{Proof.} Note that once the projections curve leaves the interior of a face, it can no longer visit that face again, since the equivalence of normal cones at two projections implies the projections curve is linear between the two points (which must necessarily lie on the same face) (Theorem \ref{elephant} $(v)$). Thus, the number of breakpoints can be at most the number of faces, i.e.,  $O(2^m)$. 
\hfill $\square$   


{\color{blue}
Although the previous theorem establishes a worst-case exponential bound on the number of breakpoints of the projections curve, we next prove that the number of breakpoints of the projections curve is at most $n$ (the dimension) for both the simplex and the hypercube. Later, this distinction will be important when we benchmark our algorithm against the Away-steps Frank-Wolfe (AFW) variant of Lacoste-Julien and Jaggi \cite{Lacoste2015}. Recall, the convergence rate of AFW depends on the pyramidal-width of the polytope. To the best of our knowledge, the pyramidal-width has only been evaluated on the hypercube ($\rho = \frac{1}{\sqrt{n}}$ for the hypercube) and probability simplex ($\rho = \frac{2}{\sqrt{n}}$ for the simplex), due to its complexity \cite{Lacoste2015}. Using our next results, we will show that we obtain an $\Omega(n^2)$ factor reduction in convergence rates and iteration complexity compared to AFW. We remark that our following results might be of independent interest to the discrete-geometry community.


\begin{restatable}[Breakpoints for the  Hypercube]{theorem}{hypercubebreakpoints}
\label{hypercube breakpoints}
{Consider the $n$-dimensional hypercube  $\mathcal{H}_n \coloneqq \{ x \in \mathbb{R}^n \mid 0 \leq x_i \leq 1\; \forall\; i \in [n]\}$  and fix ${x}_0 \in \mathcal{H}_n$. Then, the projections curve $g(\lambda) = \Pi_{P}(x_0 - \lambda w)$ has only $O(n)$ breakpoints (for $\lambda\geq 0$).}
\end{restatable}

The proof of this theorem follows by simply tracking first-order optimality conditions to show that the shadow direction $d_x^{\Pi}(w)$ 
    is given by 
    \begin{equation} \label{hypercube shadow eqn}
        [d_{x}^\Pi({w})]_i = \begin{cases}
        0 &\text{if } i \in I_1 \text{ and } [-{w}]_i \leq 0;\\
        0 &\text{if } i \in I_2 \text{ and } [-{w}]_i \geq 0;\\
        [-{w}]_i &\text{otherwise}.
        \end{cases}
    \end{equation}
where $I_1 \coloneqq \{i \in [n] \mid x_i = 0\}$ and $I_2 \coloneqq \{i \in [n] \mid x_i = 1\}$ denote the index sets of tight constraints at $x$.

{Interestingly, we can further show that there are no in-face steps in the projection curve, and therefore, the number of breakpoints is at most $n$ (proof in Appendix \ref{app: proof of  hypercube breakpoints}). Due to the structure of the shadow (equation \eqref{hypercube shadow eqn}), it can be computed in $O(n)$ time, and the entire projections curve can be computed in $O(n^2)$ time.}

{Similarly, we can show that the number of breakpoints in the projections curve over the simplex is also $O(n)$:
}

\begin{restatable}[Breakpoints for the  Simplex]{theorem}{simplexbreakpoints}
\label{simplex breakpoints}
Let $\Delta_n \coloneqq \{ x \in \mathbb{R}^n \mid \sum_{i=1}^n x_i = 1,\, x_i \geq 0\, \forall\; i \in [n]\}$ denote the $(n-1)$-dimensional simplex and fix ${x}_0 \in \Delta_n$. Then, the projections curve $g(\lambda) = \Pi_{P}(x_0 - \lambda w)$ has only $O(n)$ breakpoints (for $\lambda\geq 0$).
\end{restatable}

\begin{algorithm}[t] 
\caption{Shadow over Simplex: $\textsc{Shadow-Simplex}(x, {w}$)}
\label{alg: shadow simplex}
\begin{algorithmic}[1]
\INPUT Point $x \in \Delta_n$ and direction ${w} \in \mathbb{R}^n$.
\State Set ${g} = - {w}$ and define $I = \{ i \in [n] \mid x_i = 0$\}
\State Let $[{u}_{x}]_i = 0$ if $i \in I$ and $[{u}_{x}]_i = 1$ otherwise.
\Comment{\texttt{ support of $x$}}
\State Compute $d = {g} \odot {u}_{x} - {\innerprod{{g}}{{u}_{x}}}{u}_{x}/{\|{u}_{x}\|^2} $ \Comment{\texttt{project ${g}$ onto hyperplane $\mathbbm{1}^\top {u}_{x} = 0$}}
\State Let $\tilde{I}$ be an ordering of $I$ such that ${g}_{\tilde{I}_1} \geq \dots \geq {g}_{\tilde{I}_{|I|}}$. \Comment{\texttt{$\tilde{I} =\text{sort}(i \in I, \text{ key}={g}[i])$}}
\For{$i \in \tilde{I}$}
\State Set ${{u}_{x}}^\prime = {u}_{x}$ and ${{u}_{x}}^\prime_i = 1$
\State Compute ${d}^\prime = {g} \odot {{u}_{x}}^\prime - {\innerprod{{g}}{{{u}_{x}}^\prime}}{{u}_{x}}^\prime/{\|{u}_{x}\|^2}$ \Comment{\texttt{project ${g}$ onto hyperplane $\mathbbm{1}^\top {{u}_{x}}^\prime = 0$}}
\State {\bf If} ${d}^\prime_i \geq 0$  for all $i \in I$, then set $d = {d}^\prime$ and ${{u}_{x}} = {{u}_{x}}^\prime$
\State {\bf Else} \textbf{break}
\EndFor
\RETURN $d$
\end{algorithmic}
\end{algorithm}

To prove this theorem, similar to the hypercube, we first chracaterize the shadow direction over the simplex and include a new algorithm to compute the shadow in $O(n^2 + n\log n)$ time:
\begin{restatable}{lemma}{simplexshadow}\label{simplex shadow}
Consider any $x \in \Delta_n$ and any direction ${w} \in {R}^n$, Then, the output of $\textsc{Shadow-Simplex}(x, {w})$ (Algorithm \ref{alg: shadow simplex}) is $d^\Pi_x({w})$. Moreover, the running time of the algorithm is  $O(n\log n + n^2$) time.
\end{restatable}

The idea of the proof is as follows; see Appendix \ref{app: proof of shadow simplex} for a full proof. Recall that $ d_{x}^\Pi({w}) = \argmin_{d \in T_{\Delta_n}(x)} \| - {w} -  d \|^2$, where $T_{\Delta_n}(x)$ is the tangent cone for the simplex at $x$. Letting, $I \coloneqq \{i \in [n] \mid x_i = 0\}$ be the index-set of tight constraints at $x$, we can re-write this optimization problem as $d_{x}^\Pi({w}) = \argmin_{d \in \mathbb{R}^n} \left\{ \| - {w} -  d \| \mid  \sum_{i=1}^n d_i = 0,\, d_i \geq 0 \forall i \in I \right\}$. Furthermore, letting $I^* = \{i \in I \mid d_{x}^\Pi({w})_i = 0\}$ be the index-set of coordinates where the shadow $d_{x}^\Pi({w})_i = 0$, we can write $d_{x}^\Pi({w}) = \argmin_{d \in \mathbb{R}^n}  \left\{\| {g} -  d \|^2 \mid \sum_{i=1}^n d_i = 0, \, d_i = 0 \text{ for all } i \in I^*\right\}$, which can be computed in closed form using $ d \coloneqq {-w} \odot {{r}} - \frac{\innerprod{{-w}}{{{r}}}}{\|{{r}}\|^2} {{r}} = d_{x}^\Pi({w})$\footnote{We use $\odot$ to denote a Hadamard product operation.} where $r \in \mathbb{R}^n$ is the vector defined by $r_i = 0$ if $i \in I^*$ and $r_i=1$ otherwise. Using optimality conditions over the simplex, we show that the support of $I^*$ coincides with the smallest values of $-w$. In the algorithm, we exploit this property and sort $w$ in $O(n \log n)$ time, and then search for $I^*$ within $I$ greedily, which takes $O(n^2)$ time.



Next, to prove Theorem \ref{simplex breakpoints}, we show that there are again no in-face steps in the projections curve over the simplex, and we have $n$ consecutive maximal shadow steps until the end point of the curve is reached. In each maximal shadow step, using the structure of the shadow given by the previous lemma, we show that we zero out a coordinate in $x_0$ until we reach the endpoint of the projections curve, which is a vertex of the simplex. This takes at most $n-1$ maximal shadow steps, i.e. the number of breakpoints of the projections curve $\beta \leq n$ (proof in Appendix \ref{app: proof of  simplex breakpoints}). Thus, 
we can compute the entire projections curve in $O(n^3)$ time. 

}

\section{Descent Directions}\label{sec:descentdirections}
Having characterized the properties of the parametric projections curve, we highlight connections between descent directions in conditional gradient and projected gradient methods. \sge{We first highlight a connection between the shadow and the \emph{gradient mapping}. Given any scalar
$\eta > 0$, the gradient mapping is defined as $G_\eta(x) \coloneqq \eta (x - \Pi_{\mathcal{P}} (x -\nabla h(x)/\eta)) = \eta (x - g(1/\eta))$
\footnote{Note that $g(1/\eta)$ can be obtained using $\textsc{Trace}({x},\nabla h(x), 1/\eta,0)$.}. The typical update in gradient mapping is $x_{t+1} = x_{t} - \frac{1}{\eta}G_{\eta}(x_t)$, which corresponds to a single projection step under the Euclidean norm. Some recent variants of gradient mapping \cite{pokutta2021} explore more elaborate update steps (using varying step-sizes in the same direction $G_{\eta}(x)$), however, these movements are interior to the polytope (typically), and thus very different from {\sc Shadow-Walk}.}


We next claim that the shadow is the best local feasible direction of descent in the following sense: it has the highest inner product with the negative gradient compared to any other normalized feasible direction. \textcolor{blue}{In other words, it is the analog of the negative gradient for constrained optimization.}
\begin{restatable}[Steepest \underline{feasible} descent of Shadow Steps]{lemma}{bestlocaldirection}
\label{best local direction}%
Let $\mathcal{P} \subseteq \mathbb{R}^n$ be a polytope defined as in \eqref{polytope} and let ${x} \in \mathcal{P}$ with gradient $\nabla h(x)$ be given. Let ${y}$ be any feasible direction at ${x}$, i.e., $\exists \gamma > 0$ s.t. ${x} + \gamma {y} \in \mathcal{P}$. Then
\begin{equation} \label{best local direction equation}
     \innerprod{- \nabla h({x})}{\frac{d_{{x}}^\Pi}{\|d_{{x}}^\Pi\|}}^2  =  \|d_{{x}}^\Pi \|^2 \geq \innerprod {d_{{x}}^\Pi}{\frac{y}{ \|y\|}}^2 \geq \innerprod {- \nabla h({x})}{\frac{y}{ \|y\|}}^2. 
\end{equation}
\end{restatable}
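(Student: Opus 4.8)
The plan is to prove the three (in)equalities in \eqref{best local direction equation} from left to right. The leftmost equality is essentially a restatement of Moreau's decomposition: since $\mathbf{d}_{\mathbf{x}}^\Pi = \Pi_{T_P(\x)}(-\nabla f(\x))$, Moreau's theorem gives the orthogonal decomposition $-\nabla f(\x) = \mathbf{d}_{\mathbf{x}}^\Pi + \mathbf{r}$, where $\mathbf{r} \in N_P(\x)$ (the polar of the tangent cone) and $\innerprod{\mathbf{d}_{\mathbf{x}}^\Pi}{\mathbf{r}} = 0$. Taking the inner product of this decomposition with $\mathbf{d}_{\mathbf{x}}^\Pi$ yields $\innerprod{-\nabla f(\x)}{\mathbf{d}_{\mathbf{x}}^\Pi} = \|\mathbf{d}_{\mathbf{x}}^\Pi\|^2$, and dividing by $\|\mathbf{d}_{\mathbf{x}}^\Pi\|$ and squaring gives the first equality. (We may assume $\mathbf{d}_{\mathbf{x}}^\Pi \neq \mathbf{0}$, since otherwise the statement is trivial.)

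For the rightmost inequality, I would again use the orthogonal decomposition $-\nabla f(\x) = \mathbf{d}_{\mathbf{x}}^\Pi + \mathbf{r}$. For any feasible direction $\y$ at $\x$, we have $\y \in T_P(\x)$, and since $\mathbf{r}$ lies in the polar cone $N_P(\x)$, we get $\innerprod{\mathbf{r}}{\y} \le 0$. Hence $\innerprod{-\nabla f(\x)}{\y} = \innerprod{\mathbf{d}_{\mathbf{x}}^\Pi}{\y} + \innerprod{\mathbf{r}}{\y} \le \innerprod{\mathbf{d}_{\mathbf{x}}^\Pi}{\y}$. This gives $\innerprod{-\nabla f(\x)}{\y/\|\y\|} \le \innerprod{\mathbf{d}_{\mathbf{x}}^\Pi}{\y/\|\y\|}$; one must be slightly careful about squaring an inequality, so I would note that both of the relevant quantities can be taken nonnegative — the middle quantity $\innerprod{\mathbf{d}_{\mathbf{x}}^\Pi}{\y/\|\y\|}$ is nonnegative because $\mathbf{d}_{\mathbf{x}}^\Pi \in T_P(\x)$ can be approximated by chords $\frac{1}{\gamma}(\mathbf{z}-\x)$ and the relevant inner products are controlled, or more directly because $\y$ ranges over all feasible directions so if $\innerprod{-\nabla f(\x)}{\y} < 0$ the squared inequality $\innerprod{\mathbf{d}^\Pi_\x}{\y/\|\y\|}^2 \ge \innerprod{-\nabla f(\x)}{\y/\|\y\|}^2$ could fail — so I would instead argue the squared inequality only needs to hold in the regime that matters and handle signs by replacing $\y$ with $-\y$ when $\innerprod{\mathbf{d}_{\mathbf{x}}^\Pi}{\y} < \innerprod{\mathbf{d}_{\mathbf{x}}^\Pi}{-\y}$, or simply observe that $|\innerprod{-\nabla f(\x)}{\y}| \le |\innerprod{\mathbf{d}_{\mathbf{x}}^\Pi}{\y}|$ need not hold in general and thus the intended reading is that $\y$ is a descent direction; I will align the argument with the paper's intended scope.

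For the middle inequality $\|\mathbf{d}_{\mathbf{x}}^\Pi\|^2 \ge \innerprod{\mathbf{d}_{\mathbf{x}}^\Pi}{\y/\|\y\|}^2$, this is just Cauchy–Schwarz: $\innerprod{\mathbf{d}_{\mathbf{x}}^\Pi}{\y/\|\y\|}^2 \le \|\mathbf{d}_{\mathbf{x}}^\Pi\|^2 \, \|\y/\|\y\|\|^2 = \|\mathbf{d}_{\mathbf{x}}^\Pi\|^2$. So the only genuinely delicate point — and the one I expect to be the main obstacle in writing this cleanly — is the sign/squaring bookkeeping in the rightmost inequality: one has a clean linear inequality $\innerprod{-\nabla f(\x)}{\y} \le \innerprod{\mathbf{d}_{\mathbf{x}}^\Pi}{\y}$, but squaring it requires knowing the signs of both sides, which forces either a normalization convention (restrict to descent directions, i.e., those $\y$ with $\innerprod{-\nabla f(\x)}{\y} \ge 0$, where the claim is the meaningful one) or an absolute-value argument. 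Everything else (Moreau decomposition, Cauchy–Schwarz, $\y \in T_P(\x)$, $\mathbf{r} \in N_P(\x)$) is routine and already available from the preliminaries.
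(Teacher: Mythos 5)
Your proof is correct and lands on the same three-part chain as the paper, but the key middle step is obtained by a genuinely different (and cleaner) route. The paper derives the linear inequality $\innerprod{\bd_{\x}^\Pi}{\y} \geq \innerprod{-\nabla f(\x)}{\y}$ by invoking the first-order optimality condition of $g(\lambda)=\Pi_P(\x-\lambda\nabla f(\x))$ together with the fact that $g(\lambda)=\x+\lambda\bd_{\x}^\Pi$ for small $\lambda$ (which rests on the piecewise-linearity machinery of Theorem \ref{elephant}), substituting $\mathbf{z}=\x+\bar{\lambda}\y$ as the test point. You instead read the inequality straight off the Moreau decomposition: writing $-\nabla f(\x)=\bd_{\x}^\Pi+\mathbf{r}$ with $\mathbf{r}\in N_P(\x)$ and $\y\in T_P(\x)$ gives $\innerprod{\mathbf{r}}{\y}\le 0$ immediately. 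Your route is shorter and self-contained, needing only the tangent/normal cone polarity from the preliminaries rather than the structure of the projections curve; the paper's route uses only the variational characterization of the Euclidean projection for this step, at the cost of importing the existence of the first linear segment. On the squaring issue you flag: you are right that the linear inequality does not imply the squared one without sign control (on $P=[0,1]^2$ at the origin with $-\nabla f(\x)=(1,-10)$ and $\y=(0,1)$ one gets $\bd_{\x}^\Pi=(1,0)$, so the squared claim reads $0\ge 100$), and the paper's own proof squares silently; the lemma is only ever applied with $\y$ a descent direction (e.g., $\y=\x^*-\x$, where $\innerprod{-\nabla f(\x)}{\y}\ge h(\x)\ge 0$ by convexity), so your proposed restriction to that regime is exactly what makes the final step legitimate and matches the lemma's actual use.
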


This result is intuitive as ${d}_{{x}}^\Pi$ is the projection of $-\nabla h({x})$ onto the set of feasible directions at $x$; this is the fact crucially used to prove this result (proof in Appendix \ref{app: proof of best local direction}). The above lemma will be useful in the convergence proof for our novel {\sc Shadow-CG} (Theorem \ref{alg converg2}) and {\sc Shadow-CG$^2$} (Theorem \ref{alg converg3}) algorithms. We also show that the shadow gives a true estimate of convergence to optimal, in the sense that $\|{d}_{x} ^\Pi\| = 0$ if and only if ${x} = \arg\min_{x\in \mathcal{P} } h(x)$ (Lemma \ref{primal gap optimality}). On the other hand, note that $\|\nabla h({x})\|$ does not satisfy this property and can be strictly positive at the constrained optimal solution. Moreover, applying the Cauchy-Shcwartz inequality to the left hand side of \eqref{best local direction equation}, for any $x \in \mathcal{P}$, we have 
\begin{equation}\label{cauchy shadow}
    \|\nabla h(x)\|^2 \geq \|{d}_{x}^\Pi\|^2,
\end{equation}
and so we have a tighter primal gap bound. In addition, for $\mu$-strongly convex functions, we show that $ \|{d}_{x}^\Pi \|^2 \geq 2\mu (h(x) - h(x^*))$. Hence, $\|{d}_{\iter{x}{t}}^\Pi\|$ is a natural quantity for estimating primal gaps without any dependence on geometric constants like those used in other CG variants such as AFW.

\begin{restatable}[Primal gap estimate]{lemma}{primalgapoptimality} \label{primal gap optimality}
Let $\mathcal{P} \subseteq \mathbb{R}^n$ be a polytope and fix any $x \in \mathcal{P}$. Consider any convex function $h: \mathcal{P} \to \mathbb{R}$ and let $x^* \in \argmin_{x \in \mathcal{P}}h(x)$. Then, $\|d_{{x}}^\Pi\| = 0$ if and only if $x = x^*$, where $x^* = \argmin_{x \in \mathcal{P}}h(x)$. Moreover, if $h$ is $\mu$-strongly convex over $\mathcal{P}$, then
\begin{equation} \label{deriv_performance_sc}
    \|{d}_{x}^\Pi \|^2 \geq 2\mu (h(x) - h(x^*)).
\end{equation}
\end{restatable}
The proof follows from first-order optimality and can be found in Appendix  \ref{sec:app primal gap proof}. \textcolor{blue}{The above lemma is a generalization of the PL-inequality \cite{lojasiewicz1963topological}, which states that $\|\nabla h(x)\|^2 \geq 2 \mu (h(x) - h(x^*))$ when $h$ is $\mu$-strongly convex. Note that we bound the primal-gap in terms of the norm of the shadow in \eqref{deriv_performance_sc}, instead of the gradient in PL-inequality. This is also a stronger condition compared to the analogous one which involves bounding with respect to the Frank-Wolfe gap.}



We next show that the end point of the projections curve is in fact the FW vertex under mild technical conditions. In other words, FW vertices are the projection of an infinite descent in the direction of the negative gradient (Theorem \ref{FW limit}). Thus, FW vertices are able to obtain the maximum movement in the negative gradient direction while remaining feasible compared to PGD, thereby giving FW-steps a new perspective.


\begin{restatable}[Optimism of FW vertex]{theorem}{FWlimit}
\label{FW limit}
Let $\mathcal{P} \subseteq{R}^n$ be a polytope and let ${x} \in \mathcal{P}$. Let $g(\lambda) = \Pi_{\mathcal{P}}({x} - \lambda \nabla h({x}))$ for $\lambda\geq0$. Then, the end point of this curve is: 
$
    \lim_{\lambda \to \infty} g(\lambda) = {v}^* = \argmin_{{v} \in F} \|{x} - {v}\|^2,  
$
where $F = \argmin_{{v} \in \mathcal{P}} \innerprod{\nabla h({x})}{{v}}$ is the face of $P$ that minimizes the gradient $\nabla  h(x)$. In particular, if $F$ is a vertex, then $\lim_{\lambda \to\infty} g(\lambda) = {v}^*$ is the FW vertex.
\end{restatable}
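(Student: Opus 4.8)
The plan is to show that the curve $g(\lambda) = \Pi_P(\x - \lambda \nabla f(\x))$ converges, as $\lambda \to \infty$, to the point of $F = \argmin_{\v \in P} \innerprod{\nabla f(\x)}{\v}$ closest to $\x$, where $F$ is the optimal face for the linear objective $\innerprod{\nabla f(\x)}{\cdot}$. From Theorem \ref{elephant}, $g$ is a piecewise linear curve with finitely many breakpoints, so the limit $\x_k := \lim_{\lambda \to \infty} g(\lambda)$ exists and equals the last breakpoint; once $\lambda$ exceeds $\lambda_k^-$ (which is finite whenever $\bd_{\x_k}^\Pi = \mb 0$, by Theorem \ref{invariance}/Lemma \ref{basecase}), the projection no longer moves. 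So the real content is to identify $\x_k$.

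First I would show $\x_k \in F$. Write $\y_\lambda := \x - \lambda \nabla f(\x)$ and $g(\lambda) = \Pi_P(\y_\lambda)$. By first-order optimality of projection \eqref{first-order optimality of projections}, $\innerprod{\y_\lambda - g(\lambda)}{\mathbf z - g(\lambda)} \le 0$ for all $\mathbf z \in P$; substituting $\y_\lambda = \x - \lambda \nabla f(\x)$ and rearranging gives $\lambda \innerprod{\nabla f(\x)}{\mathbf z - g(\lambda)} \ge \innerprod{\x - g(\lambda)}{\mathbf z - g(\lambda)}$ for all $\mathbf z \in P$. Pick $\mathbf z = \v$ any point of $F$ (an actual minimizer of the linear objective). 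The right-hand side is bounded uniformly over $\lambda$ (since $P$ is compact, $|\innerprod{\x - g(\lambda)}{\v - g(\lambda)}| \le D^2$), so dividing by $\lambda$ and letting $\lambda \to \infty$ yields $\innerprod{\nabla f(\x)}{\v - \x_k} \ge 0$, i.e. $\innerprod{\nabla f(\x)}{\x_k} \le \innerprod{\nabla f(\x)}{\v} = \min_{P}\innerprod{\nabla f(\x)}{\cdot}$. Hence $\x_k$ attains the minimum of the linear objective, so $\x_k \in F$.

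Next I would show $\x_k = \argmin_{\v \in F}\|\x - \v\|^2$. Since $\x_k \in F$, it suffices to prove $\innerprod{\x - \x_k}{\v - \x_k} \le 0$ for all $\v \in F$ (first-order optimality of the Euclidean projection of $\x$ onto the convex set $F$). Fix $\v \in F$. Because both $\x_k$ and $\v$ minimize $\innerprod{\nabla f(\x)}{\cdot}$ over $P$, we have $\innerprod{\nabla f(\x)}{\v - \x_k} = 0$. Now return to the optimality inequality at a large but finite $\lambda$, with $g(\lambda) = \x_k$ (valid for $\lambda \ge \lambda_k^-$) and $\mathbf z = \v$: $\lambda \innerprod{\nabla f(\x)}{\v - \x_k} \ge \innerprod{\x - \x_k}{\v - \x_k}$. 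The left-hand side is exactly $0$, so $\innerprod{\x - \x_k}{\v - \x_k} \le 0$, which is precisely the projection optimality condition. Therefore $\x_k = \Pi_F(\x) = \argmin_{\v \in F}\|\x - \v\|^2$. The final sentence of the theorem is then immediate: if $F$ is a single vertex, that vertex is by definition the Frank–Wolfe vertex $\v_t$ and the unique point of $F$, hence equals $\x_k$.

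The main obstacle I anticipate is the clean handling of the limit $\lambda \to \infty$ — specifically, justifying that the limit point exists and is attained at a finite breakpoint rather than only approached. This is where I lean on Theorem \ref{elephant}(a)-(b) and \ref{invariance}: piecewise linearity with finitely many segments guarantees the curve is eventually constant (the endpoint corresponds to $\bd_{\x_k}^\Pi = \mb 0$, equivalently $-\nabla f(\x) \in N_P(\x_k)$ by Lemma \ref{basecase}), which both supplies the limit and lets me evaluate the optimality inequality at a finite $\lambda$ with $g(\lambda) = \x_k$ exactly. A secondary subtlety is the non-uniqueness of $F$'s description: one should take $F$ to be the (relatively open or closed) optimal face and note the linear objective is constant on it, so the equality $\innerprod{\nabla f(\x)}{\v - \x_k} = 0$ holds for all $\v \in F$ regardless of whether $\x_k$ is in the relative interior of $F$; this is what makes the second step go through.
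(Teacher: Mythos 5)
Your proof is correct, and it follows the same two-step decomposition as the paper's: identify the limit with the last breakpoint $\x_k$ of the finite piecewise linear curve, show $\x_k$ lies in the optimal face $F$, and then show it is the point of $F$ closest to $\x$. For the first step the two arguments are essentially the same — the paper runs your ``divide by $\lambda$ and let $\lambda\to\infty$'' argument in contrapositive form, picking a violating $\mathbf{z}$ and a large $\bar\lambda$ to contradict first-order optimality of $g(\bar\lambda)=\x_k$. Where you genuinely diverge is the second step. The paper switches to the proximal characterization $g(\lambda)=\argmin_{\y\in P}\{\|\x-\y\|^2/(2\lambda)+\innerprod{\nabla f(\x)}{\y}\}$, derives the Pythagorean-type inequality $\|\x-\mathbf{v}^*\|^2+\|\x_k-\mathbf{v}^*\|^2\le\|\x_k-\x\|^2$ from the optimality of $\mathbf{v}^*=\Pi_F(\x)$, and reaches a contradiction with the optimality of $\x_k$ in the prox problem. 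You instead verify the variational inequality $\innerprod{\x-\x_k}{\mathbf{v}-\x_k}\le 0$ for all $\mathbf{v}\in F$ directly, by evaluating the projection optimality condition at a finite $\lambda\ge\lambda_k^-$ (where $g(\lambda)=\x_k$ exactly) and using that $\innerprod{\nabla f(\x)}{\mathbf{v}-\x_k}=0$ since both points minimize the linear objective. Your route is shorter and avoids both the prox reformulation and the argument by contradiction; the only thing it requires in addition is the observation that $F$ is convex, so the variational inequality indeed characterizes the unique minimizer of $\|\x-\cdot\|^2$ over $F$ — which you note. Both arguments lean identically on Theorem \ref{elephant} and Theorem \ref{breakpoints} for the existence of a last breakpoint at which the curve becomes stationary, so neither is more general than the other.
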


 \noindent \textit{Proof.}
If $\nabla  h(x) =0$, then $g(\lambda) = x$ for all $\lambda\in \mathbb{R}^n$, and the theorem holds trivially, so assume otherwise. Let $x_{i} \in \mathcal{P}$ be the $i$th breakpoint in the projections curve $g(\lambda) = \Pi_{P}(x_0 -\lambda\nabla h(x_0))$, with $x_{i}=x$ for $i=0$. Using Theorem \ref{breakpoints}, we know that the number of breakpoints curve $k \leq 2^m$. Consider the last breakpoint $x_k$ in the curve and let $\lambda_k^- = \min \{\lambda \geq 0~|~ g(\lambda) = x_k\}$. We will now show that $x_k = {v}^*$. 
\begin{itemize}
    \item[(i)] We first show that $x_k \in F$, i.e. $- \nabla h(x) \in N_{\mathcal{P}}(x_k)$. Suppose for a contradiction that this not true. Then there exists some ${z} \in \mathcal{P}$ such that $\innerprod{- \nabla h(x)}{{z} - x_k} > 0$. Consider any scalar $\bar{\lambda}$ satisfying $\bar{\lambda}  >  \max\{-\frac{\innerprod{x - x_k}{{z} - x_k}}{\innerprod{- \nabla h(x)}{{z} - x_k}},\lambda_k^-\}$. Then, using the choice of $\bar{\lambda}$ we have
    \begin{align*}
        &\innerprod{x - x_k}{{z} - x_k} + \bar{\lambda}  \innerprod{- \nabla h(x)}{{z} - x_k} > 0 \implies \innerprod{x -x_k - \bar{\lambda}\nabla h(x)}{{z} - x_k} > 0.
    \end{align*}

Now, since  $g(\lambda) = x_k$ for $\lambda \geq \lambda_k^{-}$, we have $g(\bar{\lambda}) = x_k$. Thus, the above equation could be written as $\innerprod{x - \bar{\lambda}\nabla h(x) - g(\bar{\lambda})}{{z} -g(\bar{\lambda})} > 0$, which contradicts the first-order optimality for $g(\bar{\lambda})$.

    \item[(ii)] We will now show that $x_k$ is additionally the closest point to $x$ in $\ell_2$ norm. Again, suppose for contradiction that this not true. Let $\epsilon \coloneqq \|x_k - {v}^*\| > 0$. First, note that by definition, $g(\lambda) =\argmin_{y \in \mathcal{P}} \left \{ \frac {\| x - y \|^2}{2\lambda}   + \innerprod{\nabla h(x) }{y} \right\}$ for any $\lambda > 0$. Then, since $g(\lambda_k^-) = x_k$ we have
\begin{equation} \label{opt of prox1}
    \frac {\| x -  x_k\|^2}{2\lambda_k^-}   + \innerprod{\nabla h(x) }{ x_k} \leq  \frac {\| x -  {z}\|^2}{2\lambda_k^-}   + \innerprod{\nabla h(x) }{{z}}, ~~{\forall {z}\in \mathcal{P}}.
\end{equation}
The first-order optimality condition for ${v}^*$ (for minimizing $\|x-{y}\|^2$ over ${y}\in F$) implies $\innerprod{{v}^* - x}{{z} - {v}^*} \geq 0$ for all ${z} \in F$. In particular, $({v}^* - x)^\top (x_k - {v}^*) \geq 0$ since $x_k \in F$. Therefore,
\begin{align}
    \|x - {v}^*\|^2 +  \|x_k - {v}^*\|^2 &= \|x\|^2 + 2\|{v}^*\|^2 - 2x^\top {v}^* + \|x_k\|^2 - 2x_k^\top {v}^*\\
    &=\|x_k - x\|^2 - 2({v}^* - x)^\top (x_k - {v}^*)\\
    &\leq  \|x_k - x\|^2. \label{theorem 4 - eq 2}
\end{align}

But then, since $x_k \in F$, we know that $\innerprod{\nabla h(x) }{ x_k} = \innerprod{\nabla h(x) }{{v}^*}$, which implies
\begin{align*}
    \frac {\| x -  {v}^*\|^2}{2\lambda_k^-} + \innerprod{\nabla h(x)}{ {v}^*} & \leq \frac {\|x_k - x\|^2 - \|x_k - {v}^*\|^2}{2\lambda_k^-} + \innerprod{\nabla h(x)}{ {v}^*} &&(\text{using \eqref{theorem 4 - eq 2}})\\
    &= \frac {\|x_k - x\|^2 - \epsilon}{2\lambda_k^-} + \innerprod{\nabla h(x)}{ {v}^*} &&(\|x_k - {v}^*\| = \epsilon)\\
    &< \frac {\|x_k - x\|^2}{2\lambda_k^-} + \innerprod{\nabla h(x) }{x_k}, &&(\epsilon > 0)
\end{align*}
contradicting the optimality of $x_{k}$ \eqref{opt of prox1}.
\end{itemize}
   \hfill $\square$  

Next, we show that the shadow steps also give the best away direction emanating from away-vertices in the minimal face at any $x\in \mathcal{P} $ (which is precisely the set of \emph{possible} away vertices (see Lemma 10 in  \cite{moondra2021reusing}), using Lemma \ref{best local direction} and the following result (the proof in Appendix \ref{sec: shadow-away}):

\begin{restatable}[Away-steps]{lemma}{awayshadow}
\label{lemma away-shadow}%
Let $\mathcal{P} \subseteq \mathbb{R}^n$ be a polytope defined as in \eqref{polytope} and fix ${x} \in \mathcal{P}$. Let $F = \{{z} \in \mathcal{P}: A_{I(x)} {z} = {b}_{I(x)}\}$ be the minimal face containing $x$. Further, choose $\delta_{\max} = \max \{\delta : {x} - \delta d_{{x}}^\Pi \in \mathcal{P}\}$ and consider the away point ${a}_{x} = {x} - \delta_{\max} d_{{x}}^\Pi$ obtained by moving maximally along the the direction of the negative shadow. Then, ${a}_{x}$ lies in $F$ and the corresponding away-direction is simply $x - {a}_{x} = \delta_{\max} d_{x}^{\Pi}$.
\end{restatable}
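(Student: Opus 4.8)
The plan is to verify the two assertions separately: first that the away point $\mathbf{a}_{\x} = \mathbf{x} - \delta_{\max}\mathbf{d}_{\mathbf{x}}^\Pi$ lies in the minimal face $F$, and second the (then trivial) identity $\x - \mathbf{a}_{\x} = \delta_{\max}\mathbf{d}_{\x}^\Pi$. The second claim is immediate from the definition of $\mathbf{a}_{\x}$, so the content is entirely in the first. To show $\mathbf{a}_{\x}\in F$, by the description of $F = \{\mathbf{z}\in P : \A_{I(\x)}\mathbf{z} = \mathbf{b}_{I(\x)}\}$ it suffices to show that all constraints active at $\x$ remain active at $\x - \delta\mathbf{d}_{\x}^\Pi$ for every $\delta\in[0,\delta_{\max}]$; equivalently, that $\langle \mathbf{a}_i, \mathbf{d}_{\x}^\Pi\rangle = 0$ for every $i\in I(\x)$.

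The key fact I would use is that $\mathbf{d}_{\x}^\Pi$ is the projection of $-\nabla f(\x)$ onto the tangent cone $T_P(\x) = \{\bd : \A_{I(\x)}\bd \le \mathbf{0}\}$, as recalled in the preliminaries (citing Tapia et al.). By Moreau's decomposition theorem applied to the cone $\mathcal{K} = T_P(\x)$, we can write $-\nabla f(\x) = \mathbf{d}_{\x}^\Pi + \mathbf{n}$ where $\mathbf{n} = \Pi_{\mathcal{K}^\circ}(-\nabla f(\x)) \in N_P(\x)$ (the polar of the tangent cone is exactly the normal cone), and crucially $\langle \mathbf{d}_{\x}^\Pi, \mathbf{n}\rangle = 0$. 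Since $\mathbf{n}\in N_P(\x) = \{(\A_{I(\x)})^\top\bmu : \bmu\ge 0\}$, write $\mathbf{n} = \sum_{i\in I(\x)}\mu_i \mathbf{a}_i$ with $\mu_i\ge 0$. Then $0 = \langle \mathbf{d}_{\x}^\Pi, \mathbf{n}\rangle = \sum_{i\in I(\x)}\mu_i\langle \mathbf{d}_{\x}^\Pi, \mathbf{a}_i\rangle$. But $\mathbf{d}_{\x}^\Pi\in T_P(\x)$ means $\langle \mathbf{a}_i, \mathbf{d}_{\x}^\Pi\rangle \le 0$ for all $i\in I(\x)$, so every summand $\mu_i\langle \mathbf{d}_{\x}^\Pi,\mathbf{a}_i\rangle$ is $\le 0$; a sum of nonpositive terms equal to zero forces each to vanish: $\mu_i\langle \mathbf{d}_{\x}^\Pi,\mathbf{a}_i\rangle = 0$ for all $i$. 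This alone does not yet give $\langle \mathbf{a}_i,\mathbf{d}_{\x}^\Pi\rangle = 0$ for the indices with $\mu_i = 0$, which is the subtlety I would need to handle.

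To close that gap I would instead argue directly from complementary slackness of the projection program, or equivalently observe: $\mathbf{d}_{\x}^\Pi$ also equals the projection of $-\nabla f(\x)$ onto $\mathrm{Cone}(F - \x) = \{\bd : \A_{I(\x)}\bd = \mathbf{0}\}$ whenever $-\nabla f(\x)$'s tangent-cone projection happens to lie in that subspace — but in general it need not. The cleaner route: note that $\mathbf{d}_{\x}^\Pi$, being the negative shadow at $\x$, equals $\frac{\x_1 - \x}{\lambda_1^-}$ the first segment direction of the projections curve $g(\lambda) = \Pi_P(\x - \lambda\nabla f(\x))$ (equation~\eqref{newdirectional1}). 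By Lemma~\ref{movement along linear segments}(a) applied to the feasible direction $\mathbf{d}_{\x}^\Pi$ at $\x$, we only have $N_P(\x + \lambda\mathbf{d}_{\x}^\Pi)\subseteq N_P(\x)$, i.e. constraints may be dropped going forward — which is the wrong direction for our claim. So the honest statement must be that the KKT conditions of $\min_{\bd\in T_P(\x)}\|-\nabla f(\x) - \bd\|^2$ give a multiplier vector $\bmu\ge 0$ with $-\nabla f(\x) - \mathbf{d}_{\x}^\Pi = (\A_{I(\x)})^\top\bmu$ and complementary slackness $\mu_i(\A_{I(\x)}\mathbf{d}_{\x}^\Pi)_i = 0$; and the actual assertion $\mathbf{a}_{\x}\in F$ should follow because moving along $-\mathbf{d}_{\x}^\Pi$ is moving into the cone $-T_P(\x)$, and the active constraints at $\x$ with $\langle\mathbf{a}_i,\mathbf{d}_{\x}^\Pi\rangle < 0$ would become \emph{violated} (not just inactive) in the $-\mathbf{d}_{\x}^\Pi$ direction, contradicting $\delta_{\max} > 0$ being a genuine feasible step; hence $\langle\mathbf{a}_i,\mathbf{d}_{\x}^\Pi\rangle = 0$ for all $i\in I(\x)$ after all, which gives $\A_{I(\x)}\mathbf{a}_{\x} = \mathbf{b}_{I(\x)}$ and thus $\mathbf{a}_{\x}\in F$. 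I expect \textbf{this last point — ruling out strictly-negative inner products with active constraint normals, equivalently showing the negative shadow lies in the lineality space $\mathrm{Cone}(F-\x)$ of the tangent cone — to be the main obstacle}, and the natural tool is exactly the Moreau decomposition combined with the feasibility of a strictly positive step $\delta_{\max}$ in the $-\mathbf{d}_{\x}^\Pi$ direction (if some active constraint had $\langle\mathbf{a}_i,\mathbf{d}_{\x}^\Pi\rangle<0$ then $\langle\mathbf{a}_i,-\delta\mathbf{d}_{\x}^\Pi\rangle>0$ pushes that constraint past $b_i$ immediately, forcing $\delta_{\max}=0$, a contradiction unless $\mathbf{d}_{\x}^\Pi=\mathbf{0}$, in which case the statement is vacuous). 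Once $\A_{I(\x)}\mathbf{d}_{\x}^\Pi = \mathbf{0}$ is established, both conclusions follow at once: $\x - \delta_{\max}\mathbf{d}_{\x}^\Pi$ satisfies every equality defining $F$ and lies in $P$ by choice of $\delta_{\max}$, so $\mathbf{a}_{\x}\in F$, and $\x - \mathbf{a}_{\x} = \delta_{\max}\mathbf{d}_{\x}^\Pi$ by construction.
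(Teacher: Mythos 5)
Your final argument coincides with the paper's: since $\mathbf{d}_{\x}^\Pi \in T_P(\x)$ we have $\A_{I(\x)}\mathbf{d}_{\x}^\Pi \le \mathbf{0}$, and feasibility of the reverse direction $-\mathbf{d}_{\x}^\Pi$ (i.e., $\delta_{\max}>0$) gives the opposite inequality, forcing $\A_{I(\x)}\mathbf{d}_{\x}^\Pi = \mathbf{0}$ and hence $\mathbf{a}_{\x}\in F$; the Moreau/complementary-slackness detour is unnecessary. The only small slip is the edge case: when $\delta_{\max}=0$ (which can happen with $\mathbf{d}_{\x}^\Pi \neq \mathbf{0}$) the lemma is not vacuous but holds trivially because $\mathbf{a}_{\x}=\x\in F$, which is how the paper disposes of it.
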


Lemma \ref{lemma away-shadow} states that the away point obtained by  the maximal movement along the negative shadow from $x$, ${a}_{x}$, lies in the convex hull of $A \coloneqq \{{v} \in \mathrm{vert}(P)\cap F\}$. The set $A$ is precisely the set of all possible away vertices (see Lemma 10 in  \cite{moondra2021reusing}). Thus, the shadow gives the best direction of descent emanating from the convex hull of all possible away-vertices.

\section{Continuous-time Dynamics and {\sc Shadow-Walk} Algorithm}\label{sec:appODE}
{\color{blue} Consider an itereative descent method for solving $\min_{x \in \mathbb{R}^n} h(x)$ and let $x^{(t)}$ be an iterate of this method. If we were to take an $\epsilon$ step in any direction from $\iter{x}{t}$ to minimize $h$, then we would move along the negative gradient, i.e., $\iter{x}{t} -\epsilon \nabla h(\iter{x}{t})$, since it results in the highest local progress as the negative gradient is the direction of steepest descent in unconstrained optimization. Furthermore, it is known that the algorithm obtained by walking along the negative gradient, i.e. gradient descent, has linear convergence for smooth and strongly convex functions in unconstrained optimization \cite{Karimi_2016}. However, the negative gradient may no longer be feasible for constrained optimization.} We established in the last section that the shadow of the negative gradient ${d}_{\iter{x}{t}}^\Pi$ is indeed the best ``local'' direction of descent (Lemma \ref{best local direction}), i.e. it is steepest descent direction for constrained optimization and is a true measure of the primal gap since convergence in $\|{d}_{\iter{x}{t}} ^\Pi\|$ implies optimality (Lemma \ref{primal gap optimality}). Having characterized the parametric projections curve, the natural question is if a shadow-descent algorithm that walks along the directional derivative with respect to the negative gradient at iterate $\iter{x}{t}$, converges linearly. We start by answering that question positively in  continuous-time.

{We present the continuous-time dynamics for moving along the shadow of the gradient in the polytope $\mathcal{P}$. To do that, we briefly review the Mirror Descent (MD) algorithm; our exposition uses the mirror-map view of MD proved by Beck and Teboulle in 2003 \cite{beck2003}. The MD algorithm is defined with the help of a strongly-convex and differentiable function $\phi: \mathcal{P} \to \mathbb{R}$, known as a distance-generating function. The Fenchel Conjugate of $\phi$ is defined by $\phi^*(y)= \max_{x\in \mathcal{P}} \{\innerprod{x}{y}-\phi(x)\}$. From Danskin's theorem (see e.g., \cite{beck2017first}), we know that $\nabla \phi^*({y}) = \argmax_{{x}  \in \mathcal{P}}\{ \innerprod{{y}}{{x} } - \phi(x)\}$. The mirror descent algorithm is iterative and starts with a point $\iter{x}{0} \in \mathcal{P}$. Then, for any $t \geq 1$, the algorithm first performs \emph{unconstrained} gradient descent steps in the dual space using $\nabla{\phi}$ (in our case since $\mathcal{P} \subseteq \mathbb{R}^n$, the dual space is identified with $\mathbb{R}^n$):
\begin{equation*}
  \iter{z}{t} = \nabla \phi (\iter{x}{t}) - \gamma_t \nabla h(\iter{x}{t}) \text{ for some step size $\gamma_t \geq 0$,}
\end{equation*}
and then maps back these descent steps to the primal space by computing a so-called \emph{Bregman projection}, which under the mirror-map view could be computed as follows 
: $\iter{x}{t+1} =  \nabla \phi^*(\iter{z}{t}).$}

\subsection{ODE for Moving in the Shadow of the Gradient}
Let $X(t)$ denote the continuous-time trajectory of our dynamics and $\dot{X}$ denote the time-derivative of $X(t)$, i.e., $\dot{X}(t) = \frac{d}{dt}X(t)$. In \cite{Krichene_2015}, Krichene et. al propose the following coupled dynamics $(X(t), Z(t))$ for mirror descent, where $X(t)$ evolves in the primal space (i.e. domain of $\nabla \phi$), and $Z(t)$ evolves in the dual space (i.e. domain of $\nabla \phi^*$) as follows, initialized with $Z(0) = \iter{z}{0} \in \mathrm{dom}(\nabla \phi^*) \text{ with } \nabla \phi^*(\iter{z}{0}) = \iter{x}{0} \in \mathcal{P}$: 
\begin{equation} \label{MD ODE}
    \dot{Z}(t) = -\nabla h(X(t)), \quad X(t) = \nabla \phi^*(Z(t)). 
\end{equation}
Let ${d}_{X(t)}^{\phi}$ be the directional derivative with respect to the Bregman projections in the mirror descent algorithm, i.e., ${d}_{X(t)}^{\phi}= \lim_{\epsilon \downarrow 0}  \frac{\nabla \phi^* (\nabla \phi(X(t)) - \epsilon \nabla h(X(t))) - X(t)}{\epsilon}$. The continuous time dynamics of tracing this directional derivative are simply \begin{equation}
    \dot{X}(t) = {d}_{X(t)}^\phi. \label{ODe for shadow}
\end{equation} 
{These dynamics in \eqref{ODe for shadow}} solely operate in the primal space and one can initialize them with $X(0) = \iter{x}{0}\in \mathcal{P}$
and show that they are equivalent to \eqref{MD ODE} under mild conditions (proof in Appendix \ref{sec: app ode theorem}):

\begin{restatable}{theorem}{odetheorem}
\label{ode theorem}
Assume that the directional derivative ${d}_{X(t)}^\phi$ exists for all $t \geq 0$. Then, the dynamics for mirror descent \eqref{MD ODE} are equivalent to the shadow dynamics $\dot{X}(t) = {d}_{X(t)}^\phi$ with the same initial conditions $X(0) = \iter{x}{0}\in \mathcal{P}$.
\end{restatable}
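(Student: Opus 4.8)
The plan is to pass between the two systems with the chain rule, using that $\nabla\phi$ and $\nabla\phi^*$ are mutually inverse maps. The one ``mild technical condition'' I would isolate first is the identity $\nabla\phi\big(\nabla\phi^*(\mathbf{z})\big)=\mathbf{z}$ on the relevant domain; it holds because $\phi$ is strongly convex and differentiable, so $\nabla\phi^*=(\nabla\phi)^{-1}$, and in particular along any trajectory with $X(t)=\nabla\phi^*(Z(t))$ we have $\nabla\phi(X(t))=Z(t)$. Using this, the directional derivative that drives the shadow dynamics can be written entirely in the dual variable,
\[
  \mathbf{d}_{X(t)}^{\phi}
  = \lim_{\epsilon\downarrow 0}\frac{\nabla\phi^*\big(Z(t)-\epsilon\nabla f(X(t))\big)-\nabla\phi^*(Z(t))}{\epsilon},
\]
i.e.\ it is exactly the one-sided directional derivative of the map $\nabla\phi^*$ at $Z(t)$ in the direction $-\nabla f(X(t))$.

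For the forward direction (a solution of \eqref{MD ODE} solves \eqref{ODe for shadow}), I would differentiate $X(t)=\nabla\phi^*(Z(t))$ in $t$. Writing $Z(t+h)=Z(t)-h\nabla f(X(t))+o(h)$ from $\dot Z=-\nabla f(X)$, and using that $\nabla\phi^*$ is Lipschitz (a consequence of strong convexity of $\phi$, which makes $\phi^*$ smooth, and reduces to non-expansiveness of $\Pi_P$ in the Euclidean case), the $o(h)$ perturbation is absorbed and
\[
  \dot X(t)=\lim_{h\downarrow 0}\frac{\nabla\phi^*(Z(t+h))-\nabla\phi^*(Z(t))}{h}
  =\lim_{h\downarrow 0}\frac{\nabla\phi^*\big(Z(t)-h\nabla f(X(t))\big)-\nabla\phi^*(Z(t))}{h}
  =\mathbf{d}_{X(t)}^{\phi},
\]
which is \eqref{ODe for shadow}; the initial conditions agree since $X(0)=\nabla\phi^*(\mathbf{z}_0)=\x_0$. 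When $\phi^*$ is twice differentiable this is literally the chain rule $\dot X=\nabla^2\phi^*(Z)\,\dot Z=\nabla^2\phi^*(Z)\big(-\nabla f(X)\big)$, and the right-hand side is the same directional derivative.

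For the reverse direction, given $X(t)$ with $\dot X(t)=\mathbf{d}_{X(t)}^{\phi}$ and $X(0)=\x_0$, I would define $Z(t):=\nabla\phi(X(t))$. Then $Z(0)=\nabla\phi(\x_0)=\mathbf{z}_0$ and $\nabla\phi^*(Z(t))=X(t)$ by the inverse identity, so the second relation in \eqref{MD ODE} holds. Differentiating $Z=\nabla\phi(X)$ gives $\dot Z=\nabla^2\phi(X)\,\dot X=\nabla^2\phi(X)\,\mathbf{d}_X^{\phi}$, and since $\mathbf{d}_X^{\phi}=\nabla^2\phi^*(Z)\big(-\nabla f(X)\big)$ with $\nabla^2\phi^*(Z)=\big(\nabla^2\phi(X)\big)^{-1}$ (inverse function theorem), this equals $-\nabla f(X(t))$, recovering \eqref{MD ODE}; in the non-smooth case one replaces this step by the same $o(h)$ bookkeeping applied to $\nabla\phi$ along the trajectory.

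The genuine obstacle is regularity, not algebra: the Bregman projection $\nabla\phi^*$ (the Euclidean projection $\Pi_P$ in the polytope case) need not be differentiable, so $\nabla^2\phi^*$ may fail to exist and the naive chain rule is unavailable. This is exactly why the hypothesis only asks that the directional derivative $\mathbf{d}_{X(t)}^{\phi}$ exist along the trajectory; the content of the proof is then to show that this one-sided directional derivative coincides with $\dot X(t)$ in each system, which is handled by the Lipschitz ($o(h)$-absorption) estimate above together with the global inverse identity $\nabla\phi\circ\nabla\phi^*=\mathrm{id}$. A secondary point to check carefully is that $t\mapsto\nabla\phi(X(t))$ is differentiable in the reverse direction, which follows from differentiability of $X$ and the assumed existence of $\mathbf{d}_{X(t)}^{\phi}$.
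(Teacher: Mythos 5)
Your forward direction (from \eqref{MD ODE} to \eqref{ODe for shadow}) is essentially the paper's proof: the same chain-rule computation, the same identification of the right-hand side with the one-sided directional derivative of $\nabla \phi^*$, and the same substitution $Z(t) = \nabla \phi(X(t))$ justified by $\nabla\phi = (\nabla\phi^*)^{-1}$ (the paper cites Rockafellar's Theorem 23.5 for this). Your $o(h)$-absorption via Lipschitzness of $\nabla\phi^*$ is in fact a slightly more careful rendering than the paper's, which writes the chain rule through $\nabla^2\phi^*(Z(t))$ even though $\nabla\phi^*$ need not be differentiable; your version needs only the assumed existence of the directional derivative. The paper stops after this single direction.

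The reverse direction you add, however, contains a step that fails. You set $Z(t) := \nabla\phi(X(t))$ and argue $\dot Z = \nabla^2\phi(X)\,\mathbf{d}_X^\phi = -\nabla f(X)$ using $\nabla^2\phi^*(Z) = \bigl(\nabla^2\phi(X)\bigr)^{-1}$. In the constrained setting this identity cannot hold: $\nabla\phi^*$ is the Bregman projection onto $P$, a many-to-one map, so where its Jacobian exists it is singular (in the Euclidean case it is the orthogonal projector onto the affine hull of a face), and it is never the inverse of the nonsingular $\nabla^2\phi(X)$. Concretely, take $P=[0,1]$, $\phi=\tfrac12 x^2$, $f(x)=x$, $\x_0=0$: the shadow dynamics give $X(t)\equiv 0$ since $\mathbf{d}_0^\Pi=\mathbf{0}$, so your $Z(t)=\nabla\phi(X(t))\equiv 0$ has $\dot Z = 0 \neq -1 = -\nabla f(X(t))$. (The same example shows $Z(0)=\nabla\phi(\x_0)$ need not equal the prescribed $\mathbf{z}_0$, which is only required to satisfy $\nabla\phi^*(\mathbf{z}_0)=\x_0$ and is non-unique on the boundary.) The correct construction for this direction is to define $Z(t) := \mathbf{z}_0 - \int_0^t \nabla f(X(s))\,ds$, which satisfies the dual ODE by fiat, and then prove that $\nabla\phi^*(Z(t)) = X(t)$ along the whole trajectory --- a genuinely different verification that your proposal does not supply. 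Since the paper itself only establishes the forward implication, your attempt is not worse off than the paper on the equivalence claim, but the reverse argument as written should be removed or replaced.
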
 

Although the results of Theorem \ref{ode theorem} hold for general mirror-maps, in this work we focus on the case when $\phi = \frac{1}{2} \|\cdot\|^2$, in which case ${d}_{X(t)}^\phi = {d}_{X(t)}^\Pi$ to exploit the piecewise linear structure of the projections curve. Therefore, Theorem \ref{ode theorem} shows that the continuous-time dynamics of moving in the (Euclidean) shadow of the gradient are equivalent to those of PGD. Moreover, we also show the following convergence result of those dynamics (the proof is in Appendix \ref{proof of ode conver theorem}):
\begin{restatable}{theorem}{odeconver}
\label{ode_conver} 
Let $\mathcal{P} \subseteq \mathbb{R}^n$ be a polytope and suppose that $h: \mathcal{P} \rightarrow \mathbb{R}$ is differentiable and $\mu$-strongly convex over $\mathcal{P}$. Consider the shadow dynamics $\dot{X}(t) = {d}_{X(t)}^\Pi$ with initial conditions $X(0) = \iter{x}{0}\in \mathcal{P}$. Then for each $t \geq 0$, we have $X(t) \in \mathcal{P}$. Moreover, the primal gap associated with the shadow dynamics decreases as: $$h(X(t)) - h({x}^*) \leq e^{-2\mu t} h(X(0)) - h({x}^*).$$
\end{restatable}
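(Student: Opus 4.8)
The plan is to establish feasibility and the exponential decay rate separately, since both follow from relatively soft arguments once we recall that $\mathbf{d}_{X(t)}^\Pi$ is the projection of $-\nabla f(X(t))$ onto the tangent cone $T_P(X(t))$. First I would argue feasibility: since $\mathbf{d}_{X(t)}^\Pi \in T_P(X(t)) = \mathrm{Cone}(P - X(t))$ is by construction a feasible direction at $X(t)$, the flow $\dot{X}(t) = \mathbf{d}_{X(t)}^\Pi$ always points (weakly) into $P$, so $X(t)$ cannot cross the boundary; more carefully, if $X(t)$ touches a facet $\{\langle \mathbf{a}_i, \cdot\rangle = b_i\}$ then $i \in I(X(t))$ and the tangent-cone constraint $\mathbf{A}_{I(X(t))} \mathbf{d}_{X(t)}^\Pi \le \mathbf{0}$ forces $\langle \mathbf{a}_i, \dot{X}(t)\rangle \le 0$, so the active constraint is not violated. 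Hence $X(t) \in P$ for all $t \ge 0$. (One can also invoke that $X(t)$ coincides with the PGD/mirror-descent trajectory of Theorem~\ref{ode theorem}, whose iterates are feasible, but the direct argument is cleaner.)

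Next I would bound the derivative of the primal gap $h(X(t)) = f(X(t)) - f(\mathbf{x}^*)$ along the flow. By the chain rule,
\begin{equation*}
\frac{d}{dt} h(X(t)) = \innerprod{\nabla f(X(t))}{\dot{X}(t)} = \innerprod{\nabla f(X(t))}{\mathbf{d}_{X(t)}^\Pi} = -\innerprod{-\nabla f(X(t))}{\mathbf{d}_{X(t)}^\Pi}.
\end{equation*}
Because $\mathbf{d}_{X(t)}^\Pi$ is the Euclidean projection of $-\nabla f(X(t))$ onto the convex cone $T_P(X(t))$, a standard identity for projections onto cones gives $\innerprod{-\nabla f(X(t))}{\mathbf{d}_{X(t)}^\Pi} = \|\mathbf{d}_{X(t)}^\Pi\|^2$ (equivalently, this is exactly the first equality in Lemma~\ref{best local direction}). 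Therefore $\frac{d}{dt} h(X(t)) = -\|\mathbf{d}_{X(t)}^\Pi\|^2$.

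The crucial step is to lower bound $\|\mathbf{d}_{X(t)}^\Pi\|^2$ in terms of $h(X(t))$. Applying Lemma~\ref{best local direction} with the feasible direction $\mathbf{y} = \mathbf{x}^* - X(t)$ yields
\begin{equation*}
\|\mathbf{d}_{X(t)}^\Pi\|^2 \;\ge\; \innerprod{-\nabla f(X(t))}{\frac{\mathbf{x}^* - X(t)}{\|\mathbf{x}^* - X(t)\|}}^2,
\end{equation*}
and combining with $\mu$-strong convexity, $f(\mathbf{x}^*) \ge f(X(t)) + \innerprod{\nabla f(X(t))}{\mathbf{x}^* - X(t)} + \tfrac{\mu}{2}\|\mathbf{x}^* - X(t)\|^2 \ge f(X(t)) + \innerprod{\nabla f(X(t))}{\mathbf{x}^* - X(t)} + \mu\,(f(X(t)) - f(\mathbf{x}^*))$ — no, more directly, strong convexity gives $\innerprod{-\nabla f(X(t))}{\mathbf{x}^* - X(t)} \ge h(X(t)) + \tfrac{\mu}{2}\|\mathbf{x}^* - X(t)\|^2$, and then $\innerprod{-\nabla f(X(t))}{\mathbf{x}^* - X(t)} \ge \sqrt{2\mu\, h(X(t))}\,\|\mathbf{x}^*-X(t)\|$ by AM–GM. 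Substituting gives $\|\mathbf{d}_{X(t)}^\Pi\|^2 \ge 2\mu\, h(X(t))$, i.e. the bound referenced in the footnote after Lemma~\ref{best local direction}. Hence $\frac{d}{dt} h(X(t)) \le -2\mu\, h(X(t))$, and Grönwall's inequality gives $h(X(t)) \le e^{-2\mu t} h(\mathbf{x}_0)$, as claimed.

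I expect the main obstacle to be technical regularity rather than conceptual: the curve $t \mapsto X(t)$ is only piecewise smooth (the shadow $\mathbf{d}_{X(t)}^\Pi$ jumps as the active set changes, per Theorem~\ref{elephant}), so the chain-rule computation of $\frac{d}{dt}h(X(t))$ must be justified on each smooth arc and continuity of $h(X(t))$ used to patch across breakpoints; the differential inequality $\frac{d}{dt}h \le -2\mu h$ then holds almost everywhere, which suffices for the Grönwall conclusion since $h(X(t))$ is absolutely continuous. A secondary point worth spelling out is existence/uniqueness of the trajectory $X(t)$ itself given the discontinuous right-hand side, which presumably is handled (or assumed) in the appendix via the equivalence to the well-posed PGD dynamics in Theorem~\ref{ode theorem}.
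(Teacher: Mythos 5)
Your proof is correct and follows essentially the same route as the paper's: chain rule, the cone-projection identity $\innerprod{-\nabla f(X(t))}{\mathbf{d}_{X(t)}^\Pi} = \|\mathbf{d}_{X(t)}^\Pi\|^2$, the primal-gap estimate $\|\mathbf{d}_{X(t)}^\Pi\|^2 \geq 2\mu\, h(X(t))$ obtained from Lemma~\ref{best local direction} with $\y = \x^* - X(t)$ plus strong convexity, and Gr\"onwall's inequality. The only cosmetic difference is that the paper establishes feasibility by invoking the equivalence with the PGD dynamics of Theorem~\ref{ode theorem} rather than your direct tangent-cone argument (which it mentions only as your fallback), and the paper is no more careful than you are about the piecewise-smoothness and well-posedness issues you flag at the end.
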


\subsection{Shadow-Walk Method}
Although the continuous-dynamics of moving along the shadow are the same as those of PGD and achieve linear convergence, it is unclear how to discretize this continuous-time process and obtain a linearly convergent algorithm. To ensure feasibility we may have arbitrarily small step-sizes, and therefore, cannot show sufficient progress in such cases. \sge{This issue occurs in existing FW variants that use some form of away-steps. In fact, many recent papers \cite{Lacoste2015, Bashiri_2017, combettes2020, Braun2017, Pena2015, Beck2017, garber2020revisiting, Garber_2016, dvurechensky2023generalized, pedregosa2020linearly} use a similar way of showing convergence by bounding the number of such `bad' steps with dimension reduction arguments, which crucially rely on maintaining iterates as a convex combination of vertices. However, unlike away-steps in CG variants, we consider shadow directions {(${d}_{{x}}^\Pi$, $\hat{{d}}_{{x}}^\Pi$)} for constrained descent that enable us to give a novel geometry-independent proof of convergence. This method, {\sc Shadow-Walk}, effectively mimics a projected gradient descent algorithm\footnote{\color{blue}Gradient mapping is equivalent under Euclidean prox operator is equivalent to projected gradient descent, however the direction of movement is in the interior of the polytope (for large enough step sizes), whereas {\sc Shadow-Walk} moves on the boundary of the polytope, which will be a crucial difference in the discretization.}, except it uses the projections curve to reach the desired unconstrained descent. Although simple, this will help us interpolate between constrained descent optimization methods and projected gradient descent, and achieve an affine-invariant rate for {\sc Shadow-CG}.} 

\sge{We propose {\sc Shadow-Walk} (Algorithm \ref{alg:shadowwalk}): to trace} the projections curve by walking along the shadow at an iterate $\iter{x}{t}$ until enough progress is ensured. \sge{In general, note that the shadow ODE might revisit a fixed facet a large number of times (see Figure \ref{fig:proj}) with decreasing step-sizes; and the step-size along which one can move along shadow might be arbitrarily small to enure feasibility, in which case we cannot show a sufficient decrease in primal gap to prove a global convergence rate. To solve these two issues, we borrow the notion of ``enough progress'' from PGD, and trace the projections curve until effectively a stepsize of $1/L$ is achieved in unconstrained descent (where $L$ is the smoothness constant).} 
We use the {\sc Trace-Opt} procedure to do this, which chains together consecutive short descent steps and uses line-search until it minimizes the function $h$ over the linear segments of projections curve. The complete description of the algorithm is given in Algorithm \ref{alg: tracing PW proj opt} in Appendix \ref{app: trace opt}. {\color{blue} This process remedies the aforementioned issue of diminishing step-sizes and enables us to prove a global linear convergence rate\footnote{\sge{Tapia et al. \cite{Tapia_1972} prove that the limit point of the sequence of iterates obtained by walking along the shadow (with appropriately chosen step sizes to ensure feasibility) converges to a stationary point. However, they did not give global convergence rates.}}. } 

\sge{Each {\sc Trace-Opt} call only requires one gradient oracle call. This results in linear convergence, as long as the number of steps {taken by the} {\sc Trace-Opt} {procedure} are bounded, i.e., the number of ``bad'' boundary cases. This is atmost the number of breakpoints in the projections curve, which we show linear bounds for the simplex and hypercube (Theorems \ref{hypercube breakpoints}, \ref{simplex breakpoints}), but this could be large in general (Theorem \ref{breakpoints}).} We thus establish the following result (proof in Appendix \ref{sec:app proof of theorem alg convg 1}):

\begin{algorithm}[t]
\caption{{\sc Shadow-Walk} Algorithm}
\label{alg:shadowwalk}
\begin{algorithmic}[1]
\INPUT Polytope $\mathcal{P} \subseteq \mathbb{R}^n$, function $h: \mathcal{P} \to \mathbb{R}$ and initialization $\iter{x}{0} \in \mathcal{P}$.
\For{$t = 0, \dots. T$}
    \State Update $\iter{x}{t+1} \coloneqq$ \textsc{Trace-Opt}$(\iter{x}{t}, \nabla h(\iter{x}{t}))$ . \Comment{ \texttt{trace projections curve}}
\EndFor
\RETURN $\iter{x}{T+1}$
\end{algorithmic}
\end{algorithm} 

\begin{restatable}{theorem}{shadowwalkconver} \label{alg converg1}
Let $\mathcal{P} \subseteq \mathbb{R}^n$ be a polytope and suppose that $h: \mathcal{P} \rightarrow \mathbb{R}$ is $L$-smooth and $\mu$-strongly convex over $\mathcal{P}$. Then the primal gap of the {\sc Shadow-Walk} algorithm decreases geometrically: 
$$(h(\iter{x}{t+1}) - h(x^*))\leq \left(1- \frac{\mu}{ L}\right)(h(\iter{x}{t}) - h(x^*))$$
with each iteration of the {\sc Shadow-Walk} algorithm (assuming {\sc Trace-Opt} is a single step). Moreover, the number of oracle calls to shadow, {in-face shadow} and line-search oracles to obtain an $\epsilon$-accurate solution is $O\left(\beta \frac{L}{\mu} \log (\frac{1}{\epsilon}) \right)$, where $\beta$ is the maximum number of breakpoints of the parametric projections curve that the \textsc{Trace-Opt} method visits. 
\end{restatable}


\textcolor{blue}{Comparing the convergence rate of {\sc Shadow-Walk} with rate of the ODE in Theorem \ref{ode_conver}, we see that we pay for it's discretization with the constants $L$ and $\beta$. 
Although our linear convergence rate depends on the number of facet inequalities $m$, it eliminates affine-variant constants needed in CG variants. For example, Jaggi and Lacoste-Julien \cite{Lacoste2015} prove a contraction of $\left(1 - \frac{\mu}{L} \left( \frac{\rho}{D}\right)^2 \right)$ in the objective to get an $\epsilon-$approximate solution for the away-step FW algorithm, where $\rho$ is the pyramidal-width of the domain. Although FW is known to be an affine-invariant algorithm, the pyramidal width is affine-variant (e.g., see Figure \ref{fig:pyramidal width} for an example), whereas the number of breakpoints $\beta$ does not increase with affine transformations. Moreover, unlike $\beta$, the pyramidal-width has no known worst-case lower bounds for general polytopes. 
}


\section{Shadow Conditional Gradient Method} \label{sec:shadowCG}
\sge{We have shown so far that shadow steps are ``best'' away-steps (Lemma \ref{lemma away-shadow}), and {\sc Shadow-Walk} is the constrained descent analogue of projected gradient descent. We also have seen that Frank-Wolfe vertices are the maximal movements one can obtain using a projected gradient descent step from any point (Theorem \ref{FW limit}). We next propose a new method {\sc Shadow-CG} which uses Frank-Wolfe steps earlier in the algorithm and shadow steps more frequently towards the end of the algorithm to achieve the best-of-both-worlds. Frank-Wolfe steps allow us skip tracing a lot of breakpoints in the projections curve earlier in the optimization, and shadow steps reduce zig-zagging close to the optimal solution. To obtain a computable proxy for when to switch}, we note that Frank-Wolfe directions become close to orthogonal to the negative gradient towards the end of the algorithm. However, in this case the norm of the shadow also starts diminishing (Lemma \ref{primal gap optimality}). Therefore, we propose to choose the FW direction in line 7 of Algorithm \ref{alg:pessimistic FW2} whenever $\innerprod{-\nabla h(\iter{x}{t})}{ {d}_t^{\text{FW}}} \geq \innerprod{-\nabla h(\iter{x}{t})}{{{d}_{\iter{x}{t}}^\Pi}/{\|{d}_{\iter{x}{t}}^\Pi\|}} = \|{d}_{\iter{x}{t}}^\Pi\|$, and shadow directions otherwise. This is sufficient to give {\color{blue}provable} linear convergence. 

\begin{algorithm}[t] 
\caption{Shadow Conditional Gradient ({\sc Shadow-CG})}
\label{alg:pessimistic FW2}
\begin{algorithmic}[1]
\INPUT Polytope $\mathcal{P} \subseteq \mathbb{R}^n$, function $f : P \to \mathbb{R}$, $x_0 \in P$ and accuracy parameter $\varepsilon$.
\For{$t = 0, \dots. T$}
        \State Let $\iter{v}{t} \coloneqq \argmin_{{v} \in P} \innerprod{\nabla h(\iter{x}{t})}{{v}}$ and ${d}_t^{\text{FW}} \coloneqq \iter{v}{t} - \iter{x}{t}$. \Comment{\texttt{FW direction} }
    \State {\bf if} $\innerprod{-\nabla h(\iter{x}{t})}{ {d}_t^{\text{FW}}} \leq \varepsilon$ {\bf then } \Return $\iter{x}{t}$
     \Comment{\texttt{ primal gap is small enough}}
    \State Compute the derivative of projection of the gradient ${d}_{\iter{x}{t}}^\Pi$
    \State {\bf if} $ \innerprod{-\nabla h(\iter{x}{t})}{{{d}_{\iter{x}{t}}^\Pi}/{\|{d}_{\iter{x}{t}}^\Pi\|}} \leq \innerprod{-\nabla h(\iter{x}{t})}{ {d}_t^{\text{FW}}}$
    \State \hspace{0.6cm}${d}_t : = {d}_t^{\text{FW}}$ and $\iter{x}{t+1} \coloneqq \iter{x}{t} + \gamma_t {d}_t$ ($\gamma_t \in [0,1]$). \Comment{\texttt{use line-search}}
    \State \textbf{else} $\iter{x}{t+1} \coloneqq$ \textsc{Trace-Opt}$(\iter{x}{t}, \nabla h(\iter{x}{t}))$ . \Comment{ \texttt{trace projections curve}}
\EndFor
\RETURN $\iter{x}{t+1}$
\end{algorithmic}
\end{algorithm}

\begin{restatable}{theorem}{algconverg}
\label{alg converg2} Let $\mathcal{P} \subseteq \mathbb{R}^n$ be a polytope with diameter $D$ and suppose that $f : P \rightarrow \mathbb{R}$ is $L$-smooth and $\mu$-strongly convex over $P$. Then, the primal gap $h(\iter{x}{t}): = h(\iter{x}{t}) - h({x}^*)$ of {\sc Shadow-CG} decreases geometrically: $$h(\iter{x}{t+1}) \leq \left(1- \min \left\{\frac{1}{2},\frac{\mu}{ LD^2}, \frac{\mu}{L}\right\} \right)h(\iter{x}{t}),$$
with each iteration of the {\sc Shadow-CG} algorithm (assuming \textsc{Trace-Opt} is a single step). Moreover, {\color{blue}the running time to compute an $\epsilon$-approximate solution is $O\left((D^2 L_O + \beta S) \frac{L}{\mu} \log (\frac{1}{\epsilon}) \right)$, where $\beta$ is the number of breakpoints of the parametric projections curve that the \textsc{Trace-Opt} method visits, $S$ is the number of calls to the shadow oracle (or in-face), and $L_O$ is the time for linear optimization.} 
\end{restatable}

{\color{blue} Before we get into the proof of this worst-case running time, we want to highlight a few implications. Note that we remove the dependence of the running time on the pyramidal width. This implies that the running time is now affine-invariant, since $\beta$ does not increase with affine transformations. For example, for a scaled simplex, $\sum_{e \neq e^\prime} x_e + Mx_{e^\prime}\leq 1$, $x_e\geq 0$, $M\gg 1$, as $M$ increases, the pyramidal width goes to zero, making the AFW running time prohibitely large, whereas the number of breakpoints remains $O(n)$. Second, since Frank-Wolfe steps skip a lot of projections, the number of projections required is much less than the worst-case in the above bound, as can be seen in our computations. Therefore, {\sc Shadow-CG} enjoys the best of both worlds. We now prove the theorem:}\\

\noindent \textit{Proof.} In the algorithm, we either enter the \textsc{Trace-Opt} procedure (in which case we have $\iter{x}{t+1} \coloneqq \textsc{Trace-Opt}(\iter{x}{t}, \nabla h(\iter{x}{t}))$, or we take a Frank-Wolfe step ($\iter{x}{t+1} \coloneqq \iter{x}{t} + \gamma_t (\iter{v}{t} - \iter{x}{t})$ for some $\gamma_t \in [0,1]$). We split the
proof of convergence into two cases depending on which case happens:
\begin{enumerate} [leftmargin = 10pt]
    \item[] \textbf{Case 1:} \textit{We enter the \textsc{Trace-Opt} procedure.} Since $\iter{x}{t+1} \coloneqq \textsc{Trace}(\iter{x}{t}, \nabla h(\iter{x}{t}))$ and $\textsc{Trace}(\iter{x}{t}, \nabla h(\iter{x}{t}))$ traces the whole curve of $g(\lambda) =  \Pi_P(\iter{x}{t} - \lambda \nabla h(\iter{x}{t}))$ until we hit the $1/L$ step size with exact line-search, it follows that $h(\iter{x}{t+1}) \leq h(g(1/L))$, and we are thus guaranteed to make at least as much progress per iteration as that of projected gradient descent (PGD) step with a fixed-step size of $1/L$. Hence we get the same standard rate $(1 - \frac{\mu} {L})$ of decrease as PGD with fixed step size $1/L$ \cite{Karimi_2016}.\\
    
    \item[] \textbf{Case 2:} \textit{We take a Frank-Wolfe step.}  Let $\gamma_t$ be the step size chosen by line-search for moving along the chosen FW direction $d_t \coloneqq \iter{v}{t} - \iter{x}{t}$, and let $\gamma_t^{\max} = 1$ be the maximum step-size that one can move along $d_t$. Using the smoothness of $f$, we have $h(\iter{x}{t+1}) \leq h(\iter{x}{t}) + \gamma_t \innerprod{\nabla h(\iter{x}{t})}{d_t} + \frac{L\gamma_t^2}{2} \|d_t\|^2$. 
    Define $\gamma_{d_t} \coloneqq \frac{\innerprod{-\nabla h(\iter{x}{t})}{d_t}} {L \|{d}_{\iter{x}{t}}\|^2}$ to be the step-size minimizing the RHS of the previous inequality. It is important to note that $\gamma_{d_t}$ is not the step-size used in the algorithm obtained from line-search. It is used to only lower bound the progress obtained from the line-search step, since, plugging in $\gamma_{d_t}$ in the smoothness inequality yields:
\begin{equation} \label{progress from smoothness1}
    h(\iter{x}{t}) - h(\iter{x}{t+1}) = h(\iter{x}{t}) - h(\iter{x}{t+1}) \geq \frac{\innerprod{-\nabla h(\iter{x}{t})}{d_t}^2} {2L \|d_t\|^2}.
\end{equation}
    We now split the proof depending on whether $\gamma_{t} < \gamma_t^{\max}$ or not:\\
    
    \begin{enumerate}
        \item[$(i)$] \textit{First, suppose that $\gamma_{t} < \gamma_t^{\max}$}. We claim that we can use the step size from $\gamma_{d_t}$ to lower bound the progress even if $\gamma_{d_t}$ is not a feasible step size (i.e. when $\gamma_{d_t} > 1$). To see this, note that the optimal solution of the line-search step is in the interior of the interval $[0, \gamma_t^{\max}]$. Define $x_\gamma \coloneqq \iter{x}{t} + \gamma d_t$. Then, because $h(x_\gamma)$ is convex in $\gamma$, we know that $\min_{\gamma \in [0,\gamma_t^{\max}]} h(x_\gamma) = \min_{\gamma \geq 0} h(x_\gamma)$ and thus $\min_{\gamma \in [0,\gamma_t^{\max}]} h(x_\gamma) = h(\iter{x}{t+1}) \leq h(x_\gamma)$ for all $ \gamma \geq 0$. In particular, $h(\iter{x}{t+1}) \leq h(x_{\gamma_{d_t}})$. Hence, we can use \eqref{progress from smoothness1} to bound the progress per iteration as follows:
    \begin{align}
    h(\iter{x}{t}) - h(\iter{x}{t+1}) &\geq \frac{\innerprod {-\nabla h(\iter{x}{t})}{d_t^{\text{FW}}}^2} {2L \|d_t^{\text{FW}}\|^2}  && \text{(using smoothness)} \\
    &\geq \frac{\innerprod{-\nabla h(\iter{x}{t})}{\frac{d_{\iter{x}{t}}^\Pi}{\|d_{\iter{x}{t}}^\Pi\|}}^2} {2L D^2} && \text{(choice of descent)} \\
    &\geq  \frac{\mu} {L D^2} h(\iter{x}{t}),
    \label{last}
\end{align}
where \eqref{last} follows Lemma \ref{primal gap optimality}.

\item[$(ii)$] \textit{We have a boundary case: $\gamma_{t} = \gamma_t^{\max}$}. We further divide this case into two sub-cases:

 \begin{enumerate}
        \item[(a)] First assume that $\gamma_{d_t} \leq \gamma_t^{\max}$ so that the step size from smoothness is feasible. Then, using the same argument as above we also have a $(1 - \frac{\mu} {LD^2})$-geometric rate of decrease.
        
        \item[(b)] Finally assume that $\gamma_{d_t} > \gamma_t^{\max}$ and $d_t = d_t^{\mathrm{FW}}$. Observe that $\gamma_{d_t} = \frac{\innerprod {-\nabla h(\iter{x}{t})}{d_t^{\mathrm{FW}}}}{L \|d_t^{\mathrm{FW}}\|^2} > \gamma_t^{\max} = 1$ implies that $\innerprod  {-\nabla h(\iter{x}{t})}{d_t^{\mathrm{FW}}} \geq L\|d_t^{\mathrm{FW}}\|_2^2$. Hence, using the fact that $\gamma_t = \gamma_t^{\max} =1$ in the smoothness inequality given previously, we have:
        \begin{align*}
            h(\iter{x}{t}) - h(\iter{x}{t+1}) &\geq  \innerprod  {-\nabla h(\iter{x}{t})}{d_t^{\mathrm{FW}}} - \frac{L}{2} \|d_t^{\mathrm{FW}}\|_2^2
         \geq \frac{h(\iter{x}{t})}{2}, 
        \end{align*}
        where the last inequality follows using the convexity of $f$ as follows:
        \begin{equation} \label{Wolfe-gap}
            h(\iter{x}{t})  \leq \innerprod {-\nabla h(\iter{x}{t})}{x^* - \iter{x}{t}} \leq \max_{{v} \in \mathcal{P}} \innerprod {-\nabla h(\iter{x}{t})}{{v} - \iter{x}{t}}.
        \end{equation}
        Hence, we get a geometric rate of decrease of 1/2.
        \end{enumerate}
        \end{enumerate}
\end{enumerate} 
The {\color{blue}iteration complexity stated} in the theorem now follows using the above rate of decrease in the primal gap.
\hfill $\square$
\newline


{\color{blue}
{\paragraph{A Practical Variant of Shadow-CG.}
In the \textsc{Shadow-CG} algorithm, we had to compute the shadow ${d}_{\iter{x}{t}}^\Pi$ every iteration to 
determine whether we take a FW step or enter \textsc{Trace-Opt}. 
With the aim of improving the computational complexity of the the algorithm, we now propose a \sge{fast} way to determine whether we can take a FW-step without computing the ${d}_{\iter{x}{t}}^\Pi$, while maintaining linear convergence. Recall from \eqref{cauchy shadow} that we have $\|\nabla h(\iter{x}{t})\| \geq \|{d}_{\iter{x}{t}}^\Pi\|$. Therefore, $\|{d}_{\iter{x}{t}}^\Pi\|$ can be approximated by $c \|\nabla h(\iter{x}{t})\|$, where $c \in (0,1)$ is a scalar.}
\begin{algorithm}[t] 
{
\caption{Shadow CG with Gradient test ({\sc Shadow-CG$^2$})}
\label{alg:prac shadow cg}
\begin{algorithmic}[1]
\INPUT Polytope $\mathcal{P} \subseteq \mathbb{R}^n$, function $f : P \to \mathbb{R}$, $x_0 \in P$, {tuning parameter $c \in (0,1)$}, and accuracy parameter $\varepsilon$.
\Statex \textit{... same as Algorithm \ref{alg:pessimistic FW2}, except changing lines 4-7 as follows...}
\State Initialize $\alpha = 1$
\setcounter{ALG@line}{3}
    \State {{\bf if} $c\|\nabla h(\iter{x}{t})\| \leq \innerprod{-\nabla h(\iter{x}{t})}{ {d}_t^{\text{FW}}}$}
    \State \hspace{0.6cm}${d}_t : = {d}_t^{\text{FW}}$ and $\iter{x}{t+1} \coloneqq \iter{x}{t} + \gamma_t {d}_t$ ($\gamma_t \in [0,1]$). \Comment{\texttt{use line-search}}
    \State \textbf{else} $\iter{x}{t+1} \coloneqq$ \textsc{Trace-Opt}$(\iter{x}{t}, \nabla h(\iter{x}{t}))$. \Comment{ \texttt{trace projections curve}}
    \RETURN $\iter{x}{t+1}$
\end{algorithmic}}
\end{algorithm}

{We now propose our {\sc Shadow-CG$^2$} algorithm, whose description is given in Algorithm \ref{alg:prac shadow cg}. The algorithm is exactly the same as \textsc{Shadow-CG}, but it now takes a FW step whenever $c \|-\nabla h(\iter{x}{t})\| \leq \innerprod{-\nabla h(\iter{x}{t})}{ {d}_t^{\text{FW}}}$. Note that the smaller $c$ is, the more the algorithm prioritizes FW steps. Thus, the scalar $c$ serves as a tuning parameter for the algorithm that is used to trade off the computational complexity of the algorithm with the descent progress achieved per iteration. This is demonstrated by the following result:
\begin{restatable}{theorem}{algconverg1}
\label{alg converg3} Let $\mathcal{P} \subseteq \mathbb{R}^n$ be a polytope with diameter $D$ and suppose that $f : P \rightarrow \mathbb{R}$ is $L$-smooth and $\mu$-strongly convex over $P$. Then, the primal gap $h(\iter{x}{t}): = h(\iter{x}{t}) - h({x}^*)$ of {\sc Shadow-CG$^2$} decreases geometrically: $$h(\iter{x}{t+1}) \leq \left(1- \min\left\{\frac{1}{2},\frac{c\mu}{ LD^2}, \frac{\mu}{L}\right\}\right)h(\iter{x}{t}),$$
with each iteration of the {\sc Shadow-CG$^2$} algorithm (assuming \textsc{Trace-Opt} is a single step), where $c \in (0,1)$ is the tuning parameter. Moreover, {\color{blue}the running time to compute an $\epsilon$-approximate solution is $O\left((\frac{D^2}{c} L_O + \beta S) \frac{L}{\mu} \log (\frac{1}{\epsilon}) \right)$, where $\beta$ is the number of breakpoints of the parametric projections curve that the \textsc{Trace-Opt} method visits, $S$ is the number of calls to the shadow oracle (or in-face), and $L_O$ is the time for linear optimization.}
\end{restatable}}

The same proof as that of Theorem \ref{alg converg2} applies, after noting that $\|\nabla h(x)\| \geq \|{d}_{\iter{x}{t}}^\Pi\|$. We would remark that when the optimum $x^*$ is constrained, $\|\nabla h(x^*)\|$ is lower bounded away from 0. Therefore, in the {\sc Shadow-CG$^2$} algorithm it is preferable to choose the scalar $c$ with small values to prioritize FW steps towards the end of the algorithm. In our computations, we did a grid-search over different values of $c$ and found that $c = 0.1$ yielded an excellent performance as we discuss in Section \ref{sec: computations - main body}. }

\paragraph{Computational Complexity of AFW.}
\textcolor{blue}{Recall that the number of steps to get an $\epsilon$-approximate solution for AFW is $O(\kappa (\frac{D}{\rho})^2\log \frac{1}{\epsilon})$, where $\kappa$, $D$, $\rho$ are the condition number of the function, the diameter of the polytope, and the pyramidal-width,  respectively. For the hypercube, $D^2 = n$ and  $\rho^2 = 1/n$ \cite{Lacoste2015}. Thus, the number of iterations to get an $\epsilon$-approximate solution for AFW over the hypercube is $O\left(\kappa n^2 \log \frac{1}{\epsilon} \right)$. Each iteration takes $O(n^2)$ time, as it is $O(n)$ for linear optimization, $O(n^2)$ for maintaining a small active set using Caratheodory's Theorem, and therefore, 
the runtime complexity to get an $\epsilon$-approximate solution using AFW for the hypercube is $O\left(\kappa n^4 \log \frac{1}{\epsilon}\right)$. For the simplex, we have $D^2 = 2$ and $\rho^2 = 1/n$ \cite{Lacoste2015}. Similar to the hypercube, the cost of each iteration of AFW over the simplex is $O(n^2)$, which imples that the runtime complexity to get an $\epsilon$-approximate solution for the simplex using AFW is $O\left(\kappa n^3 \log \frac{1}{\epsilon}\right)$.}

\begin{table}[t]
    \centering
    {\color{blue}
    \begin{tabular}{|c|c|c|c|} \hline
        & \textbf{AFW} & \textbf{Shadow-Walk} & \textbf{Shadow-CG} \\ \hline
        \textbf{Iterations} & $O(\kappa ({D}/{\rho})^2\log \frac{1}{\epsilon})$ & $O(\kappa \log {1/\epsilon})$ &  $O(\kappa D^2 \log {1/\epsilon})$ \\ \hline 
        {Hypercube $(D^2=n,\rho^2=1/n)$} & $O\left(\kappa n^2 \log 1/\epsilon\right)$ & $O\left(\kappa \log 1/\epsilon\right)$ & $O\left(\kappa n\log 1/\epsilon\right)$ \\ \hline
        {Simplex $(D^2=2,\rho^2=1/n)$} & $O\left(\kappa n \log 1/\epsilon\right)$ & $O\left(\kappa \log 1/\epsilon\right)$ & $O\left(\kappa \log 1/\epsilon\right)$ \\ \hline\hline
        \textbf{Running Time} & & & \\ \hline
        Runtime complexity for {Hypercube} & $O\left(\kappa n^4 \log 1/\epsilon\right)$ & $O\left(\kappa n^2 \log 1/\epsilon\right)$ & $O\left(\kappa n^3 \log 1/\epsilon\right)$ \\ \hline
        Runtime complexity for {Simplex} & $O\left(\kappa n^3 \log 1/\epsilon\right)$ & $O\left(\kappa n^3 \log 1/\epsilon\right)$ & $O\left(\kappa n^3 \log 1/\epsilon\right)$ \\ \hline
    \end{tabular}
    \caption{Comparison between iteration complexity and runtime complexity for AFW (away-step Frank-Wolfe), \textsc{Shadow-Walk}, and \textsc{Shadow-CG} over the simplex and hypercube, where $\kappa$ is the condition number of the function. We obtain a provable reduction of runtime complexity over the hypercube by using {\sc Shadow-Walk}, and reduction in the iteration complexity over the simplex, compared to AFW.}
    \label{tab:comparison}}
\end{table}

\textcolor{blue}{On the other hand, the number of iterations to get an $\epsilon$-approximate solution for \textsc{Shadow-Walk} and \textsc{Shadow-CG} is $O\left({\kappa}\log \frac{1}{\epsilon}\right)$ and $O\left({\kappa D^2}\log \frac{1}{\epsilon}\right)$ respectively. Every iteration of both algorithms can in the worst case enter the \textsc{Trace-Opt} procedure every iteration, which takes $O(n^2)$ and $O(n^3)$ time for the hypercube and simplex, respectively. Thus, for the hypercube, the running time complexity to get an $\epsilon$-approximate solution for \textsc{Shadow-Walk} and \textsc{Shadow-CG} is $O\left(\kappa n^2 \log \frac{1}{\epsilon}\right)$ and $O\left(\kappa n^3 \log \frac{1}{\epsilon}\right)$ respectively. For the simplex, the running time complexity to get an $\epsilon$-approximate solution for \textsc{Shadow-Walk} and \textsc{Shadow-CG} remains $O\left(\kappa n^3 \log \frac{1}{\epsilon}\right)$, though the iteration complexity reduces. These comparisons become much starker when a scaled simplex is considered instead, as its pyramidal width can be reduced to be arbitrarily small. We summarize these complexity results in Table \ref{tab:comparison}, and the runtime improvements will also be reflected in the computations in the next section.}


\section{Computations}\label{sec: computations - main body}

We implemented all algorithms in \texttt{Python} 3.5. We used \texttt{Gurobi} 9 \cite{Gurobi2020} as a black box solver for some of the oracles assumed in the paper. All experiments were performed on a 16-core machine with Intel Core i7-6600U 2.6-GHz CPUs and 256GB of main memory\footnote{This code and datasets used for our computations are available at \url{https://github.com/hassanmortagy/Walking-in-the-Shadow}.}. We are required to solve the following subproblems:
\begin{figure}[!t]
\vspace{-12 pt}
    \centering
    \hspace*{-0.25in}
    \includegraphics[scale = 0.63]{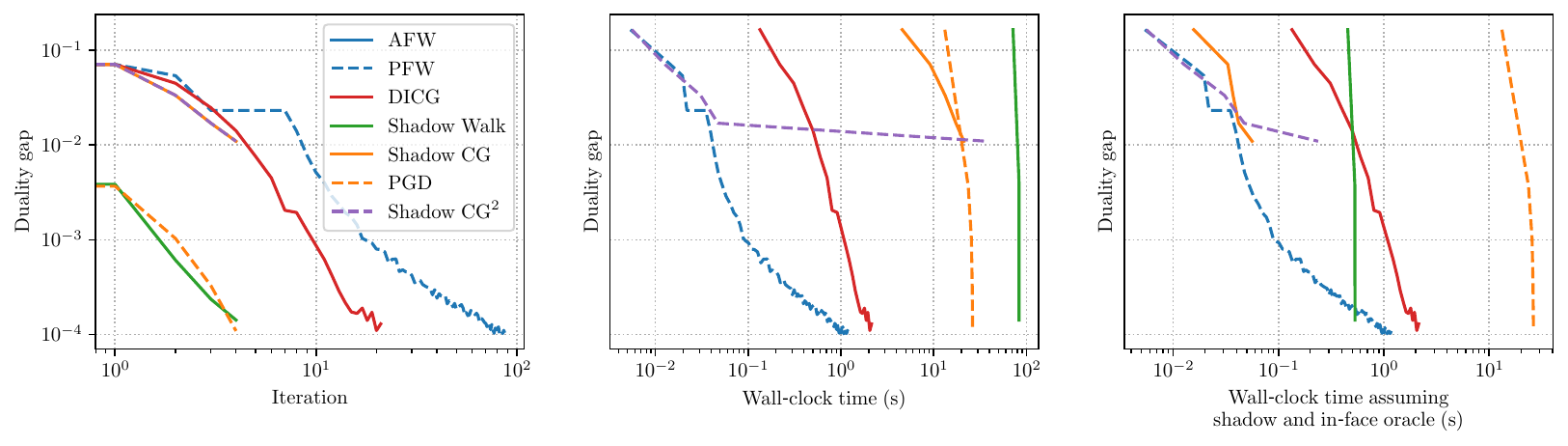}
    \caption{ Duality gap for the video co-localization problem. {Left plot compares iteration count, middle and right plots compare wall-clock time with and without access to shadow oracle}. We removed PGD from the rightmost plot for a better comparison as it takes significantly more time due to the projection step and skews the plot.}
    \label{fig: computations - video2}
\end{figure}
\begin{figure}[!t]
    \centering
        \hspace*{-0.25in}
        \hfill
        \includegraphics[scale = 0.53]{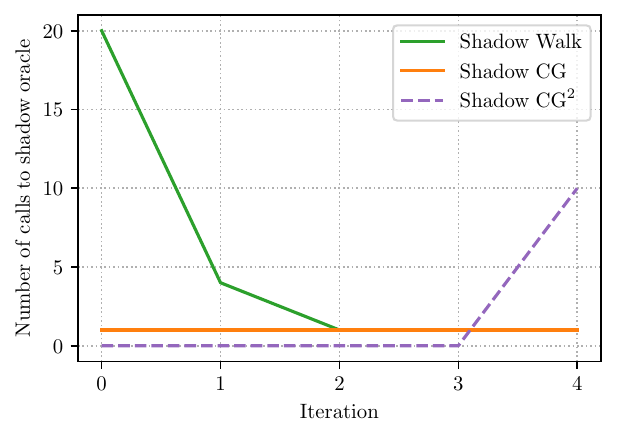}
    \includegraphics[scale = 0.53]{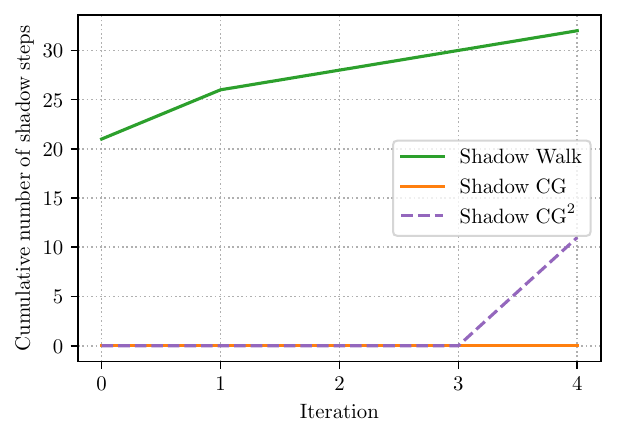}
    \includegraphics[scale = 0.53]{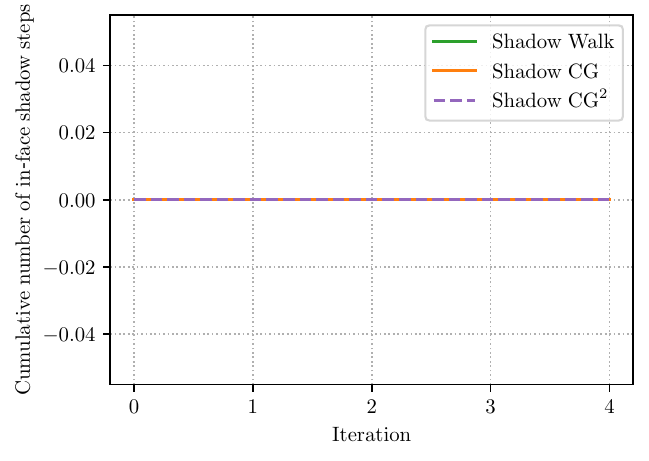}
            \hspace*{-0.2in}
    \caption{Oracle calls for the video Co-localization problem. Left plot shows the number of shadow oracles calls made per iteration by {\sc Shadow-Walk}, {\sc  Shadow-CG}, and {\sc Shadow-CG$^2$}. {Note that in the plot we see that {\sc  Shadow-CG} makes one shadow oracle call every iteration, whereas {\sc Shadow-CG$^2$} does not.} The middle plot compares the cumulative number of shadow steps taken. Right plot compares the cumulative number of in-face shadow steps taken.}
    \label{fig: computations - video3}
    \vspace{-10 pt}
\end{figure}
\begin{itemize}
    \item [(i)] \textbf{Linear optimization:} Compute ${v} = \argmin_{{x} \in \mathcal{P}} \innerprod{{c}}{{x}}$ for any ${c} \in \mathbb{R}^n$. We elaborate on the implementation of the LO subproblems later on as it is dependent on the application.
    \item [(ii)] \textbf{Shadow computation:} Given any point ${x} \in \mathcal{P}$ and direction ${w}\in \mathbb{R}^n$,  compute ${d}_{{x}}^\Pi ({w})$. For the shadow oracle, we solve the problem ${d}_{{x}}^\Pi ({w}) = \argmin_{d} \{ \|-\nabla h({x}) - d \|^2 : A_{I({x})}{d} \leq 0\}$ using \texttt{Gurobi}\footnote{{This could also be computed approximately using the \emph{matching pursuit} approach with FW of Locatello et. al \cite{locatello2017unified}, which extends FW to optimize convex function over cones (and not just polytopes). However, for our preliminary computations we chose Gurobi due its robustness and exact solutions.}} in Section \ref{sec:video}. However, in Section \ref{regression}, we use the {\sc Shadow-Simplex} algorithm (Algorithm \ref{alg: shadow simplex}) to compute the shadow and trace the projections curve. 
    \item [(iii)] \textbf{Feasibility:} For any ${x} \in \mathcal{P}$ and direction $d \in \mathbb{R}^n$, evaluate $\gamma^{\max} = \max \{\delta: {x} + \delta {d} \in \mathcal{P}\} = \min_{{j \in J(x): \innerprod{{a}_j}{d} > 0}} \frac{b_j - \innerprod{{a}_j}{x}}{\innerprod{{a}_j}{d}}$. This problem could be efficiently solved as we consider polytopes with a polynomial number of constraints.
    
     \item [(iv)] \textbf{Line-search:} Given any point ${x} \in \mathcal{P}$ and direction $d \in \mathbb{R}^n$, solve the one-dimensional problem $\min_{\gamma \in [0,\gamma^{\max}]} h({x} + \gamma d)$. To solve that problem, we utilize a bracketing method\footnote{We specifically use golden-section search that iteratively reduces the interval locating the minimum (see e.g. \cite{bertsekas1997}.} for line search.
\end{itemize}
Finally, to compute $\hat{\lambda}$ in step 5 of the {\sc Trace-Opt} algorithm, we utilize the procedure of solving a linear program outlined in Remark 2.

\subsection{Video Co-localization} \label{sec:video}
The first application we consider is the video co-localization problem from computer vision, where the goal is to track an object across different video frames. We used the YouTube-Objects dataset\footnote{We obtained the data from \url{https://github.com/Simon-Lacoste-Julien/
linearFW}.} and the problem formulation of Joulin et. al \cite{Joulin2014}. This consists of minimizing a quadratic function $h(x) = \frac{1}{2}x^\top Ax + {b}^\top  x$, where $x \in \mathbb{R}^{660}$, $A \in \mathbb{R}^{660 \times 660}$ and ${b}\in \mathbb{R}^{660}$, over a flow polytope, the convex hull of paths in a network. Our linear minimization oracle over the flow polytope amounts to computing a shortest path in the corresponding directed acyclic graph. We now present the computational results.

{We find that {\sc Shadow-CG} and {\sc Shadow-CG$^2$} have a lower iteration count than other CG variants DICG, AFW and PFW (slightly higher than PGD) for this experiment. In particular, {\sc Shadow-CG$^2$} takes slightly more iterations than \textsc{Shadow-CG}, but takes significantly less wall-clock time (i.e.,  close to CG) without assuming oracle access to shadow}, {\color{blue}i.e., when we include the time needed to compute the shadow in our running times}, thus obtaining the best of both worlds. Moreover, without assuming oracle access to the shadow computation, {\sc Shadow-CG} improves on the wall-clock time compared to PGD and {\sc Shadow-Walk} (close to CG). We also find that assuming access to shadow oracle, the {\sc Shadow-CG} algorithm outperforms the CG variants both in iteration count and wall-clock time. As is common in analyzing FW variants, we compare these different algorithms with respect to the duality gap $\innerprod{-\nabla h(\iter{x}{t})}{\iter{v}{t} - \iter{x}{t}}$ \eqref{Wolfe-gap} in Figure \ref{fig: computations - video2}.
\begin{figure}[t]
    \centering
     \hspace*{-0.25in}
    \includegraphics[scale = 0.63]{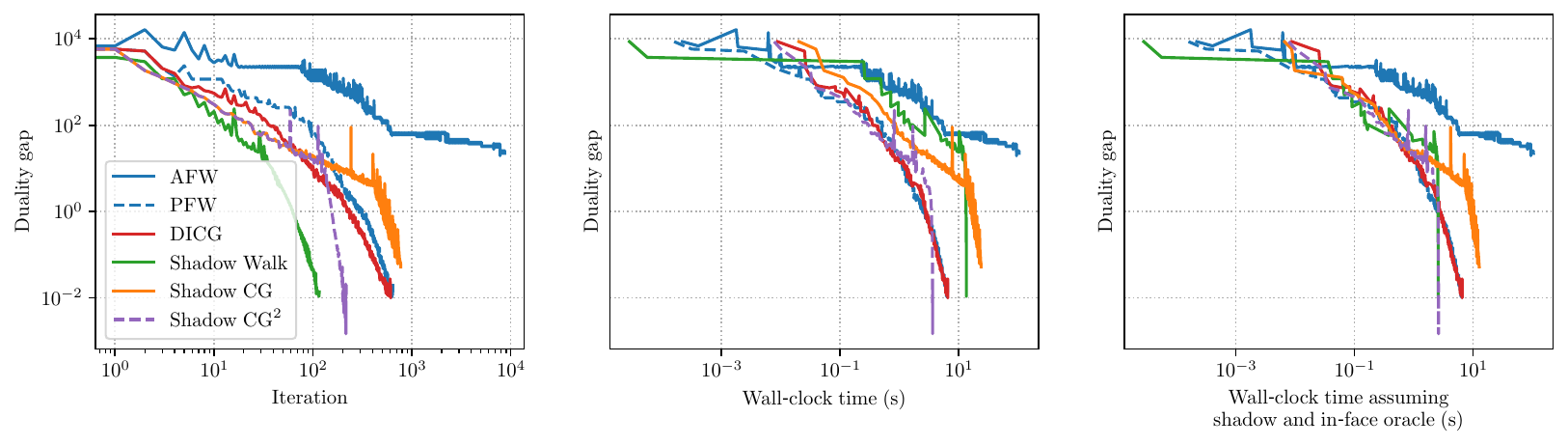}
    \caption{ Duality gaps for the Lasso regression problem: Left plot compares iteration count, middle and right plots compare wall-clock time with and without access to shadow oracle.}
    \label{fig: computations - lasso2}
\end{figure}

\begin{figure}[t]
    \centering
    \includegraphics[scale = 0.6]{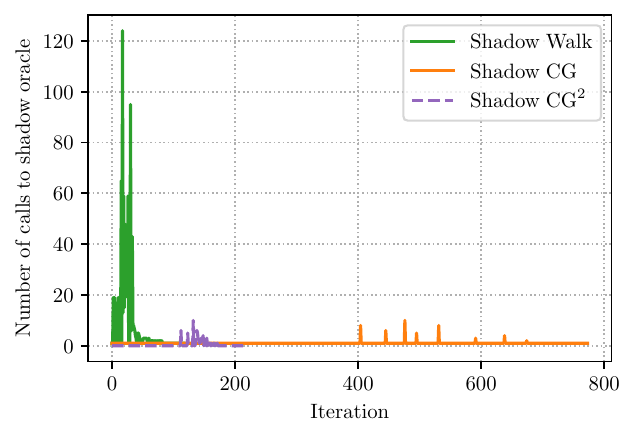}
    \hspace{30 pt}
    \includegraphics[scale = 0.6]{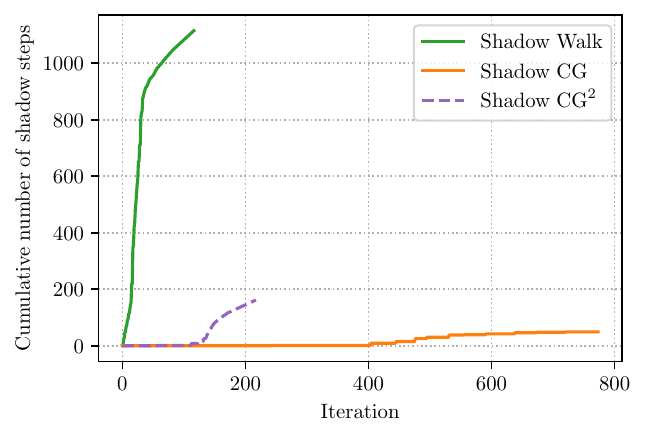}
    \caption{ Left: Comparing the number of shadow oracles calls made per iteration by {\sc Shadow-Walk}, {\sc  Shadow-CG}, and {\sc Shadow-CG$^2$} {in the Lasso regression instance}. Right: Comparing the cummulative number of shadow steps taken. {Here, we find that {\sc Shadow-Walk}, {\sc Shadow CG} and {\sc Shadow-CG$^2$} algorithms do not take any in-face steps and the number of iterations spent in the {\sc Trace-Opt} procedure matches the $O(n)$ bound we prove in Theorem \ref{simplex breakpoints}.}\\}
    \label{fig: computations - lasso3}
\end{figure}

\subsection{Lasso Regression} \label{regression}
\textcolor{blue}{The second application we consider is the Lasso regression problem, i.e.,  $\ell_1$-regularized least squares regression. This consists of minimizing a quadratic function $h(x) = \|Ax - {b}\|^2$ over a scaled $\ell_1-$ ball. The $\ell_1$ ball could be formulated as a scaled probability simplex using an extended formulation (see for example \cite{combettes2020}). We use this formulation that we can invoke the {\sc Shadow-Simplex} algorithm (Algorithm \ref{alg: shadow simplex}) to compute the shadow and trace the projections curve.}

\textcolor{blue}{We considered a random Gaussian matrix $A \in \mathbb{R}^{50\times 100}$ and a noisy measurement ${b} = A x^*$ with $x^*$ being a sparse vector with 25 entries $\pm 1$, and some additive noise. Linear minimization over the $\ell_1-$ball, simply amounts to selecting the column of $A$ with best inner product with the residual vector $Ax - {b}$. In these experiments, we observe that our algorithms {\sc Shadow-Walk} and {\sc Shadow-CG} algorithms are superior both in an iteration count and wall-clock time, and outperform all other CG variants; this is without assuming access to shadow. Moreover, assuming access to a shadow oracle, those improvements are even more pronounced. We demonstrate computationally that the number of iterations spent in the {\sc Trace-Opt} procedure matches the $O(n)$ bound we prove in Theorem \ref{simplex breakpoints} and that we do not have in-face steps. Moreover, we also find that the addition of FW steps causes the {\sc Shadow CG} and {\sc Shadow-CG$^2$} algorithms to take a significantly smaller number of shadow steps than {\sc Shadow-Walk} does.} 

\section{Conclusion} \sge{In this work, we showed connections between various descent directions for constrained minimization, such as for Frank-Wolfe direction, away-steps, pairwise steps. Further, we characterized the structure of the projections curve, and showed that it consists of two directions: the shadow, and the in-face shadow. Using this structure, we showed that the simplex and hypercube enjoy a linear number of breakpoints in the projections curve. We analyzed the continuous-time dynamics of moving along the shadow, and showed a possible discretion using {\sc Shadow-Walk}. Using the insight that Frank-Wolfe vertices are simply greedy projections, we proposed {\sc Shadow-CG} (and its practical variant {\sc Shadow-CG$^2$}) which can switch between taking the Frank-Wolfe direction and shadow-steps as needed, to obtain linear convergence. {\sc Shadow-CG} enjoys a linear rate of convergence, which depends on affine-invariant geometric constants only. These results answer the key question considered in this work about optimal descent directions for constrained minimization. 

We hope that this analysis helps inform future studies, especially those that improve oracle running times for computing the shadow, as well as machine learning to find good descent directions. Although we focused on analyzing descent directions, these ideas can be applied to accelerated proximal methods \cite{Beck2017, wirth2023acceleration} as well, simply due to our ability to unpack the projections curve through {\sc Trace-Opt}. We leave exploring such extensions to future work. Further, we showed linear bounds on the number of breakpoints for two simple polytopes, by analyzing the KKT conditions for the breakpoints. Though we can only show an $O(2^m)$ bound for the number of breakpoints in general, we conjecture that it should be $O(m)$, where $m$ is the number of facet-defining inequalities.} 

\appendix
\setcounter{footnote}{0}

\section{The \textsc{Trace-Opt} Algorithm} \label{app: trace opt}
{We now present our \textsc{Trace-Opt} Algorithm, which chains {together} consecutive short descent steps and uses line-search until it minimizes the function $h$ over the linear segments of projections curve (until we hit the step-size $1/L$, where $L$ is the smoothness constant). That way we are guaranteed progress that is at least as that of a single PGD step with fixed $1/L$ step size. One important property of {\sc Trace-Opt} is that it only requires one gradient oracle call. The complete description of the algorithm is give below in Algorithm \ref{alg: tracing PW proj opt}.
}
\setlength{\textfloatsep}{2pt}
\begin{algorithm}[t]
\caption{Tracing Projections Curve Optimally: \textsc{Trace}-\textsc{Opt}$({x}, {w}$)}
\label{alg: tracing PW proj opt}
\begin{algorithmic}[1]
\INPUT Polytope $\mathcal{P} \subseteq \mathbb{R}^n$, $L$-smooth function $h : \mathcal{P} \to \mathbb{R}$ and initialization $x \in \mathcal{P}$
\State  Let $\iter{x}{0} = x$ and $\gamma^{\text{total}} = 0.$ \Comment{\texttt{fix starting point and initialize total step size}}
\State Compute ${d}_{{x}}^\Pi \coloneqq \lim_{\epsilon \downarrow 0} \frac{\Pi_P({x}- \epsilon {\nabla h (\iter{x}{0})}) - {x}}{\epsilon}$.
\While {${d}_{{x}}^\Pi \neq {0}$} \Comment{\texttt{check if we are at endpoint}}
\If{$\innerprod{\iter{x}{0} - \gamma^{\text{total}}{\nabla h (\iter{x}{0})} - x}{{d}_{{x}}^\Pi} = 0$} \Comment{\texttt{determine if we take shadow step}}
\State Compute $\gamma^{\max} = \max \{\delta \mid {x} + \delta {d}_{{x}}^\Pi \in \mathcal{P}\}$ \Comment{ \texttt{feasibility line-search in shadow direction}}
\State Set $d = {d}_{{x}}^\Pi$. \Comment{\texttt{take shadow step}}
\Else  \State $\hat{d}_{{x}}^\Pi, \gamma^{\max} = \textsc{Trace-In-Face}(\iter{x}{0}, {x}, {\nabla h (\iter{x}{0})},\gamma^{\text{total}})$.  \Comment{\texttt{in-face step}}
\State set $d = \hat{d}_{{x}}^\Pi$. 
\EndIf
\If{$\gamma^{\text{total}} + \gamma^{\max} \geq 1/L$ and $d \neq 0$} 
\State Compute $\gamma^* \in  \argmin_{\gamma \in [0,\gamma^{\max}]} h({x} + \gamma d)$.  \Comment{ \texttt{optimality line-search}}
\State Update ${x}= {x}+ \gamma^*d$
\State \textbf{break} \Comment{\texttt{we made sufficient descent progress}}
\Else
\State Update ${x}= {x}+ \gamma^{\max}d$ and $\gamma^{\text{total}} = \gamma^{\text{total}} + \gamma^{\max}$. \Comment{\texttt{keep track of total step-size accrued}}
\State Recompute ${d}_{{x}}^\Pi \coloneqq \lim_{\epsilon \downarrow 0} \frac{\Pi_P({x}- \epsilon {\nabla h (\iter{x}{0})}) - {x}}{\epsilon}$  
\EndIf
\EndWhile
\RETURN ${x}$
\end{algorithmic}
\end{algorithm}

\section{Deriving the Structure of the Projections Curve using Complementary Pivot Theory}\label{app:structure}
{\color{blue} The structure of parametric quadratic programs has been well-studied. In this section, we show how one can reduce the problem of computing the projections curve $g(\lambda)$ using complementary pivot theory. The exposition of this section is inspired from \cite{dantzig1968complementary,murty1988linear}. 
Consider the following fundamental problem: Given real vectors $q, z \in \mathbb{R}^n$ and matrix $M \in \mathbb{R}^{n \times n}$ find vectors $w$ and $z$ that satisfy the conditions:
\begin{equation} \label{lc}
    \begin{aligned}
        &w  = q + MZ,\\
        &z^\top w = 0,\\
        &w,z \geq 0.
    \end{aligned}
    \tag{LC}
\end{equation}

The \emph{quadratic programming problem} is typically stated as 
\begin{equation} \label{qp}
    \begin{aligned}
        \min~ &\frac{1}{2} x^\top Dx + c^\top x\\
        \text{s.t. } &A x \leq b,\\
        & x \geq 0,
    \end{aligned}
    \tag{QP}
\end{equation}
where $D \in \mathbb{R}^{n \times n}$ is symmetric and positive semi-definite. Notice that the problem of evaluating $g(\lambda)$ for any $\lambda \in \mathbb{R}_+$ is a special case of \eqref{qp}. Indeed,
\begin{equation} \label{lcp3}
    g(\lambda) = \begin{aligned}
        \argmin~ &\frac{1}{2} \|x_0 - \lambda \nabla h (x_0)- x\|_2^2 \\
        \text{s.t. } &A x \leq b,\\
        & x \geq 0,
    \end{aligned}
    = \begin{aligned}
        \argmin~ &\frac{1}{2} \|x\|_2^2 + \innerprod{x_0 - \lambda \nabla h (x_0)}{x}\\
        \text{s.t. } &A x \leq b,\\
        & x \geq 0.
    \end{aligned}
\end{equation}
Therefore, by setting $D = I$ and $c = x_0 - \lambda \nabla h (x_0)$ in \eqref{qp}, we recover the problem of computing $g(\lambda)$. We next show how can reduce \eqref{qp} to \eqref{lc}. 

For any quadratic programming problem \eqref{qp}, define $u$ and $v$ by
\begin{equation}\label{lcp1}
    u = Dx + c + A^\top y, \qquad v = b - Ax.
\end{equation}
Using the KKT conditions, we know that a vector $\tilde{x}$ yields a minimum of \eqref{qp} if only if there exists a vector $\tilde{y}$ and vectors $\tilde{u}$, $\tilde{v}$ given by (11) for $x = \tilde{x}$ in \eqref{lcp1} satisfying:
\begin{equation}
    \begin{aligned}
        &\tilde{x} \geq0, \quad \tilde{u}\geq 0, \quad \tilde{v}\geq 0, \quad \tilde{y}\geq 0,\\
        &\quad \qquad \tilde{x}^\top \tilde{u} = 0, \quad {\tilde{y}^\top \tilde{v}} = 0.
    \end{aligned}
\end{equation}

Therefore, the problem of solving a quadratic program leads to a search for solution of the system:
\begin{equation*}
    \begin{aligned}
    &u = Dx + c +A^\top y, \quad  v = b- Ax, \quad  x^\top u + y^\top v = 0 \\
    & \qquad \qquad \qquad x, y \geq 0, \quad    u, v \geq 0 
    \end{aligned} \label{lcp2}
\end{equation*}
In particular, by setting
$$w = \left(\begin{array}{c}
     u  \\
     v 
\end{array}\right), \quad q = \left(\begin{array}{c}
     c  \\
     -b 
\end{array}\right)
, \quad  M = \left(\begin{array}{cc}
     D & -A^\top \\
     A & 0 
\end{array}\right),
$$
one can reduce the quadratic programming problem to the linear complemenatrity problem \eqref{lc}.

There exists an iterative algorithm called the \emph{complementary pivot algorithm} and is very similar to the Simplex method, which can solve \eqref{lc} under some conditions on the matrix $M$. If $M$ is positive-semi definite for instance then the \emph{complementary pivot algorithm} can solve \eqref{lc} \cite{murty1988linear}. In our case, 
$$M = \left(\begin{array}{cc}
     D & -A^\top \\
     A & 0 
\end{array}\right),$$
which can easily be verified to be positive semi-definite. Indeed, for any $x \in \mathbb{R}^n$
\begin{align*}
x^\top Mx &= \left(\begin{array}{c}
     x_1  \\
     x_2
\end{array}\right)M = \left(\begin{array}{cc}
     D & -A^\top \\
     A & 0 
\end{array}\right)\left(\begin{array}{c}
     x_1  \\
     x_2
\end{array}\right) = x_1^\top D x_1 +x_2^\top A x_1 - x_1^\top A x_2 \\
&= x_1^\top D x_1 \geq 0,
\end{align*}
since $D$ is assumed to be positive semi-definite. Therefore, the key takeaway is that we can solve any quadratic problem using the {complementary pivot algorithm}.

Finally, there is a parametric version of \eqref{lc} defined as follows:
\begin{equation} \label{plc}
    \begin{aligned}
        &w - Mz = q(\lambda) \coloneqq s + \lambda \bar{s}\\
        &z^\top w = 0\\
        &w,z \geq 0
    \end{aligned}
    \tag{PLC}.
\end{equation}

Moreover, there exists an algorithm, the \emph{parametric complementary pivot algorithm} that can solve \eqref{plc} for all values of $\lambda$ \cite{murty1988linear}, i.e. it traces the piecewise linear solution path generated by considering all values of $\lambda$. So, by setting 
$$w = \left(\begin{array}{c}
     u  \\
     v 
\end{array}\right), 
\quad  M = \left(\begin{array}{cc}
     D & -A^\top \\
     A & 0 
\end{array}\right)
s = \left(\begin{array}{c}
     x_0  \\
     -b 
\end{array}\right), \quad \bar{s} = \left(\begin{array}{c}
     -\nabla h(x_0) \\
     -0
\end{array}\right),
$$
in \eqref{plc} we can solve \eqref{lcp2} for any $\lambda$. Therefore, one can also trace the projections curve using the parametric complementary pivot algorithm.}

\section{Missing proofs for Section \ref{sec:structure}} \label{app:benchmarking}
{\color{blue}
\subsection{Proof of Theorem \ref{hypercube breakpoints}} \label{app: proof of  hypercube breakpoints}

\hypercubebreakpoints*

\noindent \textit{Proof.}
    For notational brevity,  we let $d_{x_{i-1}}^\Pi \coloneqq d_{x_{i-1}}^\Pi (w)$ for all $i \geq 1$.
    First, note that for the hypercube, the shadow could be computed in closed form in $O(n)$ time, using
    \begin{equation} \label{hypercube shadow eqn1}
        [d_{x}^\Pi({w})]_i = \begin{cases}
        0 &\text{if } i \in I_1 \text{ and } [-{w}]_i \leq 0;\\
        0 &\text{if } i \in I_2 \text{ and } [-{w}]_i \geq 0;\\
        [-{w}]_i &\text{otherwise}.
        \end{cases}
    \end{equation}
    This is because, $d_{x_{i-1}}^\Pi$ is given by a projection onto the tangent cone at $x$ given by $T_{\mathcal{H}_n}(x) =  \{d \in \mathbb{R}^n \mid d_{i} \geq 0 \text{ if } x_i = 0, d_i \leq 0 \text{ if } x_i = 1\}$: $d_{x}^\Pi({w}) = \argmin_{d \in T_{\mathcal{H}_n}(x)} \| - {w} -  d \|^2$, and therefore,
    $$ d_{x}^\Pi({w}) = \left\{\begin{aligned}
        &\argmin_{d \in \mathbb{R}^n}  &\| - {w} -  d \|^2 \\
        &\text{subject to} & d_i \geq 0 && \text{ if } i \in I_1\\
        & & -d_i \geq 0 && \text{ if } i \in I_2
    \end{aligned}\right\};$$
    
    We next show that $\innerprod{x_0- \lambda_{i-1}^{-} w - x_{i-1}}{d_{x_{i-1}}^\Pi} = 0$ for all $i \geq 1$ by (strong) induction on $i$, which using Theorem \ref{invariance} implies that we only take shadow steps in $\textsc{Trace} (x_0, x_{i-1}, w, \lambda_{{i-1}}^-)$. 
    For the base case when $i = 1$, we know using Remark 1 (a) that $\innerprod{x_0- \lambda_{0}^{-} w - x_{0}}{d_{x_{i-1}}^\Pi} = 0$ and the first segment of the projections curve could be obtained using a maximal shadow-step. This establishes the base case. For the inductive step, assume that $\innerprod{x_0- \lambda_{j}^{-} w - x_{j}}{d_{x_{j}}^\Pi } = 0$ for all $j \leq i-1$ and we show that
    \begin{equation} \label{zero inner product cube}
        \innerprod{x_0- \lambda_{i}^{-} w - x_{i}}{d_{x_{i}}^\Pi}  = \sum_{k = 1}^n [x_0- \lambda_{i}^{-} w - x_{i}]_k \times [d_{x_{i}}^\Pi]_k = 0.
    \end{equation}
    In particular, we will show that for any $k \in [n]$ where $[d_{x_{i}}^\Pi]_k = [-w]_k \neq 0$, then $[x_0- \lambda_{i}^{-} w - x_{i}]_k$ = 0, which proves \eqref{zero inner product cube}.

    By the induction hypothesis and Theorem \ref{invariance}, we know that have only taken shadow steps until the breakpoint $x_i$, and thus
    \begin{equation} \label{induction hyp cube}
        x_i = x_0 + \sum_{j = 0}^{i-1} x_0 + (\lambda_{j+1}^- - \lambda_{j}^+) d_{x_{j}}^\Pi  = x_0 + \sum_{j = 0}^{i-1} (\lambda_{j+1}^- -\lambda_{j}^-) d_{x_{j}}^\Pi ,
    \end{equation}
 where the second equality uses the fact that $\lambda_{j}^- = \lambda_{j}^+$ for all $j \leq i-1$ using Remark 2 since we did not take any in-face steps by induction. Furthermore, we claim that $[d_{x_{j}}^\Pi ]_k = [-w]_k$ for all $j \leq i - 1$. Suppose for a contradiction, that our claim is false, i.e. $[d_{x_{j}}^\Pi ]_k = 0$ for some $j  \leq i-1$. Then by the fact that we have only take shadow steps by induction and \eqref{hypercube shadow eqn1}, it follows that $[d_{x_{l}}^\Pi ]_k = 0$ for  all $l$ satisfying $j \leq l \leq i-1$. Therefore,
 $$  [x_i]_k = [x_0]_k + \sum_{j = 0}^{i-1} (\lambda_{j+1}^- -\lambda_{j}^-) [d_{x_{j}}^\Pi ]_k = [x_j]_k + \sum_{l = j}^{i-1} (\lambda_{l+1}^- -\lambda_{l}^-) [d_{x_{l}}^\Pi ]_k = [x_j]_k, $$
 where we used the fact that $[x_j]_k = x_0 + \sum_{l = 0}^{j-1} (\lambda_{l+1}^- -\lambda_{l}^-) [d_{x_{l}}^\Pi ]_k$ by induction. Since, $[x_i]_k = [x_j]_k$, this implies that $[x_i]_k$ and thus $[d_{x_{i}}^\Pi]_k = 0$, which is a contradiction to our assumption that $[d_{x_{i}}^\Pi]_k = [-w]_k \neq 0$.
 
Thus using the previous claim and \eqref{induction hyp cube} we have
\begin{align*}
    [x_0- \lambda_{i}^{-} w - x_{i}]_k &= [x_0]_k- \lambda_{i}^{-} [w]_k - [x_0]_k - \sum_{j = 0}^{i-1} (\lambda_{j+1}^- -\lambda_{j}^-) [d_{x_{j}}^\Pi ]_k\\
    &= [x_0]_k- \lambda_{i}^{-} [w]_k - [x_0]_k - \sum_{j = 0}^{i-1} (\lambda_{j+1}^- -\lambda_{j}^-) [-w]_k \\ 
    &= \lambda_{i}^{-} [-w]_k -  (\lambda_{i}^- -\lambda_{0}^-) [-w]_k\\
     &= \lambda_{i}^{-} [-w]_k -  \lambda_{i}^-  [-w]_k && (\lambda_{0}^- = 0) \\
     &= 0,
\end{align*}
which proves \eqref{zero inner product cube} and completes the induction. We have proven that in \textsc{Trace}, we only take shadow steps. In such steps, we move maximally along the shadow, and so in every iteration a coordinate of $x_i$ becomes tight (i.e $[x_j]_k = 0$ or $[x_j]_k = 1$). Since, once coordinate is tight, it remains tight as proven earlier, and we are only taking maximal shadow steps, it will take $O(n)$ iterations until $d_{x_{j}}^\Pi  = {0}$, which implies that we reached the endpoint of the projections curve using Theorem \ref{invariance}.
\hfill $\square$

\subsection{Proof of Lemma \ref{simplex shadow}} \label{app: proof of shadow simplex}

\simplexshadow*

\noindent \textit{Proof.}
Recall that
    $$ d_{x}^\Pi({w}) = \argmin_{d \in T_{\Delta_n}(x)} \| - {w} -  d \|^2,$$ where $T_{\Delta_n}(x)$ is the tangent cone at $x$ given by $\{d \in \mathbb{R}^n \mid d_{i} \geq 0 \text{ if } x_i = 0, \sum_{i=1}^nd_i = 0\}$. Let $I \coloneqq \{i \in [n] \mid x_i = 0\}$ and $J \coloneqq \{i \in [n] \mid x_i > 0\}$. Therefore,
    \begin{equation} \label{shadow simplex 1}
        d_{x}^\Pi({w}) = \left\{\argmin_{d \in \mathbb{R}^n}  \| - {w} -  d \| \mid  \sum_{i=1}^n d_i = 0,\, d_i \geq 0 \forall i \in I \right\}.
    \end{equation}
Let $I^* = \{i \in I \mid d_{x}^\Pi({w})_i = 0\}$ be the index-set of coordinates where the shadow $d_{x}^\Pi({w})_i = 0$. As in Algorithm \ref{alg: shadow simplex}, for simplicity of notation, we let ${g} \coloneqq - {w}$. Then, \eqref{shadow simplex 1} is equivalent to\footnote{See Lemma 2 in \cite{moondra2021reusing} about reducing the optimization problem to the optimal face if this is not clear.}
\begin{equation} \label{shadow simplex 2}
        d_{x}^\Pi({w}) = \left\{\argmin_{d \in \mathbb{R}^n}  \| {g} -  d \|^2 \mid \sum_{i=1}^n d_i = 0, \, d_i = 0 \text{ for all } i \in I^*\right\}.
    \end{equation}

Let $C$ denote the feasible region of \eqref{shadow simplex 2}. Furthermore, let $r$ be a vector defined as follows:
$${r}_i =
\begin{cases}
    0 & \text{if } i \in I^*;\\
    1 & \text{otherwise}.
\end{cases}$$
Assuming we know $I^*$, we can solve \eqref{shadow simplex 2} in closed form as follows since it is just a Euclidean projection onto $\mathbbm{1}^\top d = 0$ with the restriction that $d_i =0$ for all $i \in I^*$.
\begin{equation} \label{shadow simplex 3}
       d \coloneqq {g} \odot {{r}} - \frac{\innerprod{{g}}{{{r}}}}{\|{{r}}\|^2} {{r}} = d_{x}^\Pi({w}).
    \end{equation}

We finally claim that the ${u}_{x}$ computed by Algorithm \ref{alg: shadow simplex} satisfies 
\begin{equation} \label{shadow simplex 4}
    {u}_{x}= {r}.
\end{equation}
which complete the proof. We prove \eqref{shadow simplex 4} by showing that ${I}' \coloneqq \{i \in I \mid [{u}_{x}]_i = 0\} = I^*$. By the initialization of ${u}$ in line 2 of the Algorithm, we have that $ [{u}_{x}]_i= 1$ for all $i \in J$. It remains to show that $ [{u}_{x}]_i = 1$ for all $I \setminus I^*$. To do that, it suffices to prove that for any $i,j \in I$, if ${g}_i \geq {g}_j$, then $[d_{x}^\Pi({w})]_i \geq [d_{x}^\Pi({w})]_j$, since in Algorithm \ref{alg: shadow simplex} we sort $I$ to be consistent with ordering ${g}$ in decreasing order, and then searching for $I^*$ greedily based on that order. 
Suppose on the contrary that ${g}_i \geq {g}_j$ for some $i,j \in I$ but  $[d_{x}^\Pi({w})]_i < [d_{x}^\Pi({w})]_j$ for $i < j$. Let $\tilde{d}_{x}^\Pi({w})$ be the direction obtained by exchanging $[d_{x}^\Pi({w})]_i$ and $[d_{x}^\Pi({w})]_j$. Then, by construction, we have that $\tilde{d}_{x}^\Pi({w})$ is feasible \eqref{shadow simplex 1}. Moreover,
\begin{align*}
    \|{g} - d_{x}^\Pi({w})\|^2 - \|{g} - \tilde{d}_{x}^\Pi({w})\|^2 &= \left({g}_i - [d_{x}^\Pi({w})]_i \right)^2  + \left({g}_j - [d_{x}^\Pi({w})]_j \right)^2 - \left({g}_i - [\tilde{d}_{x}^\Pi({w})]_i \right)^2 - \left({g}_j - [\tilde{d}_{x}^\Pi({w})]_j \right)^2\\
    &= -2({g}_i)([d_{x}^\Pi({w})]_i)  -2({g}_j)([d_{x}^\Pi({w})]_j) +
    2({g}_i)([d_{x}^\Pi({w})]_j )
    +2({g}_j)([d_{x}^\Pi({w})]_i )\\
    &= 2([d_{x}^\Pi({w})]_j - [d_{x}^\Pi({w})]_i)({g}_i - {g}_j)\\
    &\geq 0,
\end{align*}
which contradicts the optimality of $d_{x}^\Pi({w})$.

Finally, regarding the running times, Algorithm \ref{alg: shadow simplex} requires an initial sort of ${g}$, which requires $O(n\log n)$ time, and then running the for-loop in line 5 which has at most $n$ iterations, where in every iteration we do the computation in \eqref{shadow simplex 3}, which takes $O(n)$ time. This gives a total running time of $O(n\log n + n^2)$ as claimed.
\hfill $\square$

\subsection{Proof of Theorem \ref{simplex breakpoints}} \label{app: proof of  simplex breakpoints}

\simplexbreakpoints*

\noindent \textit{Proof.}
    For notational brevity,  we let $d_{x_{i-1}}^\Pi \coloneqq d_{x_{i-1}}^\Pi (w)$ for all $i \geq 1$ and let ${g} \coloneqq -w$ (similar to Algorithm \ref{alg: shadow simplex}). We will prove this lemma by first showing that $\innerprod{x_0 + \lambda_{i-1}^{-}g - x_{i-1}}{d_{x_{i-1}}^\Pi} = 0$ for all $i \geq 1$. At the start point of the curve when $i = 1$, we know using Remark 1 (a) that $\innerprod{x_0+\lambda_{0}^{-} g - x_{0}}{d_{x_{0}}^\Pi} = 0$ and the first segment of the projections curve could be obtained using a maximal shadow-step. Therefore, using Theorem \ref{invariance}, we have that $x_1 = x_0 + \lambda_1^- d_{x_{0}}^\Pi  = x_0 + (\lambda_1^- - \lambda_0^-) d_{x_{0}}^\Pi$. Now, consider the next segment of the projections curve starting at $x_1$. We will now prove that we again take a maximal shadow step to obtain the next segment of the curve, by showing that
    \begin{align*}
        \innerprod{x_0 + \lambda_{1}^{-} g - x_{1}}{d_{x_{1}}^\Pi}  =  \innerprod{x_0 + (\lambda_1^- - \lambda_0^-)  g - x_0 - (\lambda_1^- - \lambda_0^-) d_{x_{0}}^\Pi}{d_{x_{1}}^\Pi} = (\lambda_1^- - \lambda_0^-) \innerprod{g -d_{x_{0}}^\Pi}{d_{x_{1}}^\Pi} = 0,
    \end{align*}
    i.e., by proving that $\innerprod{g -d_{x_{0}}^\Pi}{d_{x_{1}}^\Pi}$ = 0. To that end, using Algorithm \ref{alg: shadow simplex}, we know that
    \begin{equation} \label{simplex breakpoint eq 2}
    d_{x_{0}}^\Pi = g \odot {u}_{x_0} - \frac{\innerprod{{g}}{{u}_{x_0}}}{\|{u}_{x_0}\|^2} {u}_{x_0} \quad \text{and} \quad  d_{x_{1}}^\Pi = g \odot {u}_{x_1} - \frac{\innerprod{{g}}{{u}_{x_1}}}{\|{u}_{x_1}\|^2} {u}_{x_1}.
 \end{equation}
    Let $\mathrm{supp}({u}_{x_1}) \coloneqq \{i \in [n] \mid [{u}_{x_1}]_i = 1\}$. Then, since $x_1$ is obtained by moving maximally along $ d_{x_{0}}^\Pi$ from $x_0$, we also know that\footnote{It is clear that $\mathrm{supp}({u}_{x_1}) \subseteq \mathrm{supp}({u}_{x_0})$ since $x_0$ lies in a higher dimensional face of the simplex containing $x_0$ ($x_1$ has more zero coordinates than $x_0$), and so computing the shadow at $x_1$ is a more constrained optimization problem than computing it at $x_0$. However, equality cannot hold since $x_1$ is obtained by moving maximally along the shadow. In other words, if $\mathrm{supp}({u}_{x_1}) = \mathrm{supp}({u}_{x_0})$, then $ d_{x_{0}}^\Pi = d_{x_{1}}^\Pi$, and since $d_{x_{1}}^\Pi$ is a feasible direction at $x_1$ by definition, we contradicts that $x_1$ is obtained by moving maximally along $ d_{x_{0}}^\Pi$ from $x_0$.}
    \begin{equation} \label{simplex breakpoint eq 3}
\mathrm{supp}({u}_{x_1}) \subset \mathrm{supp}({u}_{x_0}) \implies u_{x_1} \odot u_{x_0} = u_{x_1}, 
\innerprod{u_{x_1}}{u_{x_0}} = \|u_{x_1}\|^2,
\end{equation}
which further implies that $\innerprod{{u}_{x_0}}{ g \odot {u}_{x_1}} = \innerprod{{u}_{x_1}}{ g \odot {u}_{x_0}}= \innerprod{{u}_{x_1}}{ g}$. Putting everything together:
\begin{align}
\innerprod{g -d_{x_{0}}^\Pi}{d_{x_{1}}^\Pi} &=  \innerprod{g - g \odot {u}_{x_0} + \frac{\innerprod{{g}}{{u}_{x_0}}}{\|{u}_{x_0}\|^2} {u}_{x_0} }{ g \odot {u}_{x_1} - \frac{\innerprod{{g}}{{u}_{x_1}}}{\|{u}_{x_1}\|^2} {u}_{x_1} } \nonumber \\
& = \innerprod{g - g \odot {u}_{x_0}}{ g \odot {u}_{x_1} - \frac{\innerprod{{g}}{{u}_{x_1}}}{\|{u}_{x_1}\|^2} {u}_{x_1} }+ \frac{\innerprod{{g}}{{u}_{x_0}}}{\|{u}_{x_0}\|^2} \innerprod{{u}_{x_0}}{ g \odot {u}_{x_1} - \frac{\innerprod{{g}}{{u}_{x_1}}}{\|{u}_{x_1}\|^2} {u}_{x_1} } \label {simplex breakpoint eq 5}\\
&= \| g \odot {u}_{x_1}\|^2  - \frac{\innerprod{{g}}{{u}_{x_1}}^2}{\|{u}_{x_1}\|^2} - \| g \odot {u}_{x_1}\|^2  +  \frac{\innerprod{{g}}{{u}_{x_1}}^2}{\|{u}_{x_1}\|^2} = 0 \label {simplex breakpoint eq 6}
\end{align}
where we used \eqref{simplex breakpoint eq 3} in \eqref{simplex breakpoint eq 5} and \eqref{simplex breakpoint eq 6}. Note that the only property we used to prove that $\innerprod{g -d_{x_{0}}^\Pi}{d_{x_{1}}^\Pi}= 0$ is that $\mathrm{supp}({u}_{x_1}) \subset \mathrm{supp}({u}_{x_0})$; this will be important for the remainder of the proof.

Therefore, using Theorem \ref{invariance}, we know that the next breakpoint:
    \begin{equation} \label{simplex breakpoint eq 4}
         x_2 = x_1 + (\lambda_2^- - \lambda_1^-)d_{x_{1}}^\Pi  = x_0 +(\lambda_2^- -\lambda_1^-)d_{x_{1}}^\Pi + (\lambda_1^- - \lambda_0^-) d_{x_{0}}^\Pi,
    \end{equation}
and thus $\mathrm{supp}({u}_{x_2})  \subset \mathrm{supp}({u}_{x_1}) \subset \mathrm{supp}({u}_{x_0})$. Then, using \eqref{simplex breakpoint eq 4} and the exact same calculation as above, we obtain
    \begin{align*}
        \innerprod{x_0 + \lambda_{2}^{-} g - x_{2}}{d_{x_{2}}^\Pi} &=  
        \innerprod{x_0 + \lambda_{2}^{-} g 
        -x_0 - (\lambda_2^- -\lambda_1^-)d_{x_{1}}^\Pi - (\lambda_1^- - \lambda_0^-) d_{x_{0}}^\Pi}{d_{x_{2}}^\Pi}\\
        &= (\lambda_1^- - \lambda_0^-) \underbrace{\innerprod{g -d_{x_{0}}^\Pi}{d_{x_{2}}^\Pi}}_{ \substack{{= 0 \text{ since }} \\ { \mathrm{supp}({u}_{x_2}) \subset \mathrm{supp}({u}_{x_0})}}}   + (\lambda_2^- - \lambda_1^-) \underbrace{\innerprod{g -d_{x_{1}}^\Pi}{d_{x_{2}}^\Pi}}_{\substack{{= 0 \text{ since }} \\ { \mathrm{supp}({u}_{x_1}) \subset \mathrm{supp}({u}_{x_0})}}}= 0.
    \end{align*}

Continuing inductively in this fashion, we have $\mathrm{supp}({u}_{x_i})  \subset \mathrm{supp}({u}_{x_{i-1}}) \subset \dots \subset \mathrm{supp}({u}_{x_0})$ for any breakpoint $i >2$, and thus
 \begin{align*}
        \innerprod{x_0 + \lambda_{i}^{-} g - x_{i}}{d_{x_{i}}^\Pi} &=  
        \innerprod{x_0 + \lambda_{i}^{-} g 
        -x_0 - \sum_{j = 0}^{i-1} (\lambda_{j+1}^- -\lambda_{j}^-) d_{x_{j}}^\Pi}{d_{x_{i}}^\Pi} = \sum_{j=0}^{i-1} (\lambda_{j+1}^- -\lambda_{j}^-){\innerprod{g -d_{x_{j}}^\Pi}{d_{x_{i}}^\Pi}} = 0,
 \end{align*}
 where the last inequality follows from the fact that $\mathrm{supp}({u}_{x_j}) \subset \mathrm{supp}({u}_{x_i})$. This proves that $\innerprod{x_0 + \lambda_{i-1}^{-}g - x_{i-1}}{d_{x_{i-1}}^\Pi} = 0$ for all $i \geq 1$, i.e., we only take shadow steps in \textsc{Trace}. Moreover, the number of breakpoints is at most $n$, since $\mathrm{supp}({u}_{x_i})  \subset \mathrm{supp}({u}_{x_{i-1}}) \subset \dots \subset \mathrm{supp}({u}_{x_0})$, and thus it will take $n$ iterations until $|\mathrm{supp}({u}_{x_i})| = 1$, at which point $d_{x_{i}}^\Pi = {0}$, so that we reached the endpoint of the projections curve by Theorem \ref{invariance}.
\hfill $\square$  
}

\section{Missing Proofs for Section \ref{sec:descentdirections}} \label{app: missing descent direction proofs}
\subsection{Proof of Lemma \ref{best local direction}} \label{app: proof of best local direction}

\bestlocaldirection*

\noindent \textit{Proof.}
    We prove the result using first-order optimality of projections. First, observe that using Moreau's decomposition theorem we can uniquely decompose $-\nabla h(x) = {p} + {d}_{{x}}^\Pi $ such that $\innerprod{{d}_{{x}}^\Pi}{{p}} = 0$, where ${p}$ is the projection of $\nabla h(x)$ onto $N_P(x)$. Therefore,  $\innerprod{-\nabla h(x)}{d_{x}^\Pi} = \|d_{x}^\Pi \|^2$, which gives the first equality in \eqref{best local direction equation}.

We will now show that
\begin{equation} \label{best direction}
\innerprod{{d}_{x}^\Pi}{y} \geq \innerprod{-\nabla h(x)}{y}.
\end{equation}
To do that, we recall the first-order optimality condition for $g(\lambda) = \Pi_{P}(x - \lambda \nabla h(x))$ for $\lambda>0$:  
\begin{equation*}
\innerprod{g(\lambda) - x + \lambda \nabla h(x)}{{z} - g(\lambda)} \geq 0 \quad {\forall \; {z} \in \mathcal{P}.}
\end{equation*}
Using Theorem \ref{invariance}, we know that there exists some scalar $\lambda^{-}$ such that $g(\lambda) = x + \lambda d_{x}^\Pi$ for any $ 0 < \lambda < \lambda^{-}$. Hence, for any such $\lambda \in (0,\lambda^{-})$, the first-order optimality condition becomes:
\begin{equation} \label{foo optimality}
 \innerprod{x + \lambda d_{x}^\Pi - x + \lambda \nabla h(x)}{{z} - x - \lambda d_{x}^\Pi} = \lambda \innerprod{ d_{x}^\Pi + \nabla h(x)}{{z} - x - \lambda d_{x}^\Pi} \geq 0,
\end{equation}
for all  ${z} \in \mathcal{P}$. Note that the above equation holds for any ${z} \in \mathcal{P}$ and $\lambda \in (0,\lambda^{-})$. 

Since, $x + \gamma y \in \mathcal{P}$, it follows that $x + \bar{\lambda}y$ is also in $P$, where $\bar{\lambda} = \min \{\lambda^{-}/2, \gamma\}$. Thus, since $\bar{\lambda} \in (0,\lambda^{-})$ and $x + \bar{\lambda} y \in \mathcal{P}$, we can plug in $\bar{\lambda}$ for $\lambda$ and $x + \bar{\lambda} y$ for ${z}$ in \eqref{foo optimality} to obtain $\bar{\lambda}^2 \innerprod{ d_{x}^\Pi + \nabla h(x)}{ y - d_{x}^\Pi} \geq 0$. Thus, using the fact that $\innerprod{-\nabla h(x)}{d_{x}^\Pi} = \|d_{x}^\Pi \|^2$, this implies
\begin{equation*}
    \innerprod{ d_{x}^\Pi}{y} \geq \|d_{x}^\Pi \|^2 + \innerprod{ - \nabla h(x)}{y - d_{x}^\Pi} = \innerprod{ - \nabla h(x)}{y}
\end{equation*}
as claimed in \eqref{best direction}.

We can now complete the proof using \eqref{best direction} as follows
\begin{align*} \label{deriv performance1}
     \innerprod{-\nabla h(x)}{\frac{{d}_{x}^\Pi}{\|{d}_{x}^\Pi\|}}^2  =  \|{d}_{x}^\Pi \|^2 
     \geq \innerprod {{d}_{x}^\Pi}{\frac{y}{ \|y\|}}^2
     \geq \innerprod {-\nabla h(x)}{\frac{y}{ \|y\|}}^2,
\end{align*}
where we used Cauchy-Schwartz in the first inequality
and \eqref{best direction} in the second inequality.
  \hfill $\square$

\subsection{Proof of Lemma \ref{primal gap optimality}} \label{sec:app primal gap proof}


\primalgapoptimality*

  \noindent \textit{Proof.}
    First assume that $\|{d}_{{x}}^\Pi\| = {0}$ so that ${d}_{{x}}^\Pi = {0}$. Then, using Remark 1 $(a)$, we know that ${d}_{{x}}^\Pi = \frac{g(\epsilon) - x}{\epsilon}$ for $\epsilon > 0$ sufficiently small. Hence, the assumption that ${d}_{{x}}^\Pi = {0}$ implies that $g(\epsilon) = x$. Using the first-order optimality of $g(\epsilon)$ we have $\innerprod{x - \epsilon \nabla h(x) - g(\epsilon)}{{z} -  g(\epsilon)} \leq 0 \quad \forall {z} \in \mathcal{P}.$
However, since $g(\epsilon) = x$, this becomes $\innerprod{- \epsilon \nabla h(x)}{{z} -  x} \leq 0 \quad \forall {z} \in \mathcal{P}$. This is equivalent to saying $-\nabla h(x) \in N_P(x)$, so that $x = x^*$.

Conversely suppose that $x = x^*$. Then, it follows that $-\nabla h(x) \in N_P(x)$. Using Lemma \ref{basecase}, this implies that $g(\lambda) =x$ for all $\lambda > 0$. Since ${d}_{{x}}^\Pi = \frac{g(\epsilon) - x}{\epsilon}$ for $\epsilon > 0$ sufficiently small, it follows that ${d}_{{x}}^\Pi = {0}$. Thus, $\|{d}_{{x}}^\Pi\| = 0$ as claimed.

Now assume that $h$ is $\mu$ strongly convex. Then, using the strong convexity inequality applied with $y \leftarrow x + \gamma (x^*-x)$ and $x \leftarrow x$ we obtain 
    \begin{align*}
        h(x + \gamma(x^*-x)) - h(x) &\geq \gamma \innerprod {\nabla h(x_{t})}{x^*-x}  + \frac{\mu\gamma^2 \|x^*-x\|^2 }{2}\\
        &= -\frac{\innerprod {-\nabla h(x_{t})}{x^*-x}^2} {2\mu \|x^*-x\|^2},
    \end{align*}
    where the second inequality is obtained by minimizing over $\gamma$. As the RHS is independent of $\gamma$, we can set $\gamma = 1$ to get 
    \begin{equation}\label{sc primal gap}
        h(x) = h(x) - h(x^*) \leq \frac{\innerprod {-\nabla h(x_{t})}{x^*-x}^2} {2\mu \|x^*-x\|^2}
    \end{equation}
    Now, applying Lemma \ref{best local direction} with $y = x^* - x$, completes the proof:
    $$    \innerprod{\nabla h(x)}{\frac{{d}_{x}^\Pi}{\|{d}_{x}^\Pi\|}}^2  =\|{d}_{x}^\Pi \|^2 \geq \innerprod {-\nabla h(x)}{\frac{x^* - x}{ \|x^* - x\|}}^2 \geq 2 \mu (h(x) - h(x^*)),$$
where the last inequality follows from \eqref{sc primal gap}.
  \hfill $\square$

\subsection{Connecting Shadow-steps to Away-steps} \label{sec: shadow-away}
\awayshadow*

We first recall this result from Moondra et al. \cite{moondra2021reusing}:

\begin{lemma} \label{best away vertex}
Let $\mathcal{P} = \{x \in \mathbb{R}^n: Ax \leq b\}$ be a polytope with vertex set $\mathcal{V}(\mathcal{P})$. Consider any ${x} \in \mathcal{P}$. Let $I$ denote the index-set of active constraints at $x$ and $F = \{ x \in \mathcal{P} \mid A_{I} x =  b_I\}$ be the minimal face containing $x$. Let $\mathcal{A}(x) := \{S : S \subseteq \mathcal{V}(\mathcal{P}) \mid  \text{$x$ is a proper convex combination of all the elements in $S$}\}$ be the set of all possible active sets for $x$, and define $\mathcal{A} := \cup_{A \in \mathcal{A}(x)}A$. Then, we claim that $\mathcal{A} = \mathcal{V}(F)$.
\end{lemma}

\paragraph{Proof of Lemma \ref{lemma away-shadow}.} First, if $\delta_{\max} = 0$, then ${a}_x = x$, and the result holds trivially. Now assume that $\delta_{\max} > 0$. By definition of ${d}_{{x}}^\Pi$, we know that $A_{I(x)} {d}_{{x}}^\Pi \leq {0}$. Hence, since $-{d}_{x}^\Pi$ is also feasible, it follows that we must have $A_{I(x)}{d}_{{x}}^\Pi  = 0$. This then implies that $A_{I(x)} {a}_x = A_{I(x)}({x} - \delta_{\max} {d}_{{x}}^\Pi) = A_{I(x)}{x} =  {b}_{I(x)}$. Thus, we have ${a}_{x} \in F$. Moreover, in the proof of the previous lemma (Lemma \ref{best away vertex}), we show that the vertices of $F$ in fact form all possible away-steps. The result then follows. \hfill $\square$

\section{Missing Proofs for Section \ref{sec:appODE} }\label{sec:appendixODE}
\subsection{Proof of Theorem \ref{ode theorem}} \label{sec: app ode theorem}

\odetheorem*
\noindent \textit{Proof.}
Consider the dynamics given in \eqref{MD ODE}. Using the chain rule we know that 
\begin{equation*}
    \dot{X} (t) = \frac{d}{dt} \nabla \phi^* (Z(t)) = \innerprod{\nabla^2 \phi^* (Z(t))}{\dot{Z} (t)}  = \innerprod{\nabla^2 \phi^* (Z(t))}{-\nabla h(X(t))}.
\end{equation*}
By definition, the directional derivative of $\nabla \phi^*$ with respect to the direction $-\nabla h(X(t))$ is given by
\begin{equation*}
   \nabla_{-\nabla h(X(t))}^2 \phi(Z(t)) := \lim_{\epsilon \downarrow 0}  \frac{\nabla \phi^* (Z(t) - \epsilon \nabla h(X(t))) - \nabla \phi^* (Z(t))}{\epsilon} = \innerprod{\nabla^2 \phi^* (Z(t))}{-\nabla h(X(t))},
\end{equation*}
(see for example \cite{rockafellar_1970}). Hence, using this fact and the ODE definition in \eqref{MD ODE} we have
\begin{align*}
    \dot{X} (t)  &= \innerprod{\nabla^2 \phi^* (Z(t))}{-\nabla h(X(t))}= \lim_{\epsilon \downarrow 0}  \frac{\nabla \phi^* (Z(t) - \epsilon \nabla h(X(t))) - \nabla \phi^* (Z(t))}{\epsilon} \\
    &= \lim_{\epsilon \downarrow 0}  \frac{\nabla \phi^* (Z(t) - \epsilon \nabla h(X(t))) - X(t)}{\epsilon} && 
\end{align*}
Since $\phi$ is strongly convex, it is known that $\nabla \phi = (\nabla \phi^*)^{-1}$ (in particular, from the duality of $\phi$ and $\phi^*$ we know that $x = \nabla \phi^* (\tilde{x})$ if any only $\tilde{x} = \nabla \phi(x)$; see Theorem 23.5 in \cite{rockafellar_1970}). Moreover, by definition of the mirror descent ODE given in $\eqref{MD ODE}$, we have $X(t) = \nabla \phi^*(Z(t))$. Using these facts we get $Z(t) =  (\nabla \phi^*)^{-1}(X(t)) = \nabla \phi (X(t))$. Thus,
\begin{equation*}
       \dot{X} (t) = \lim_{\epsilon \downarrow 0}  \frac{\nabla \phi^* (\nabla \phi(X(t)) - \epsilon \nabla h(X(t))) - X(t)}{\epsilon} = {d}_{X(t)}^\phi
\end{equation*}
which coincides with dynamics for moving in the shadow of the gradient given in \eqref{ODe for shadow}.
\hfill $\square$

\subsection{Proof of Theorem \ref{ode_conver}} \label{proof of ode conver theorem}

\odeconver*

\noindent \textit{Proof.}
    Define $w(X(t))\coloneqq h(X(t)) - h({x}^*)$. First, the fact that $X(t) \in \mathcal{P}$ for all $t \geq 0$ is guaranteed by the equivalence between the dynamics of PGD \eqref{MD ODE} and shadow dynamics asserted in Theorem \ref{ode_conver}, which by construction satisfy $X(t) \in \mathcal{P}$ for all $t \geq 0$. Now the proof for the convergence rate uses a Lyapunov argument, where we let $h(X(t))$ be our Lyapunov potential function. Using the chain rule we have 
\begin{align}
    \frac{d w(X(t))}{dt} &= \innerprod{\nabla h(X(t))}{\dot{X}(t)}\\
    &= \innerprod{\nabla h(X(t))}{{d}_{X(t)}^\Pi} \label{ode1}\\
     &= -\|{d}_{X(t)}^\Pi\|^2 \label{ode2}\\
     &\leq -2 \mu w(X(t)), \label{ode3}
\end{align}
where we used the fact that $\dot{X}(t) = {d}_{X(t)}^\Pi$ in \eqref{ode1}, the fact that $\innerprod{-\nabla h(X(t))}{d_{X(t)}^\Pi} = \|d_{X(t)}^\Pi \|^2$ in \eqref{ode2}, and finally the primal gap estimate \eqref{deriv_performance_sc} in \eqref{ode3}. Integrating both sides of the above inequality (and using Gr\"{o}nwall's inequality \cite{gronwall1919note}) yields the result
\hfill $\square$

\subsection{Proof of Theorem \ref{alg converg1}} \label{sec:app proof of theorem alg convg 1}

\shadowwalkconver*

\noindent \textit{Proof.}
Since $\iter{x}{t+1} := \textsc{Trace}(\iter{x}{t}, \nabla h(\iter{x}{t}))$ and $\textsc{Trace}(\iter{x}{t}, \nabla h(\iter{x}{t}))$ traces $g(\lambda) =  \Pi_P(\iter{x}{t} - \lambda \nabla h(\iter{x}{t}))$ until we hit the $1/L$ step size with exact line-search, it follows that $h(\iter{x}{t+1}) \leq h(g(1/L))$, and we are thus guaranteed to make at least as much progress per iteration as that of PGD step with a fixed-step size of $1/L$. Hence we get the same standard rate $(1 - \frac{\mu} {L})$ of decrease as PGD with fixed step size $1/L$ \cite{Karimi_2016}. Moreover, the iteration complexity of the number of oracle calls stated in the theorem now follows using the above rate of decrease in the primal gap.
\hfill $\square$     .

\section*{Acknowledgments.}
The research presented in this paper was partially supported by the Georgia Institute of Technology ARC TRIAD fellowship and NSF grant CRII-1850182.

\singlespacing
\bibliographystyle{IEEEtran}
\bibliography{related}
\newpage

\end{document}